\setlist[enumerate]{label=\rm{(\arabic*)}, ref=(\arabic*)}
\DeclareMathOperator{\Aut}{Aut}
\DeclareMathOperator{\Cay}{Cay}
\DeclareMathOperator{\DCay}{\overrightarrow{Cay}}
\DeclareMathOperator{\LSym}{Sym_L}
\DeclareMathOperator{\RSym}{Sym_R}
\DeclareMathOperator{\Comm}{Comm}
\newcommand{\N}{\mathbb{N}}
\newcommand{\Z}{\mathbb{Z}}
\newcommand*{\M}{\mathcal{M}}
\newcommand*{\eG}{\reflectbox{$G$}}
\newcommand*{\C}[2][X]{\mathcal{C}_{1}^{#1}(#2)}
\newtheorem{thm}{Theorem}[section]
\newtheorem{lemma}[thm]{Lemma}
\newtheorem{prop}[thm]{Proposition}
\newtheorem{cor}[thm]{Corollary}
\theoremstyle{definition}
\newtheorem{qu}[thm]{Question}
\newtheorem{defn}[thm]{Definition}
\title[On bireversible automata and commensurators of groups]{On bireversible automata and commensurators of groups in automorphisms of their Cayley graphs}
\author{Dominik Francoeur}
\begin{document}

\begin{abstract}
If $G$ is a finitely generated group and $X$ is a Cayley graph of $G$, denote by $\mathcal{C}_1^X(G)$ the subgroup of all automorphisms of $X$ commensurating $G$ and fixing the vertex corresponding to the identity.
Building on the work of Macedo\'{n}ska, Nekrashevych and Sushchansky, we observe that $\mathcal{C}_1^X(G)$ can be expressed as a directed union of groups generated by bireversible automata.
We use this to show that every cyclic subgroup of $\mathcal{C}_1^X(G)$ is undistorted and to obtain a necessary condition on $G$ for $\mathcal{C}_1^X(G)$ not to be locally finite.
As a consequence, we prove that several families of groups cannot be generated by bireversible automata and show that the set of groups generated by bireversible automata is strictly contained in the set of groups generated by invertible and reversible automata.
\end{abstract}

\maketitle

\section{Introduction}

\renewcommand{\thethm}{\Alph{thm}}

Let $G$ be a finitely generated group, let $X$ be a Cayley graph of $G$ with respect to some finite generating set, and let $\Aut(X)$ denote the group of automorphisms of $X$ seen as an unlabelled graph.
The group $\Aut(X)$ can be equipped with a natural topology that makes it a totally disconnected locally compact group, and through its action on itself by left multiplication, $G$ naturally embeds as a uniform lattice in $\Aut(X)$.

The group $\Aut(X)$ can depend significantly on the chosen generating set.
Indeed, by a theorem of Leemann and de la Salle \cite{delaSalleLeemann22a,delaSalleLeemann22}, for any finitely generated infinite group $G$, there always exists a finite generating set $S$ such that $G$ is of index at most $2$ in $\Aut(\Cay(G,S))$, while for some other generating sets the group of automorphisms could of course be much larger.
Nevertheless, the group $G$ can impose some restrictions on the properties of $\Aut(X)$ regardless of the choice of generating set.
For instance, it was shown by Trofimov \cite{Trofimov84} that if $G$ is a virtually nilpotent group, then $\Aut(X)$ is always compact-by-discrete.
It thus makes sense to study how the properties of a group $G$ can impact the possible properties of the group $\Aut(X)$.

Instead of the whole of $\Aut(X)$, one can consider the subgroup $\Comm_{\Aut(X)}(G)$ of elements of $\Aut(X)$ that commensurate the lattice $G$ (i.e. elements $\gamma\in \Aut(X)$ such that $\gamma G\gamma^{-1}\cap G$ is of finite index in both $G$ and $\gamma G \gamma^{-1}$, see Definition \ref{defn:Commensurator}).
In the case of a free group $F$ and a tree $T$, $F$ is a uniform tree lattice and $\Comm_{\Aut(T)}(F)$ is a dense subgroup of $\Aut(T)$ by a Theorem of Bass and Kulkarni \cite{BassKulkarni90}.
For general $G$ and $X$, this needs no longer hold, but $\Comm_{\Aut(X)}(G)$ is still an interesting subgroup of $\Aut(X)$ in its own right.

It follows from the fact that the action of $G$ on $X$ is free and transitive on the vertices of $X$ that $\Aut(X) = G\Aut_1(X)$, where $\Aut_1(X)$ denotes the elements of $\Aut(X)$ fixing the vertex corresponding to the identity in $G$.
Furthermore, $G\leq \Comm_{\Aut(X)}(G)$, so that $\Comm_{\Aut(X)}(G) = G \C{G}$, where $\C{G} = \Comm_{\Aut(X)}(G)\cap \Aut_1(X)$.
Thus, $\Comm_{\Aut(X)}(G)$ is discrete in $\Aut(X)$ if and only if $\C{G}$ is finite.

In the case of a free group $F$ and a tree $T$, it was shown by Macedo\'{n}ska, Nekrashevych and Sushchansky \cite{MacedonskaNekrashevychSushchansky00} that $\C[T]{F}$ corresponds to the set of all so-called \emph{bireversible} automorphisms, which are automorphisms whose action on the tree $T$ can be described by a bireversible Mealy automaton.
A Mealy automaton is a deterministic finite-state transducer, that is, a machine with a finite number of states reading as input a letter in some finite alphabet, returning as output a letter in the same alphabet and changing its current state depending on both the letter read and the currently active state (see Definition \ref{defn:MealyAutomaton} for a more precise definition).
There are two basic operations that one can perform on a Mealy automaton, namely taking the inverse and taking the dual, that will result in automata that are not necessarily Mealy automata.
If the inverse of a Mealy automaton is again a Mealy automaton, then it is said to be \emph{invertible}, and if the inverse of the dual of a Mealy automaton is again a Mealy automaton, then it is said to be \emph{reversible}.
If all possible combinations of iteratively taking inverses or duals of a Mealy automaton always yields a Mealy automaton, then that automaton is said to be \emph{bireversible}.
Note in particular that a bireversible Mealy automaton is both invertible and reversible, but that there are invertible and reversible Mealy automata that are not necessarily bireversible.

To every invertible Mealy automaton, one can associate a group (see Definition \ref{defn:AutomatonGroup}).
Currently, however, we only know very little about which groups can be generated by Mealy automata and how the properties of the automaton influence the properties of the group.
It turns out that even small automata can generate fairly complicated groups.
For instance, it was shown by Grigorchuk that using a Mealy automaton with only five states on an alphabet of two letters, one can generate an infinite finitely generated periodic group \cite{Grigorchuk80}.
Later, he also showed that the same group is in fact of intermediate word growth \cite{Grigorchuk83}, thus providing the first example of such a group.
Grigorchuk's striking results have brought forth a surge of interest in groups generated by Mealy automata that continues unabated to this day.

Since Grigorchuk's example was generated by an invertible but not reversible automaton, many of the subsequent works have focused on this class, and groups generated by bireversible Mealy automata (henceforth, we shall often drop the word Mealy and call them simply bireversible automata) have received comparatively less attention.
Nevertheless, in light of Macedo\'{n}ska, Nekrashevych and Sushchansky's result, it would be useful to have a better understanding of these groups.
It is known that finitely generated free groups \cite{SteinbergVorobetsVorobets11} and groups of the form $A\wr \Z$ for $A$ a non-trivial finite abelian group \cite{Francoeur23} can be generated by bireversible automata.
Thanks to the work of Glasner and Mozes \cite{GlasnerMozes05}, who discovered a connection between bireversible automata and non-positively curved square complexes, we also have examples of groups with Kazhdan's property (T) generated by bireversible automata.
On the other hand, we know from the work of Klimann \cite{Klimann18} that a virtually nilpotent group generated by a bireversible automaton must be finite, and by the work of the author and Mitrofanov \cite{FrancoeurMitrofanov21}, a group generated by such an automaton must either be periodic or contain a free subsemigroup on two generators.
Beyond this, however, we currently know very little.

The aim of the current article is to leverage the connection between groups generated by bireversible automata, which we call here bireversible groups, and commensurators in automorphisms of Cayley graphs to learn more about both.
To this end, we first note that Macedo\'{n}ska, Nekrashevych and Sushchansky's result can easily be adapted to all finitely generated groups $G$ and Cayley graphs $X$ (Theorem \ref{thm:GeneralisedMNS}), and we use this to show that any group $G$ for which $\C{G}$ is not locally finite must map onto an infinite bireversible group.
More precisely, we show the following theorem.

\begin{thm}[Theorem \ref{thm:GroupsWithBigCommensurators}]\label{thm:GroupsWithBigCommensuratorsIntro}
Let $G$ be a finitely generated group and let $X$ be a Cayley graph of $G$ such that
\[\C{G}= \Comm_{\Aut(X)}(G) \cap \Aut_1(X)\]
is not locally finite.
Let $D_G$ be the largest subgroup of $G$ that is normalised by $\Comm_{\Aut(X)}(G)$.
Then, $G/D_G$ is residually infinite bireversible (i.e. for every $g\in G$ with $g\notin D_G$, there exists $D_G\leq N\trianglelefteq G$ such that $g\notin N$ and $G/N$ is an infinite bireversible group).
\end{thm}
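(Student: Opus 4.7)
The plan is, given $g \in G\setminus D_G$, to construct a normal subgroup $N\trianglelefteq G$ with $D_G\leq N$, $g\notin N$, and $G/N$ infinite bireversible. First, since $D_G$ is the largest subgroup of $G$ normalised by $\Comm_{\Aut(X)}(G) = G\cdot\C{G}$ and $G$ trivially normalises itself by conjugation, there must be some $\gamma\in\C{G}$ with $\gamma g\gamma^{-1}\notin G$. Because $\C{G}$ is not locally finite, Theorem~\ref{thm:GeneralisedMNS}, which expresses $\C{G}$ as a directed union of groups generated by bireversible automata, lets me enlarge $\gamma$ to a finitely generated \emph{infinite} subgroup $H\leq\C{G}$ that is itself generated by a bireversible automaton.

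With $H$ fixed, I would set $\Gamma = \langle G, H\rangle\leq\Aut(X)$ and take $N$ to be the largest subgroup of $G$ normal in $\Gamma$, namely $N = \bigcap_{\delta\in\Gamma}\delta G\delta^{-1}$. By construction $N\trianglelefteq G$. Since $\Gamma\leq\Comm_{\Aut(X)}(G)$ and $D_G$ is normalised by the latter, $D_G\leq N$. The choice $\gamma\in\Gamma$ with $\gamma g\gamma^{-1}\notin G$ immediately gives $g\notin N$.

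To see $G/N$ is infinite, I would argue by contradiction: if $[G:N]<\infty$, then $X/N$ is a finite graph, so $\Aut(X/N)$ is a finite group, so its subgroup $\Gamma/N$ is finite; but $H\cap N\subseteq H\cap G = \{1\}$ (since $H$ fixes the base vertex while $G$ acts freely), so $H$ injects into $\Gamma/N$, contradicting that $H$ is infinite. The crux is then to show $G/N$ is generated by a bireversible automaton. The induced action of $\Gamma/N$ on the quotient Cayley graph $X/N$ has $G/N$ as a uniform lattice and the image of $H$ as a finitely generated subgroup of $\C[X/N]{G/N}$ still generated by a bireversible automaton (applying Theorem~\ref{thm:GeneralisedMNS} to $G/N$, $X/N$). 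The plan is to combine the automaton generating $H$ with the translation action of the generators of $G/N$ into a single bireversible automaton on a common alphabet, and then to isolate a subset of states that is closed under self-similar restriction and generates $G/N$.

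The hardest step is verifying that such a combined automaton is genuinely bireversible, that is, that its dual, obtained by exchanging the roles of states and letters, is again a Mealy automaton with invertible transitions. This is where the interaction between $H$ and $G/N$ must be exploited, using both the bireversibility of the automaton for $H$ on the alphabet coming from $X$ and the $\Gamma$-invariance of $N$, which forces the descended action on $X/N$ to respect the automaton structure lifted from $X$. I expect this verification, together with checking that the restrictions of the translation generators of $G/N$ stay in a finite state set, to be where most of the technical work of the theorem is concentrated.
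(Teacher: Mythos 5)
Your reduction steps are sound and in fact parallel the paper's architecture: the existence of $\gamma\in\C{G}$ with $\gamma g\gamma^{-1}\notin G$ is correct (the subgroup generated by all $\Comm_{\Aut(X)}(G)$-conjugates of $g$ is commensurator-normalised, so if every single conjugate stayed in $G$ then $g\in D_G$), enlarging $\gamma$ to an infinite finitely generated $H\leq\C{G}$ of the form $\zeta_\varphi(\psi_\M(G_\M))$ for a single bireversible automaton $\M=(A,Q,\lambda,\rho)$ compatible with $(G,A,\varphi)$ is exactly what the paper does via Proposition \ref{prop:UnionOfAutomata}, and your infiniteness argument via deck transformations of the finite quotient graph is essentially the paper's proof of Corollary \ref{cor:GroupFiniteIFFDualFinite}. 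But the heart of the theorem --- that $G/N$ is a bireversible group --- is precisely the step you leave as a ``plan,'' and the plan points in the wrong direction. No new ``combined automaton'' mixing the states of $\M$ with translation generators of $G/N$ needs to be built, and it is unclear your construction could be carried out or shown bireversible. The mechanism in the paper is \emph{duality}: compatibility of $\M$ with $(G,A,\varphi)$ means exactly that $\ker(\varphi)$ is contained in $K=N_A$, the largest normal subgroup of $\pi_1(\M)$ contained in $F_A$, i.e.\ in the kernel of the dual group. Taking $N=K/\ker(\varphi)$ gives $G/N\cong F_A/K=\eG_\M$, which is bireversible by Proposition \ref{prop:DualIsBireversible} --- the automaton you are trying to construct \emph{is} the dual automaton $\overline{\M}$, whose state set is the alphabet $A=S\cup S^{-1}$ (already encoding the generators of $G$) and whose alphabet is $Q$ --- and infinite by Corollary \ref{cor:GroupFiniteIFFDualFinite}, since $G_\M$ is infinite. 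Your suggestion to apply Theorem \ref{thm:GeneralisedMNS} to $(G/N, X/N)$ conflates two different statements: that theorem realises subgroups of the \emph{commensurator} as automaton groups, whereas what is needed is that the \emph{base group} $G/N$ is one, and that is a statement about dual groups, not about $\C[X/N]{G/N}$.

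A second, related problem is your choice $N=\bigcap_{\delta\in\Gamma}\delta G\delta^{-1}$. With the core, $D_G\leq N$, $g\notin N$, $N\trianglelefteq G$ and infiniteness all go through, but bireversibility of $G/N$ is then not established: the quotient that is manifestly bireversible is $G/(K/\ker(\varphi))\cong\eG_\M$, and a priori your core only \emph{contains} $K/\ker(\varphi)$; identifying the two would be an additional argument you never supply. The trade-off is resolved by taking $N=K/\ker(\varphi)$ outright: then $g\notin N$ still holds (since $K$ is normal in $\pi_1(\M)$, $N$ is normalised by $\gamma$, so $g\in N$ would force $\gamma g\gamma^{-1}\in N\leq G$), and $D_G\leq N$ follows as in the paper's final paragraph by pulling $D_G$ back to a subgroup of $F_A$ normalised both by $F_A$ and by $\psi_\M(G_\M)$, hence contained in $K$. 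In short: your scaffolding matches the paper, but the single key idea --- that the relevant quotients of $G$ are the \emph{dual} groups of the automata whose automaton groups exhaust $\C{G}$ --- is missing, and without it the proof does not close.
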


As a corollary, we find that if $G$ is such that every proper quotient is virtually nilpotent (in particular, if $G$ is a just infinite group), then $\C{G}$ has to be locally finite, unless $G$ is bireversible.

\begin{cor}[Corollary \ref{cor:JustInfiniteIsBirOrSmall}]\label{cor:JustInfiniteIsBirOrSmallIntro}
If $G$ is an infinite finitely generated group such that every proper quotient is virtually nilpotent, then either $G$ is a bireversible group or 
\[\C{G}= \Comm_{\Aut(X)}(G) \cap \Aut_1(X)\]
is locally finite for all Cayley graphs $X$ of $G$.
\end{cor}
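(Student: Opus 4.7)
The plan is to apply Theorem~\ref{thm:GroupsWithBigCommensuratorsIntro} and analyse the three possibilities for the characteristic subgroup $D_G$. Assuming for contradiction that there is a Cayley graph $X = \Cay(G,S)$ with $\C{G}$ not locally finite and that $G$ is not bireversible, I would first invoke the theorem to deduce that $G/D_G$ is residually infinite bireversible, where $D_G$ is the largest subgroup of $G$ normalised by $\Comm_{\Aut(X)}(G)$. Since $G$ is contained in $\Comm_{\Aut(X)}(G)$, the subgroup $D_G$ is normal in $G$, and I would split the argument according to whether $D_G$ equals $\{1\}$, equals $G$, or is a proper non-trivial subgroup.

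The two subcases with $D_G \neq G$ should be dispatched using Klimann's theorem that a virtually nilpotent bireversible group is finite. In the subcase $D_G = \{1\}$, any non-trivial element of $G$ (which exists because $G$ is infinite) produces via residual infinite bireversibility a normal subgroup $N \trianglelefteq G$ with $G/N$ infinite bireversible; if $N \neq \{1\}$ then $G/N$ would be a proper quotient, hence virtually nilpotent by hypothesis, and simultaneously infinite bireversible, contradicting Klimann's result. Hence $N = \{1\}$ and $G$ itself is bireversible, contrary to assumption. In the subcase $\{1\} \neq D_G \neq G$, the group $G/D_G$ is a non-trivial proper quotient of $G$, hence virtually nilpotent by hypothesis, while residual infinite bireversibility supplies an infinite bireversible quotient $G/N$ with $D_G \leq N$; as a further quotient of a virtually nilpotent group, $G/N$ is also virtually nilpotent, again contradicting Klimann's theorem.

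The main obstacle is the remaining case $D_G = G$, where Theorem~\ref{thm:GroupsWithBigCommensuratorsIntro} yields no direct information since its conclusion is vacuous. To handle it I would argue directly: if $\Comm_{\Aut(X)}(G)$ normalises $G$ in $\Aut(X)$, conjugation gives a homomorphism $\C{G} \to \Aut(G)$. Any $\gamma \in \C{G}$ centralising $G$ fixes every vertex, since $\gamma(g \cdot v) = g \cdot \gamma(v) = g \cdot v$ for all $g \in G$, where $v$ denotes the identity vertex; so the homomorphism is injective. Because $\gamma$ preserves the edges incident to $v$, the induced automorphism of $G$ permutes the finite generating set $S$ setwise, and being determined by its restriction to $S$ it embeds into $\Sym(S)$. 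Hence $\C{G}$ is finite, contradicting the standing assumption. The rigidity observation that automorphisms of a finitely generated group fixing a finite generating set setwise form a finite group, rather than Theorem~\ref{thm:GroupsWithBigCommensuratorsIntro}, is the key input for this last case.
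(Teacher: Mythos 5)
Your proof is correct, and it differs from the paper's in an instructive way. The paper's own proof is two lines: it invokes the body version of the theorem (Theorem \ref{thm:GroupsWithBigCommensurators}), whose conclusion is strictly stronger than the residual statement you used --- it asserts outright a descending chain $N_1\trianglerighteq N_2\trianglerighteq\cdots$ of normal subgroups with \emph{every} $G/N_i$ infinite bireversible and $\bigcap_i N_i = D_G$; in particular it hands you a single normal subgroup $N$ with $G/N$ infinite bireversible, and it implicitly rules out $D_G=G$ under the non-local-finiteness hypothesis (since $D_G\leq N_1\subsetneq G$). Klimann's theorem \cite{Klimann18} then shows $G/N$ cannot be a proper quotient, so $N=\{1\}$ and $G$ is bireversible. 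You worked only from the introduction's formulation (Theorem \ref{thm:GroupsWithBigCommensuratorsIntro}), which really is vacuous when $D_G=G$, so at that level of information your case split is a genuine necessity, and your patch for the case $D_G=G$ --- conjugation gives an injective homomorphism $\C{G}\rightarrow \Aut(G)$ whose image is determined by a permutation of the neighbours of the identity vertex, whence $\C{G}$ is finite --- is correct and is an observation the paper never needs to make. Two small touch-ups there: the neighbour set of the identity in the unlabelled Cayley graph is $S\cup S^{-1}$, so the embedding is into $\Sym(S\cup S^{-1})$ rather than $\Sym(S)$; and the step ``fixes every vertex, hence is the identity automorphism'' uses that a Cayley graph has no parallel edges (true here, but it would fail for the multi-edged graphs $\ker(\varphi)\backslash\overrightarrow{T}_A$ appearing elsewhere in the paper, so it is worth flagging). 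Finally, your first two cases can be merged: for any $N\supseteq D_G$ with $G/N$ infinite bireversible, the hypothesis plus Klimann force $N=\{1\}$, which simultaneously yields $D_G=\{1\}$ and the bireversibility of $G$.
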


From the same considerations about commensurators of bireversible groups, we also observe that an infinite bireversible group $G$ always acts geometrically on a graph $X$ such that the commensurator of $G$ in $\Aut(X)$ has a finitely generated subgroup that is not discrete, which implies that $\Aut(X)$ is not compact-by-discrete.
This simple observation, when combined with existing results in the literature, allows us to prove that several groups cannot be bireversible.

\begin{prop}[Corollary \ref{cor:ListOfGroups}]\label{prop:ListOfGroupsIntro}
The following groups are not bireversible:
\begin{enumerate}
\item irreducible lattices in center-free, real semi-simple Lie groups without compact factors and not locally isomorphic to $\mathrm{SL}_2(\mathbb{R})$,
\item uniform lattices in $\mathrm{PSL}_2(\mathbb{R})$,
\item the group $\mathrm{Out}(F_n)$ of outer automorphisms of a free group of rank $n\geq 3$,
\item topologically rigid hyperbolic groups,
\item hyperbolic groups whose visual boundary is homeomorphic to an $n$-sphere with $n\leq 3$ or to a Sierpinski carpet,
\item fundamental groups of closed irreducible oriented 3-manifolds with non-trivial geometric decomposition.
\end{enumerate}
\end{prop}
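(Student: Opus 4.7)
The plan is to apply the observation stated immediately before Proposition~\ref{prop:ListOfGroupsIntro}: an infinite bireversible group $G$ always acts geometrically on a graph $X$ for which $\Aut(X)$ fails to be compact-by-discrete. Contrapositively, an infinite group $G$ cannot be bireversible as soon as $\Aut(X)$ is compact-by-discrete for every Cayley graph $X$ of $G$ with respect to a finite generating set. Each of the six entries on the list consists of infinite groups, so the task reduces, family by family, to a uniform rigidity statement for the full automorphism group of every Cayley graph.

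For each item I would then match the desired compact-by-discrete conclusion with a known rigidity theorem. For (1), Margulis' arithmeticity theorem together with Mostow--Prasad rigidity (which excludes the $\mathrm{SL}_2(\mathbb{R})$ case) forces the closure of $G$ in $\Aut(X)$ to embed, modulo a compact open kernel, in the isometry group of the ambient symmetric space, from which compact-by-discreteness is immediate. For (2), the analogous fact for uniform Fuchsian groups is classical and follows from their action on the hyperbolic plane. For (3), the quasi-isometric rigidity of $\Out(F_n)$ for $n\ge 3$ due to Farb--Handel, with subsequent refinements of Bridson--Vogtmann, supplies what is needed. For (4)--(6), one invokes the well-developed machinery of quasi-isometric rigidity for hyperbolic and $3$-manifold groups: Paulin and Bowditch for topologically rigid hyperbolic groups; Tukia, Gabai, Casson--Jungreis (and Bonk--Kleiner or Kapovich--Kleiner for the Sierpinski-carpet case) for hyperbolic groups with the listed boundary types; and the quasi-isometric rigidity theorem of Kapovich--Leeb together with Behrstock--Neumann for fundamental groups of closed irreducible $3$-manifolds with non-trivial geometric decomposition.

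The main obstacle is that most of these rigidity results are phrased in the language of quasi-isometries of $G$ or of a canonical geometric model, whereas what is required is the sharper structural statement that $\Aut(X)$ is compact-by-discrete in the permutation topology for \emph{every} Cayley graph $X$ of $G$. For each family one must therefore verify that the compact kernel arising in the cited rigidity result is actually open in $\Aut(X)$, so that the induced quotient is genuinely discrete. This translation is routine but must be checked case by case; once it is in place, the corollary follows at once from the contrapositive of the observation preceding it.
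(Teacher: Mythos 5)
Your overall strategy is the paper's: take the contrapositive of Theorem \ref{thm:BirNotGraphicallyDiscrete} and then invoke rigidity of each family. But there is a genuine gap in the step you dismiss as ``routine''. The property you need is what \cite{MargolisSherpherStarkWoodhouse23+} calls \emph{graphical discreteness} --- whenever $G$ acts geometrically on a connected locally finite graph $X$, the group $\Aut(X)$ is compact-by-discrete --- and this does \emph{not} follow from quasi-isometric rigidity of the class by a routine translation. Free groups are the decisive counterexample: they are QI-rigid in the strongest class sense (any group quasi-isometric to a free group is virtually free), yet they act geometrically on trees whose automorphism groups are enormously non-discrete, and they \emph{are} bireversible \cite{SteinbergVorobetsVorobets11}. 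So knowing that every group QI to $G$ lies in a rigid class, or even knowing Mostow-type rigidity of a model space, does not by itself control the locally compact group $\Aut(X)$ for an arbitrary graph $X$; one needs statements about geometric actions of general locally compact groups (no-small-subgroups and action-rigidity arguments), and producing these for the six families is precisely the main content of \cite{MargolisSherpherStarkWoodhouse23+}, not a case-by-case verification that a compact kernel ``is open''. The paper's proof of Corollary \ref{cor:ListOfGroups} accordingly consists of a single citation to that work, rather than an attempt to rederive it from Margulis, Farb--Handel, Tukia--Gabai--Casson--Jungreis, Kapovich--Leeb, etc.

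A second, smaller inaccuracy: you formulate the contrapositive as requiring $\Aut(X)$ compact-by-discrete for every \emph{Cayley graph} $X$ of $G$. The graph produced by Theorem \ref{thm:BirNotGraphicallyDiscrete} is of the form $\ker(\varphi)\backslash\overrightarrow{T}_A$ for the marking coming from the dual automaton; it carries a free vertex-transitive $G$-action but may have multiple edges, so it need not be a Cayley graph in the simple-graph sense, and passing to the underlying simple graph can change the automorphism group. The rigidity input must therefore cover all connected locally finite graphs admitting a geometric $G$-action --- which is exactly how graphical discreteness is defined in the cited reference --- and restricting your hypothesis to Cayley graphs leaves the deduction formally incomplete even if each family-by-family claim were established.
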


Lastly, we prove that cyclic subgroups of bireversible groups are undistorted.

\begin{thm}[Theorem \ref{thm:UndistortedCyclicSubgroups}]\label{thm:UndistortedCyclicSubgroupsIntro}
Every cyclic subgroup of a bireversible group is undistorted.
\end{thm}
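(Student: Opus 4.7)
The plan is to deduce cyclic undistortion in a bireversible group $H$ from the analogous property in the ambient commensurator $\C{G}$, exploiting the embedding furnished by the generalized Macedo\'{n}ska--Nekrashevych--Sushchansky theorem (Theorem~\ref{thm:GeneralisedMNS}). Given a bireversible automaton $\mathcal{A}$ generating $H$, that theorem realizes $H$ as a subgroup of $\C{G}$ for an appropriately chosen finitely generated group $G$ and Cayley graph $X$, and the directed-union decomposition $\C{G} = \bigcup_{r} H_{r}$ by bireversible subgroups $H_{r}$ places every generator of $H$ in some fixed $H_{r_{0}}$.

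Equip $\C{G}$ with the length function $\|\gamma\| := \min\{r : \gamma \in H_{r}\}$, which is subadditive thanks to the inclusion $H_{r} \cdot H_{s} \subseteq H_{r+s}$ built into the filtration. Since the generators of $H$ have $\|\cdot\|$-length at most $r_{0}$, this gives $\|g^{n}\| \leq r_{0}\, |g^{n}|_{H}$ for all $n$, and the theorem reduces to the statement that $\|\gamma^{n}\| \geq c n$ for every infinite-order $\gamma \in \C{G}$, with $c > 0$ depending on $\gamma$. I would aim to prove this sharper statement first, phrased directly inside the commensurator.

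To establish the linear lower bound on $\|\gamma^{n}\|$, I would argue by contradiction. Since elements of $\C{G}$ all fix the identity vertex of $X$, no translation-length argument applies directly; instead the key structural input is that an element of $H_{r}$ is uniquely determined by its commensurator data on the finite ball $B_{r}(1_{G}) \subseteq X$. If $\|\gamma^{n}\|$ were sublinear along a subsequence, then infinitely many distinct $\gamma^{n}$ would be determined by data living on balls of sub-linear radius, and a pigeonhole argument would force two distinct powers of $\gamma$ to coincide, contradicting infinite order. The main obstacle is to make this pigeonhole quantitative: the number of commensurator restrictions supported on a ball of radius $r$ may grow quickly with $r$, so one must use the rigidity imposed by bireversibility, notably that such a restriction must admit consistent lifts to an automaton action on the whole rooted tree, to cut this count down enough for the argument to succeed.
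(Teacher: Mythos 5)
Your central reduction fails at the first step. You write $\C{G}=\bigcup_{r}H_{r}$ as a directed union of bireversible \emph{subgroups} and set $\|\gamma\|=\min\{r:\gamma\in H_{r}\}$, then propose to prove $\|\gamma^{n}\|\geq cn$ for infinite-order $\gamma$. But each $H_{r}$ is a group: once $g\in H_{r_{0}}$, every power $g^{n}$ also lies in $H_{r_{0}}$, so $\|g^{n}\|\leq r_{0}$ for all $n$. Your ``sharper statement'' is therefore false for \emph{every} element of the union, and the inequality $\|g^{n}\|\leq r_{0}\,|g^{n}|_{H}$ is vacuous --- it gives no lower bound on $|g^{n}|_{H}$. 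If you instead reinterpret $\|\gamma\|$ as the minimal length of a word in automaton states representing $\gamma$ (so that subadditivity genuinely holds), then $\|\gamma^{n}\|\geq cn$ is essentially a restatement of the undistortion of $\langle\gamma\rangle$ with respect to the state generating set, so the reduction is circular rather than a simplification. The pigeonhole step is likewise unsupported, as you partially concede: it is not true in general that an element of a bireversible group is determined by its action on a ball of radius $\|\gamma\|$ independent of its word length, and even granting some determination radius, a collision argument needs the count of possible restrictions on a radius-$r$ ball to grow slowly enough that it is beaten by the number of powers available; with sublinear $\|\gamma^{n}\|$ against (at best) exponential counts, no coincidence of powers is forced. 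No mechanism specific to bireversibility is actually supplied to close this gap.

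The paper's proof runs on an entirely different engine, which your proposal does not contain. Using Propositions \ref{prop:DualIsBireversible} and \ref{prop:MonoidGenerates}, one arranges an automaton $\M=(A,Q,\lambda,\rho)$ with $G\cong\eG_{\M}$ and with the monoids $Q^{*}$, $A^{*}$ surjecting onto $G_{\M}$, $\eG_{\M}$. Given $g=p_{A}(v)$ of infinite order, one forms the language $L$ of all images $\Psi_{\M}(\gamma)(v^{n})$ with $\gamma\in Q^{*}$, and invokes the theorem extracted from Francoeur--Mitrofanov (Theorem \ref{thm:TechnicalFrancoeurMitrofanovThm}) to produce $x_{1},x_{2}\in L$ generating a free submonoid contained in $L$ on which $p_{A}$ is injective. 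Since the automaton acts by isometries fixing the basepoint, every length-$n$ product of the $x_{i}$ maps to a group element at distance exactly $|g^{k_{1}n}|_{S}$ from the identity; comparing the $2^{n}$ resulting distinct elements with the ball-growth bound $(2|S|)^{|g^{k_{1}n}|_{S}}$ yields $k_{1}n\leq C|g^{k_{1}n}|_{S}$, and Lemma \ref{lemma:EasierDistortionCheck} concludes. This exponential-orbit input --- a free submonoid inside the orbit language, supplying exponentially many witnesses at a controlled distance --- is precisely the quantitative rigidity your sketch hopes for but does not provide, and without it (or a substitute of comparable strength) the proposal cannot be completed.
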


We derive two interesting consequences from this.
The first is that the set of bireversible groups is strictly smaller than the set of groups generated by invertible and reversible automata.

\begin{cor}[Corollary \ref{cor:InvRevNotBir}]\label{cor:InvRevNotBirIntro}
Let $\mathbf{Bir}$ be the set of groups generated by bireversible automata and let $\mathbf{InvRev}$ be the set of groups generated by invertible and reversible automata.
Then, $\mathbf{Bir}\subsetneq \mathbf{InvRev}$.
\end{cor}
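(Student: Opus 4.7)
The containment $\mathbf{Bir}\subseteq\mathbf{InvRev}$ is immediate from the definitions: a bireversible Mealy automaton is by construction both invertible and reversible, so its associated group lies in $\mathbf{InvRev}$. The real content of the corollary is thus the strict inequality, and my plan is to exhibit a single group witnessing $\mathbf{InvRev}\setminus\mathbf{Bir}\neq\emptyset$.

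Theorem \ref{thm:UndistortedCyclicSubgroupsIntro} provides a clean obstruction to membership in $\mathbf{Bir}$: any bireversible group has all of its cyclic subgroups undistorted with respect to any finite generating set. Hence it suffices to find a group $G$ that is generated by an invertible and reversible Mealy automaton and that contains a distorted cyclic subgroup; then Theorem \ref{thm:UndistortedCyclicSubgroupsIntro} immediately forces $G\notin\mathbf{Bir}$ while $G\in\mathbf{InvRev}$ by assumption.

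The natural candidate class is provided by the solvable Baumslag--Solitar groups $BS(1,n)=\langle a,t\mid tat^{-1}=a^n\rangle$ with $n\geq 2$: the relation $t^kat^{-k}=a^{n^k}$ shows that $|a^{n^k}|\leq 2k+1$ in the ambient word metric, so $\langle a\rangle$ is exponentially distorted. If one of these (or any other finitely generated group known to admit a distorted cyclic subgroup and to lie in $\mathbf{InvRev}$) can be realised as the group of a suitable invertible and reversible Mealy automaton, the corollary follows in one line. Note that one cannot hope to use virtually nilpotent witnesses here, as Klimann's theorem already rules those out without any appeal to distortion; the interest of the present approach is precisely that it produces non-virtually-nilpotent examples.

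The main obstacle is therefore not the proof itself but the existence of such a witness: producing automata that are simultaneously invertible and reversible, and identifying the group they generate up to isomorphism, is a notoriously delicate task. The plan is thus to invoke a concrete construction from the existing literature on reversible automaton groups (or to build one explicitly on a small alphabet for $BS(1,n)$ or a similar solvable group), verify by inspection of its transition and output functions that both the automaton and its dual are invertible, and then appeal to Theorem \ref{thm:UndistortedCyclicSubgroupsIntro} to conclude that this group cannot belong to $\mathbf{Bir}$.
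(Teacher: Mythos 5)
Your proposal follows essentially the same route as the paper: the containment $\mathbf{Bir}\subseteq\mathbf{InvRev}$ is definitional, and the strict inequality is witnessed by the solvable Baumslag--Solitar groups $BS(1,m)$, $m\geq 2$, whose cyclic subgroup $\langle a\rangle$ is exponentially distorted, so Theorem \ref{thm:UndistortedCyclicSubgroups} excludes them from $\mathbf{Bir}$. The one step you flag as open---realising $BS(1,m)$ as the group of an invertible and reversible automaton---is exactly what the paper imports from the literature, namely Proposition 5 of \cite{BartholdiSunic06}, so your plan is completed by that single citation and no explicit construction is needed.
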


The second is that for any group $G$ and any Cayley graph $X$, cyclic subgroups of the commensurator $\C{G}$ are undistorted.

\begin{cor}[Corollary \ref{cor:UndistortedCyclicInCommensurator}]\label{cor:UndistortedCyclicInCommensuratorIntro}
Let $G$ be a finitely generated group and let $X$ be a Cayley graph of $G$.
Then, every cyclic subgroup of
\[\C{G} = \Comm_{\Aut(X)}(G)\cap \Aut_1(X)\]
is undistorted.
\end{cor}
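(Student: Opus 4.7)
The plan is to reduce this corollary to Theorem \ref{thm:UndistortedCyclicSubgroupsIntro} by invoking the structural description promised in the abstract, namely that $\C{G}$ is a directed union of finitely generated bireversible groups (this is Theorem \ref{thm:GeneralisedMNS}). Because the ambient group $\C{G}$ need not itself be finitely generated, the natural reading of the statement is that for every finitely generated subgroup $H\leq \C{G}$, every cyclic subgroup of $H$ is undistorted in $H$; this is the form I will establish.

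Fix such a subgroup $H$ with finite generating set $S_H$, and let $\gamma\in H$. Since $S_H$ is finite and $\C{G}$ is a directed union of bireversible groups, by directedness there is a single bireversible group $B$ with $S_H\subseteq B\subseteq \C{G}$, and in particular $H\leq B$. Fix a finite generating set $S_B$ of $B$. Applying Theorem \ref{thm:UndistortedCyclicSubgroupsIntro} to $B$ produces a constant $c>0$ with $|\gamma^n|_B\geq c|n|$ for all $n\in\Z$. To transfer this lower bound to $H$, set
\[
L = \max_{s\in S_H} |s|_B,
\]
which is finite because $S_H$ is finite and $S_H\subseteq B$. Any expression of $\gamma^n$ as a product of $|\gamma^n|_H$ letters of $S_H^{\pm1}$ can be rewritten, by replacing each factor with a minimal $S_B$-word representing it, into an expression of length at most $L\cdot |\gamma^n|_H$ in $S_B^{\pm1}$. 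Thus $|\gamma^n|_B\leq L\cdot |\gamma^n|_H$, which combined with the previous inequality gives $|\gamma^n|_H\geq (c/L)|n|$, proving that $\langle \gamma\rangle$ is undistorted in $H$.

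The only substantive input is Theorem \ref{thm:UndistortedCyclicSubgroupsIntro}; the extraction of a containing bireversible group is immediate from Theorem \ref{thm:GeneralisedMNS}, and the transfer of undistortion from an overgroup to a finitely generated subgroup is a standard general fact. I therefore do not foresee a serious obstacle in this argument once the two earlier theorems have been established.
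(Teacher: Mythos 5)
Your proposal is correct and follows essentially the same route as the paper: both reduce to Theorem \ref{thm:UndistortedCyclicSubgroups} by placing a finitely generated subgroup $H$ of $\C{G}$ containing the given element inside a single bireversible group, and then use the standard fact (which you spell out via the Lipschitz comparison of word metrics) that undistortion in an overgroup passes to an intermediate finitely generated subgroup. The only small attribution point is that the directedness you invoke is not literally part of Theorem \ref{thm:GeneralisedMNS} but rests on the disjoint-union construction of Proposition \ref{prop:UnionOfAutomata} as deployed in the proof of Theorem \ref{thm:GroupsWithBigCommensurators}, which is exactly what the paper cites at this step.
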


We would like to mention that we later learned that Pierre-Emmanuel Caprace and Adrien Le Boudec had already obtained this last result (Corollary \ref{cor:UndistortedCyclicInCommensuratorIntro}) by completely different methods, although it was never published.

\subsection*{Organisation of the article}

In Section \ref{sec:Preliminaries}, we collect the background definitions and results about Mealy automata and automaton groups that we will require for the rest of the article.
In Section \ref{sec:Commensurator}, we generalise the result of Macedo\'{n}ska, Nekrashevych and Sushchansky and prove Theorem \ref{thm:GroupsWithBigCommensuratorsIntro} and Corollary \ref{cor:JustInfiniteIsBirOrSmallIntro}.
In Section \ref{sec:GraphicalDiscreteness}, we observe that bireversible groups act geometrically on a graph whose automorphism group is not compact-by-discrete and prove Proposition \ref{prop:ListOfGroupsIntro}.
Lastly, in Section \ref{sec:Distortion}, we study the distortion of cyclic subgroups in bireversible groups and prove Theorem \ref{thm:UndistortedCyclicSubgroupsIntro}, Corollary \ref{cor:InvRevNotBirIntro} and Corollary \ref{cor:UndistortedCyclicInCommensuratorIntro}.

\subsection*{Acknowledgements}

The author would like to thank Adrien Le Boudec and Mikael de la Salle for useful discussions and comments.

\section{Preliminaries}\label{sec:Preliminaries}

\renewcommand{\thethm}{\arabic{section}.\arabic{thm}}

\subsection{Mealy automata and automaton groups}

We begin by the definition of a Mealy automaton.

\begin{defn}\label{defn:MealyAutomaton}
A Mealy automaton is a tuple $\M=(A,Q, \lambda, \rho)$, where $A$ and $Q$ are finite sets, called respectively the \emph{alphabet} and the \emph{state set}, and $\lambda\colon Q\times A \rightarrow A$, $\rho \colon Q\times A \rightarrow Q$ are maps called respectively the \emph{output map} and the \emph{transition map}.
We say that $\M$ is
\begin{enumerate}[label=(\roman*)]
\item \emph{invertible} if for all $q\in Q$, the map $\lambda_q\colon A \rightarrow A$, $a\mapsto \lambda(q,a)$ is a bijection,
\item \emph{reversible} if for all $a\in A$, the map $\rho_a\colon Q \rightarrow Q$, $q\mapsto \rho(q,a)$ is a bijection,
\item \emph{bireversible} if $\M$ is invertible, reversible, and if the map $\delta\colon Q\times A \rightarrow A\times Q$, $(q,a) \mapsto (\lambda(q,a), \rho(q,a))$ is a bijection.
\end{enumerate}
\end{defn}

Let $\M=(A,Q,\lambda,\rho)$ be a Mealy automaton, and let us denote by $Q^*$ and $A^*$ the free monoids on $Q$ and $A$, respectively.
For every $q\in Q$, the map $\lambda_q\colon A \rightarrow A$ can be extended to a map $\lambda_q\colon A^* \rightarrow A^*$ through the recursive formula $\lambda_q(aw) = \lambda_q(a)\lambda_{\rho(q,a)}(w)$ for all $a\in A$ and $w\in A^*$ (note that we use the same symbol both for the original map and its extension, but this should not cause confusion, since one is an extension of the other).
Similarly, for every $a\in A$, one can extend the map $\rho_a\colon Q \rightarrow Q$ to a map $\rho_q\colon Q^* \rightarrow Q^*$ by $\rho_a(vq) = \rho_{\lambda(q,a)}(v)\rho_a(q)$ for all $q\in Q$ and $v\in Q^*$.
Note that this map is more natural when the word $vq$ is read from right to left.

If $\M$ is invertible, then it is easy to see that the maps $\lambda_q\colon A^* \rightarrow A^*$ are bijections for all $q\in Q$.
Therefore, we have a natural homomorphism $f_Q\colon F_Q \rightarrow \LSym(A^*)$,
where $F_Q$ denotes the free group on $Q$ and $\LSym(A^*)$ denotes the group of bijections of $A^*$ with composition taken from right to left, so that it acts on the left on $A^*$.
Similarly, if $\M$ is reversible, we can define a homomorphism $f_A\colon F_A \rightarrow \RSym(Q^*)$, where $\RSym(Q^*)$ is the group of bijections of $Q^*$ with composition taken from left to right, so that the natural action is a right action.
This allows us to define the group and dual group generated by an automaton.

\begin{defn}\label{defn:AutomatonGroup}
Let $\M=(A,Q,\lambda, \rho)$ be a Mealy automaton.
If $\M$ is invertible, the group generated by $\M$ is the group $G_\M = F_Q/\ker(f_Q)$.
If $\M$ is invertible, the dual group generated by $\M$ is the group $\eG_\M = F_A/\ker(f_A)$.
\end{defn}

\subsection{Bireversible automaton groups}

For bireversible automata, there exists a different characterisation of both the group and the dual group.
To state it, we first define a group that we will call here the \emph{fundamental group} of a bireversible automaton.
We call it so because it is in fact the fundamental group of a directed VH-T-complex associated to the automaton, first defined by Glasner and Mozes in \cite{GlasnerMozes05}.

\begin{defn}
Let $\M=(A,Q,\lambda, \rho)$ be a bireversible automaton.
Its fundamental group is the group
\[\pi_1(\M) = \langle Q,A \mid qa = \lambda(q,a)\rho(q,a) \quad \forall q\in Q, a\in A \rangle.\]
\end{defn}

The following theorem, which is a combination of Theorems 3.5 and 3.6 in \cite{BondarenkoKivva22}, clarifies the structure of the fundamental group of a bireversible automaton and explains how to extract the group and dual group generated by the automaton from it.

\begin{thm}[cf. \protect{\cite[Theorems 3.5 and 3.6]{BondarenkoKivva22}}]\label{thm:AlternativeDefBireversibleGroup}
Let $\M=(A,Q,\lambda, \rho)$ be a bireversible automaton and let $\pi_1(\M)$ be its fundamental group.
Then,
\begin{enumerate}
\item the subgroups $\langle Q \rangle, \langle S \rangle \leq \pi_1(\M)$ are free, with free basis $Q$ and $S$, respectively,\label{item:TwoFreeGroups}
\item for every $p\in \pi_1(\M)$, there exist unique elements $g,h\in \langle Q \rangle$ and $v,w\in \langle A \rangle$ such that $gv = p = wh$,\label{item:UniqueProduct}
\item if $N_Q \leq F_Q = \langle Q \rangle$ and $N_A \leq F_A = \langle A \rangle$ are the largest normal subgroups of $\pi_1(\M)$ contained in $F_Q$ and $F_A$, respectively, then $G_\M = F_Q/N_Q$ and $\eG_\M = F_A/N_A$.\label{item:MaxNormal}
\end{enumerate}
\end{thm}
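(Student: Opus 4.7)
The plan is to realise $\pi_1(\M)$ as a Zappa--Szép product $F_Q \bowtie F_A$ of the two free groups and then read all three conclusions off this decomposition. The defining relation $qa = \lambda(q,a)\rho(q,a)$ is a rewriting rule that swaps a pair from $Q \times A$ to one in $A \times Q$, and bireversibility says exactly that the map $\delta \colon Q \times A \to A \times Q$, $(q,a) \mapsto (\lambda(q,a), \rho(q,a))$, is a bijection; this is the combinatorial input that will make the product well-defined.

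First I would establish the existence of normal forms. Using the defining relation and its formal inverse, one obtains, for each of the four sign patterns, a unique swap rule between an adjacent pair in $Q^{\pm} A^{\pm}$ and a pair in $A^{\pm} Q^{\pm}$, bireversibility being what guarantees all four variants exist. Iteratively applying these rules to any word representing an element $p \in \pi_1(\M)$ pushes all $Q^{\pm}$-letters past all $A^{\pm}$-letters, producing $p = gv$ with $g \in \langle Q \rangle$ and $v \in \langle A \rangle$; the symmetric procedure yields $p = wh$ with $w \in \langle A \rangle$ and $h \in \langle Q \rangle$.

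To upgrade existence to uniqueness and obtain freeness simultaneously, I would package the rewriting into mutual actions of $F_Q$ on $F_A$ and of $F_A$ on $F_Q$, verify the Zappa--Szép compatibility axioms (which are forced by associativity of concatenation together with bijectivity of $\delta$), and form the external Zappa--Szép product $F_Q \bowtie F_A$. The defining relations of $\pi_1(\M)$ hold by construction in this product, so there is a surjective homomorphism $\pi_1(\M) \twoheadrightarrow F_Q \bowtie F_A$ which is the identity on generators; combined with the existence of the normal forms, this must be an isomorphism, giving both \ref{item:TwoFreeGroups} and \ref{item:UniqueProduct}. A more geometric alternative is to build the VH square 2-complex with one vertex, loops indexed by $Q \cup A$, and one square per relation, observe that bireversibility makes the link of the vertex a complete bipartite graph on the appropriate sets, and quote the fact that the universal cover is then isometric to a product of two regular trees on which $F_Q$ and $F_A$ act as translation lattices of each factor.

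For \ref{item:MaxNormal}, I would compute $f_Q$ explicitly using the normal form: for $g \in F_Q$ and $v \in A^{*}$, pushing $g$ past $v$ inside $\pi_1(\M)$ produces a unique factorisation $gv = \lambda_g(v) \cdot (g|_v)$ with $\lambda_g(v) \in F_A$ and $g|_v \in F_Q$, and comparing with the recursive definition of the action shows $f_Q(g)(v) = \lambda_g(v)$. Hence $g \in \ker(f_Q)$ iff $v^{-1} g v \in F_Q$ for every $v \in F_A$. Since any $p \in \pi_1(\M)$ factors as $g' v$ with $g' \in F_Q$ and $v \in F_A$, conjugation by $p$ sends $\ker(f_Q)$ into $F_Q$, so $\ker(f_Q)$ is a $\pi_1(\M)$-normal subgroup contained in $F_Q$; conversely, any normal subgroup of $\pi_1(\M)$ contained in $F_Q$ is closed under conjugation by $F_A$ and thus lies in $\ker(f_Q)$, whence $\ker(f_Q) = N_Q$ and $G_\M = F_Q/N_Q$. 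The statement for $\eG_\M$ is symmetric. The main obstacle in this plan is the uniqueness half of \ref{item:UniqueProduct}: existence of the normal form is a routine confluence argument, but uniqueness demands either the explicit Zappa--Szép construction with its axiom verification, or the geometric input that the universal cover of the VH complex actually splits as a product of trees; once uniqueness is established, items \ref{item:TwoFreeGroups} and \ref{item:MaxNormal} follow by direct book-keeping.
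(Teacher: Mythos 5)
Your overall route is sound and, for items (1) and (2), essentially matches the standard proof: the paper itself offers no argument for this theorem (it is quoted from Bondarenko--Kivva), and the known proofs go exactly through your two alternatives, either the Zappa--Sz\'ep decomposition $F_Q\bowtie F_A$ built from the four sign-pattern swap rules (whose existence and uniqueness are precisely invertibility, reversibility and bijectivity of $\delta$), or the complete VH square complex of Glasner--Mozes whose universal cover is a product of trees. The only point you gloss there is that the letterwise rewriting actions descend from words to the free groups, i.e.\ are compatible with free reduction; this is a finite check that succeeds because all four rules are derived from the single relation $qa=\lambda(q,a)\rho(q,a)$, so for instance $q^{-1}(qa)$ rewrites consistently back to $a$.

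There is, however, a genuine gap in your treatment of item (3), and it sits at the only non-formal point of that item. The map $f_Q$ records the action on the \emph{positive} words $A^*$ only, so what your normal-form computation actually yields is: $g\in\ker(f_Q)$ iff $v^{-1}gv\in F_Q$ for all $v\in A^*$. Your ``Hence \dots for every $v\in F_A$'' silently upgrades this to all reduced words, including those containing inverse letters, and it is exactly this upgrade that makes $\ker(f_Q)$ normal in $\pi_1(\M)$: conjugation by a general element $p=g'v$ involves $v\in F_A$, not merely $v\in A^*$. Without it, the inclusion $\ker(f_Q)\subseteq N_Q$ is unproved. The upgrade is true but needs an argument; here is one that fits your framework. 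For $c\in A$, the rewriting defines restrictions at $c$ and at $c^{-1}$ (writing $gc^{-1}=b^{-1}h$ with $b\in A$, one checks $h|_c=g$ and $h\cdot c=b$), so the maps $h\mapsto h|_c$ and $g\mapsto g|_{c^{-1}}$ are mutually inverse; since every $Q^{\pm1}$-letter restricts to a single letter of the same sign, both maps preserve the finite set $F_Q^{(n)}$ of elements of length at most $n$ and are bijections of it. Now $K=\ker(f_Q)$ is closed under restriction at positive letters, so restriction at $c$ maps the finite set $K\cap F_Q^{(n)}$ injectively into itself, hence onto it; therefore $h=g|_{c^{-1}}\in K$ for every $g\in K$, and since $h$ fixes positive words, $b=h\cdot c=c$, i.e.\ $g$ fixes negative letters as well. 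An induction on the length of a reduced $v\in F_A$, using closure of $K$ under restrictions of both signs, then shows $g$ fixes all of $F_A$. With this step inserted, your proof of (3) (and its symmetric counterpart identifying $\eG_\M=F_A/N_A$) is complete.
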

The alternative way of defining the groups generated by a bireversible automaton given by Theorem \ref{thm:AlternativeDefBireversibleGroup} will often be more convenient for us than the original definition.

We are interested in understanding which groups can be generated by bireversible automata.
We will call such groups \emph{bireversible groups}.

\begin{defn}
A group $G$ will be called bireversible if there exists a bireversible automaton $\M$ such that $G$ is isomorphic to $G_\M$.
\end{defn}

Note that the automaton $\M$ in the previous definition is not necessarily unique.
Note also that one could conceivably define a dual bireversible group as a group isomorphic to $\eG_\M$ for some bireversible automaton $\M$.
However, as the next proposition shows, this is not needed, since every dual group generated by a bireversible automaton is a bireversible group and vice-versa.

\begin{prop}\label{prop:DualIsBireversible}
Let $\M=(A,Q,\lambda, \rho)$ be a bireversible automaton.
There exists a bireversible automaton $\overline{\M} = (Q,A,\overline{\rho}, \overline{\lambda})$ such that $G_\M = \eG_{\overline{\M}}$ and $\eG_\M = G_{\overline{\M}}$.
\end{prop}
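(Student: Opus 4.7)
The plan is to define $\overline{\M}$ so that it has literally the same fundamental group as $\M$ (on the same generating set $Q\cup A$), after which the conclusion follows immediately from Theorem \ref{thm:AlternativeDefBireversibleGroup}\ref{item:MaxNormal}.

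More precisely, let $\delta\colon Q\times A\to A\times Q$, $(q,a)\mapsto(\lambda(q,a),\rho(q,a))$, be the bireversibility bijection of $\M$, and define $\overline{\rho}\colon A\times Q\to Q$ and $\overline{\lambda}\colon A\times Q\to A$ by
\[
(\overline{\rho}(a,q),\overline{\lambda}(a,q)):=\delta^{-1}(a,q).
\]
The first step is to check that $\overline{\M}=(Q,A,\overline{\rho},\overline{\lambda})$ is bireversible. Fixing $a\in A$, as $q$ varies over $Q$ the pair $\delta^{-1}(a,q)=(q_0,a_0)$ ranges bijectively over the set of elements of $Q\times A$ with $\lambda(q_0,a_0)=a$; hence the map $q\mapsto\overline{\rho}(a,q)=q_0$ is a bijection $Q\to Q$ precisely when for every $q_0\in Q$ there is a unique $a_0\in A$ with $\lambda(q_0,a_0)=a$, that is, precisely when $\M$ is invertible. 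A symmetric argument reduces reversibility of $\overline{\M}$ to reversibility of $\M$, while the bireversibility bijection of $\overline{\M}$ is literally $\delta^{-1}$, hence a bijection.

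Next, I would show $\pi_1(\overline{\M})=\pi_1(\M)$ as groups presented on $Q\cup A$. The defining relation of $\pi_1(\overline{\M})$ indexed by $(a,q)\in A\times Q$ reads
\[
aq=\overline{\rho}(a,q)\overline{\lambda}(a,q)=q_0a_0,
\]
where $(q_0,a_0):=\delta^{-1}(a,q)$, so that $\lambda(q_0,a_0)=a$ and $\rho(q_0,a_0)=q$. Substituting, this is the relation $q_0a_0=\lambda(q_0,a_0)\rho(q_0,a_0)$, which is precisely the defining relation of $\pi_1(\M)$ indexed by $(q_0,a_0)\in Q\times A$. Since $\delta^{-1}\colon A\times Q\to Q\times A$ is a bijection, the two indexed families of relations coincide as sets, so the two presentations define the same group.

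Finally, applying Theorem \ref{thm:AlternativeDefBireversibleGroup}\ref{item:MaxNormal} to both $\M$ and $\overline{\M}$, and keeping in mind that the alphabet of $\overline{\M}$ is $Q$ while its state set is $A$, one obtains $G_\M=F_Q/N_Q=\eG_{\overline{\M}}$ and $\eG_\M=F_A/N_A=G_{\overline{\M}}$, where $N_Q$ and $N_A$ denote the largest normal subgroups of the common fundamental group contained in $F_Q$ and $F_A$ respectively. The one delicate point is the choice to define $\overline{\M}$ via $\delta^{-1}$ rather than by naively swapping $\lambda$ and $\rho$; this choice forces the two sets of square relations to match on the nose, reducing the last step to bookkeeping rather than a genuine group-theoretic rewriting.
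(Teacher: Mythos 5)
Your proof is correct and takes essentially the same approach as the paper: you define $\overline{\M}$ via $\delta^{-1}$ (which is exactly the paper's choice of the unique $q',a'$ with $\lambda(q',a')=a$ and $\rho(q',a')=q$), observe that the two presentations give $\pi_1(\overline{\M})=\pi_1(\M)$ on the nose, and conclude by Theorem \ref{thm:AlternativeDefBireversibleGroup}\ref{item:MaxNormal}. The only difference is that you spell out the bireversibility verification that the paper dismisses as ``almost immediate'', which is fine.
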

\begin{proof}
Since $\M$ is bireversible, for every $q\in Q$ and $a\in A$, there exist unique elements $q'\in Q$ and $a'\in A$ such that $\lambda(q',a') = a$ and $\rho(q',a') = q$.
We define the maps $\overline{\lambda}\colon A\times Q \rightarrow A$ and $\overline{\rho}\colon A\times Q \rightarrow Q$ by $\overline{\lambda}(a,q) = a'$ and $\overline{\rho}(a,q) = q'$.
It is almost immediate that the automaton $\overline{\M} = (A,Q,\overline{\rho}, \overline{\lambda})$ is bireversible.
Notice that
\[\pi_1(\overline{\M}) = \langle A,Q \mid aq = q'a' \quad \forall q\in Q, a\in A \rangle = \pi_1(\M).\]
It follows that $G_{\overline{\M}} = F_A/N_A = \eG_\M$ and $\eG_{\overline{\M}} = F_Q/N_Q = G_\M$.
\end{proof}

\subsection{Bireversible automorphisms of trees and commensurators of free groups}\label{subsec:CommensuratorFree}

In this subsection, we will explain the link between bireversible automata and the commensurator of a free group inside the group of automorphisms of its oriented Cayley graph, which was first established by Macedo\'{n}ska, Nekrashevych and Sushchansky \cite{MacedonskaNekrashevychSushchansky00}.
In fact, it is in this context that bireversible automata were first defined and studied.

Before we do so, however, let us first recall the definition of an oriented Cayley graph.

\begin{defn}
Let $G$ be a finitely generated group and let $S\subseteq G$ be a finite set of generators (not necessarily symmetric).
The (unlabelled) oriented Cayley graph of $G$ with respect to $S$ is the graph $\DCay(G,S)$ whose vertex set is $G$ and whose edge set is $E=\{(g,gs) \mid g\in G, s\in S\}$.
\end{defn}

In other words, the oriented Cayley graph of $G$ is the Cayley graph of $G$ where all the edges corresponding to elements of $S^{-1}$ have been removed.
Note that this graph is still connected, but it is generally not strongly connected.

Now, let $\M=(A,Q,\lambda,\rho)$ be a bireversible automaton.
By Theorem \ref{thm:AlternativeDefBireversibleGroup} \ref{item:UniqueProduct}, for any $g\in \pi_1(\M)$ and $v\in F_A \leq \pi_1(\M)$, there exist $h\in F_Q$ and $w\in F_A$ such that $gv= wh$.
The uniqueness of this decomposition implies that the map $\cdot\colon \pi_1(\M)\times F_A \rightarrow F_A$ given by $(g,v) \mapsto w$ is a left group action.
Furthermore, this action is an action by isometries with respect to the word metric $d_A\colon F_A\times F_A \rightarrow \N$ on $F_A$ induced by the generating set $A$.
Indeed, if $v=a_1\dots a_n$, where $a_i\in A^{\pm 1}$ and the word $a_1\dots a_n$ is reduced, and if $g\in \pi_1(\M)$ is an arbitrary element, then it follows from the definition of $\pi_1(\M)$ and Theorem \ref{thm:AlternativeDefBireversibleGroup} that there exist $a'_i\in A^{\pm 1}$, $w\in F_A$ and $h\in F_Q$ such that $g=wh$ and  $g\cdot v = wa'_1 \dots a'_n$.
Therefore, $d_A(g\cdot 1,g\cdot v) = d_A(w,wa'_1\dots a'_n)\leq n=d_A(1,v)$, and since this is valid for any $g\in \pi_1(\M)$ and $v\in F_A$, the other inequality is obtained by applying $h^{-1}$ to $a'_1\dots a'_n$.
Furthermore, notice that for all $g\in \pi_1(\M)$, $a_1, \dots, a_n, a'_1, \dots a'_n \in A^{\pm 1}$ and $w\in F_A$ with $g\cdot (a_1\dots a_n) = wa'_1 \dots a'_n$, we have $a'_i\in A$ if and only if $a_i \in A$, so that the orientation induced by $A$ is preserved.
It follows that every element of $\pi_1(\M)$ defines an automorphism of the oriented Cayley graph $\DCay(F_A,A)$, which is an oriented $2|A|$-regular rooted tree where every vertex has $|A|$ outgoing edges and $|A|$ incoming edges.
Therefore, if we denote $\overrightarrow{T}_A = \DCay(F_A,A)$ the oriented Cayley graph of $F_A$, the action $\cdot\colon \pi_1(\M)\times F_A \rightarrow F_A$ induces a homomorphism $\Psi_\M \colon \pi_1(\M) \rightarrow \Aut(\overrightarrow{T}_A)$.


An element $g\in \pi_1(\M)$ is in the kernel of $\Psi_\M$ if and only if $g\in F_Q$ and for all $v\in F_A$ there exists $h\in F_Q$ such that $gv=vh$.
In particular, for all $v, v'\in F_A$, we have $gvv'= vhv' = vv'h'$ for some $h'\in F_Q$, so that for all $v'\in F_A$, $hv'=v'h'$.
This means that $h$ is also in the kernel of $\Psi_\M$, from which we deduce that the kernel is the largest normal subgroup of $\pi_1(\M)$ that is contained in $F_Q$.
Thus, the map $\Psi_\M$ descends to an injective homomorphism $\psi_\M \colon \pi_1(\M)/N_Q \rightarrow \Aut(\overrightarrow{T}_A)$, where $N_Q$ is the largest normal subgroup of $\pi_1(\M)$ contained in $F_Q$.
By Theorem \ref{thm:AlternativeDefBireversibleGroup} \ref{item:MaxNormal}, $\pi_1(\M)/N_Q$ contains $G_\M$ as a subgroup, and it is easy to see from the definitions that $\psi_\M(G_\M)\leq \Aut_1(\overrightarrow{T}_A)$, where $\Aut_1(\overrightarrow{T}_A)$ is the stabiliser of the identity in the group $\Aut(\overrightarrow{T}_A)$ of automorphisms of $\overrightarrow{T}_A$.

Elements of $\Aut_1(\overrightarrow{T}_A)$ which are in the image of a map $\psi_\M$ for some bireversible automaton $\M$ are called bireversible automorphisms.

\begin{defn}
Let $A$ be a finite set and let $\overline{T}_A$ denote the oriented Cayley graph of the free group $F_A$ with respect to $A$.
An automorphism $f\in \Aut_1(\overrightarrow{T}_A)$ is said to be bireversible if there exists a bireversible automaton $\M$ such that $f\in \psi_\M(G_\M)$.
\end{defn}

Note that through $\psi_\M$, $F_A$ can be seen as a subgroup of $\Aut(\overrightarrow{T}_A)$, which simply corresponds to the action of $F_A$ on itself by left multiplication.
The result of Macedo\'{n}ska, Nekrashevych and Sushchansky is that the elements of $\Aut(\overline{T}_A)$ that both commensurate $F_A$ and fix the identity are exactly the bireversible automorphisms.
Before we properly state their theorem, let us recall what it means to commensurate a subgroup.

\begin{defn}\label{defn:Commensurator}
Let $G$ be a group and let $H\leq G$ be a subgroup.
The commensurator of $H$ in $G$ is the subgroup
\[\Comm_G(H) = \left\{g\in G \mid [H:H\cap gHg^{-1}]<\infty, [gHg^{-1}: H \cap gHg^{-1}]<\infty\right\}.\]
\end{defn}

We can now state Macedo\'{n}ska, Nekrashevych and Sushchansky's result.

\begin{thm}[\protect{\cite[Theorem 6]{MacedonskaNekrashevychSushchansky00}}]\label{thm:MNS}
Let $A$ be a finite set, let $\overrightarrow{T}_A$ be the oriented Cayley graph of the free group $F_A$ with respect to $A$ and let
\[\C[\overrightarrow{T}_A]{F_A}= \Comm_{\Aut(\overrightarrow{T}_A)}(F_A) \cap \Aut_1(\overrightarrow{T}_A)\]
be the subgroup of elements of $\Aut_1(\overrightarrow{T}_A)$ commensurating $F_A$, where $F_A$ is naturally seen as a subgroup of $\Aut(\overrightarrow{T}_A)$ through its action by left multiplication.
Let $B$ be the set of all bireversible tree automorphisms of $\overrightarrow{T}_A$.
Then, $B=\C[\overrightarrow{T}_A]{F_A}$.
\end{thm}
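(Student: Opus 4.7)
My plan is to establish the two inclusions of Theorem \ref{thm:MNS} separately, using throughout the alternative description of $\psi_\M$ given by Theorem \ref{thm:AlternativeDefBireversibleGroup}: in $\pi_1(\M)$ every element has a unique decomposition $gv = p = wh$ with $g,h\in F_Q$ and $v,w\in F_A$, and the $\pi_1(\M)$-action on $F_A$ simply extracts the $F_A$-factor.

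For the inclusion $B\subseteq \C[\overrightarrow{T}_A]{F_A}$, I fix a bireversible $\M=(A,Q,\lambda,\rho)$, an element $g\in F_Q$, and set $f=\psi_\M(g)$. Using the relation $qa=\lambda(q,a)\rho(q,a)$, I would define a right action of $F_A$ on $F_Q$ by setting $h\cdot v := h'$ whenever $hv=v'h'$ in $\pi_1(\M)$; that this is well defined and that each $v\in F_A$ acts by a bijection of $F_Q$ follows from the unique-decomposition part of Theorem \ref{thm:AlternativeDefBireversibleGroup} combined with bireversibility. Because commuting a single generator $q^{\pm 1}\in Q^{\pm 1}$ past any word in $F_A$ yields another generator in $Q^{\pm 1}$ of the same sign, the action preserves the set of reduced words of length at most $m:=|g|$ in $F_Q$, a set with fewer than $(2|Q|)^m$ elements. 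Hence the orbit of $g$ is finite, and orbit-stabilizer shows that $H:=\mathrm{Stab}_{F_A}(g)$ has finite index in $F_A$. For $v\in H$ we have $gv=v'g$ in $\pi_1(\M)$, so $gvg^{-1}=v'\in F_A$, whence $fvf^{-1}\in F_A$. Running the argument with $g^{-1}$ and intersecting gives both finite-index conditions required for commensuration.

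For the converse inclusion $\C[\overrightarrow{T}_A]{F_A}\subseteq B$, I take $f\in \Aut_1(\overrightarrow{T}_A)$ commensurating $F_A$ and set $H:=\{v\in F_A\mid fvf^{-1}\in F_A\}$, a finite-index subgroup. Defining sections by $f_v(u):=f(v)^{-1}f(vu)$, a short computation in $\Aut(\overrightarrow{T}_A)$ gives the identity $f_v = f(v)^{-1}\cdot f\cdot v$. From this it is immediate that $f_v=f_{v'}$ if and only if $f\cdot v'v^{-1}\cdot f^{-1}\in F_A$, equivalently $v'v^{-1}\in H$, so the set $Q:=\{f_v\mid v\in F_A\}$ has exactly $[F_A:H]$ elements. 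I would then declare $Q$ to be the state set of a Mealy automaton $\M$ on alphabet $A$, with output $\lambda(f_v,a):=f(v)^{-1}f(va)$ and transition $\rho(f_v,a):=f_{va}$; well-definedness of both maps follows from the equivalence just established.

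It remains to verify that $\M$ is bireversible and that $\psi_\M(f_1)=f$. Invertibility is immediate since $f$ preserves the edge-orientation of $\overrightarrow{T}_A$. For reversibility, $f_{va}=f_{v'a}$ gives $(v'a)(va)^{-1}=v'v^{-1}\in H$, so $f_v=f_{v'}$, showing that $\rho_a$ is injective and hence bijective on the finite set $Q$. For bireversibility, the main step is injectivity of $\delta\colon Q\times A\to A\times Q$: if $\delta(f_v,a)=\delta(f_{v'},a')$ and $a^*:=\lambda(f_v,a)=\lambda(f_{v'},a')$, then $f(va)=f(v)a^*$ and $f(v'a')=f(v')a^*$, and substituting $u=a^{-1}$ into the identity $f_{va}=f_{v'a'}$ forces $f(v'a'a^{-1})=f(v')$, hence $a=a'$ by injectivity of $f$; reversibility then gives $f_v=f_{v'}$. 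Finally, unwinding the definitions shows $f=\psi_\M(f_1)$, so $f\in B$. The main obstacle I anticipate is the first direction: one must verify carefully that the dual $F_A$-action on $F_Q$ is a genuine group action by bijections and that cancellations in $F_Q$ do not enlarge orbits beyond the expected finite set. In the second direction, the identity $f_v = f(v)^{-1}\cdot f\cdot v$ is the clean computational tool that reduces every subsequent check to the single equivalence $f_v=f_{v'}\iff v'v^{-1}\in H$.
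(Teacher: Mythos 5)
Your proposal is correct, but there is nothing in the paper to compare it against: Theorem \ref{thm:MNS} is imported verbatim from \cite[Theorem 6]{MacedonskaNekrashevychSushchansky00} and the paper gives no proof of it, only the citation. What you have written is a self-contained reconstruction along the lines of the original argument, phrased in the language of $\pi_1(\M)$ that the paper sets up in Section \ref{subsec:CommensuratorFree}. Your first direction is sound: the dual right action $h\cdot v$ defined by $hv=v'h'$ is a genuine action by the uniqueness in Theorem \ref{thm:AlternativeDefBireversibleGroup} \ref{item:UniqueProduct}, and pushing a letter of $A^{\pm1}$ through a letter of $Q^{\pm1}$ produces a single letter of the same sign --- it is worth noting that the four sign cases use, respectively, the defining relations, invertibility, reversibility, and the bijectivity of $\delta$, i.e.\ the full strength of bireversibility. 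Since rewriting followed by free reduction can only shorten words, the orbit of $g$ lies in the ball of radius $|g|$ in $F_Q$, the stabiliser $H$ has finite index, $fHf^{-1}\leq F_A\cap fF_Af^{-1}$ bounds one of the two indices, and the run with $g^{-1}$ gives $H'\leq F_A\cap fF_Af^{-1}$ and bounds the other; this is exactly right. Your converse direction, via the automaton of sections $f_v=f(v)^{-1}\circ f\circ v$ with state set $Q=\{f_v \mid v\in F_A\}$, the equivalence $f_v=f_{v'}\iff v'v^{-1}\in H$ (which makes $\lambda$ and $\rho$ well defined and gives $|Q|=[F_A:H]$), and your checks of reversibility and of injectivity of $\delta$ (bijectivity then following from finiteness and equal cardinalities) are all correct.

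The one step you leave compressed, ``unwinding the definitions shows $f=\psi_\M(f_1)$'', deserves three lines, since $\psi_\M$ is defined through the decomposition in $\pi_1(\M)$ rather than through evaluation. The identity $f_v\circ a = f_v(a)\circ f_{va}$ in $\Aut(\overrightarrow{T}_A)$ (where letters of $A$ act by left multiplication) shows that sending each state $f_v$ to itself and each $a\in A$ to the left multiplication $L_a$ respects the defining relations of $\pi_1(\M)$, hence defines a homomorphism $E\colon \pi_1(\M)\rightarrow \Aut(\overrightarrow{T}_A)$. Every $E(h)$ with $h\in F_Q$ fixes the vertex $1$, so evaluating $E(p)\circ L_v = L_w\circ E(h)$ at $1$, for $pv=wh$ the unique decomposition, yields $E(p)(v)=w=\Psi_\M(p)(v)$; thus $E=\Psi_\M$, and since $f(1)=1$ gives $f_1=f$, you get $\Psi_\M(f_1)=f$, so $f\in \psi_\M(G_\M)=B$ as claimed. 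With that verification spelled out, your proof is complete.
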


\subsection{Automorphisms of quotient graphs}
In this subsection, we record the fact that if a graph is a quotient of the oriented Cayley graph $\overrightarrow{T}_A$ of the free group $F_A$ by a normal subgroup of $F_A$, then its group of automorphisms is a quotient of a subgroup of the group of automorphisms of $\overrightarrow{T}_A$, something that will be very useful to us later on.

In order to do this, we begin by recalling the definition of a marked group.

\begin{defn}
A \emph{marked group} is a triple $(G,A,\varphi)$, where $G$ is a group, $A$ is a set and $\varphi\colon F_A\rightarrow G$ is an epimorphism called a \emph{marking}.
A \emph{finitely generated marked group} is a marked group $(G,A,\varphi)$ such that $A$ is finite.
\end{defn}

If $(G,A,\varphi)$ is a finitely generated marked group, the kernel of $\varphi$, being a subgroup of the free group $F_A$, acts on the oriented Cayley graph $\overrightarrow{T}_A$ of $F_A$ by left multiplication.
Therefore, one can consider the graph $\ker(\varphi) \backslash \overrightarrow{T}_A$.
Note that the difference between this graph and the oriented Cayley graph of $G$ with respect to the generating set $\varphi(A)$ is that $\ker(\varphi) \backslash \overrightarrow{T}_A$ has multiple edges if $\varphi$ is not injective on $A$.
%

It is immediate from the definition that automorphisms of $\overrightarrow{T}_A$ normalising $\ker(\varphi)$ descend to automorphisms of $\ker(\varphi) \backslash \overrightarrow{T}_A$.
Furthermore, by covering space theory, one can see that two such automorphisms descend to the same one if and only if they differ by an element of $\ker(\varphi)$.
We record these facts in the following proposition.

\begin{prop}\label{prop:NormaliserPassesToQuotient}
Let $(G,A,\varphi)$ be a finitely generated marked group, let $\overrightarrow{T}_A$ be the directed Cayley graph of $F_A$ with respect to $A$ and let $X = \ker(\varphi)\backslash \overrightarrow{T}_A$.
Let $p_\varphi\colon \overrightarrow{T}_A \rightarrow X$ be the canonical quotient morphism.
There exists a unique group homomorphism $\zeta_\varphi\colon N_{\Aut(\overrightarrow{T}_A)}(\ker(\varphi)) \rightarrow \Aut(X)$ such that the the following diagram commutes:
\begin{center}
\begin{tikzcd}
\overrightarrow{T}_A \arrow[d, "p_\varphi"] \arrow[r, "g"] & \overrightarrow{T}_A \arrow[d, "p_\varphi"]\\
X \arrow[r, "\zeta_\varphi(g)"] & X
\end{tikzcd}
\end{center}
where $N_{\Aut(\overrightarrow{T}_A)}(\ker(\varphi))$ denotes the normaliser of $\ker(\varphi)$ in $\Aut(\overrightarrow{T}_A)$.
Furthermore, this homomorphism is surjective and its kernel is $\ker(\varphi)$.
In particular, $G\leq \Aut(X)$, and $\zeta_\varphi$ restricts to an isomorphism between $N_{\Aut(\overrightarrow{T}_A)}(\ker(\varphi))\cap \Aut_1(\overrightarrow{T}_A)$ and $\Aut_1(X)$.
\end{prop}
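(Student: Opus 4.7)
The plan is to exploit the fact that $p_\varphi\colon \overrightarrow{T}_A \to X$ is a regular graph covering whose group of deck transformations is exactly the left multiplication action of $\ker(\varphi)$, so that $\zeta_\varphi$ is essentially the covering-theoretic descent map.

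To construct $\zeta_\varphi$: any $g \in N_{\Aut(\overrightarrow{T}_A)}(\ker(\varphi))$ satisfies $g\ker(\varphi)g^{-1} = \ker(\varphi)$, so $g$ permutes $\ker(\varphi)$-orbits on both vertices and edges of $\overrightarrow{T}_A$, and hence descends to a graph automorphism $\zeta_\varphi(g)$ of $X$ (the descent of $g^{-1}$ providing an inverse). Uniqueness of $\zeta_\varphi(g)$ making the square commute is immediate from the surjectivity of $p_\varphi$, and the homomorphism property is a routine diagram chase.

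The main step is the kernel computation. The inclusion $\ker(\varphi) \subseteq \ker(\zeta_\varphi)$ is clear. Conversely, suppose $g \in \ker(\zeta_\varphi)$; then $k := g(1)$ and $1$ have the same image in $X$, so $k \in \ker(\varphi)$, and after replacing $g$ by $k^{-1}g$ we may assume $g(1)=1$. A small combinatorial check shows that for distinct $a, a' \in A$ the outgoing edges $(1,a)$ and $(1,a')$ of $\overrightarrow{T}_A$ lie in distinct $\ker(\varphi)$-orbits, hence project to distinct (possibly parallel) edges of $X$. Since $\zeta_\varphi(g) = \mathrm{id}$, this forces $g$ to fix every outgoing edge at $1$, and in particular every vertex adjacent to $1$ via such an edge. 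Iterating this argument at each such neighbour (and symmetrically for incoming edges), the connectedness of $\overrightarrow{T}_A$ yields $g = \mathrm{id}$, so the original $g$ lies in $\ker(\varphi)$.

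Surjectivity follows by lifting: since $\overrightarrow{T}_A$ is a tree and $p_\varphi$ is a covering of graphs, any $\alpha \in \Aut(X)$ admits a lift to a graph automorphism $g \in \Aut(\overrightarrow{T}_A)$ (determined by the choice of $g(1)$ in the fibre over $\alpha(p_\varphi(1))$). For $k \in \ker(\varphi)$, the conjugate $gkg^{-1}$ descends to $\alpha \circ \mathrm{id} \circ \alpha^{-1} = \mathrm{id}$, so by the kernel computation $gkg^{-1} \in \ker(\varphi)$, placing $g$ in $N_{\Aut(\overrightarrow{T}_A)}(\ker(\varphi))$. The inclusion $G \leq \Aut(X)$ then follows because $F_A$ normalises $\ker(\varphi)$ and $\zeta_\varphi(F_A) = F_A/\ker(\varphi) = G$. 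For the restricted statement, $\ker(\varphi) \cap \Aut_1(\overrightarrow{T}_A) = \{1\}$ yields injectivity on $N_{\Aut(\overrightarrow{T}_A)}(\ker(\varphi)) \cap \Aut_1(\overrightarrow{T}_A)$, and given $\beta \in \Aut_1(X)$ one chooses the lift $g$ of $\beta$ with $g(1) = 1$ to get a preimage in that subgroup. The main obstacle I anticipate is being scrupulous in the kernel computation about the multigraph structure of $X$ when $\varphi$ is not injective on $A$, ensuring that parallel edges of $X$ really do correspond to genuinely distinct $\ker(\varphi)$-orbits of edges upstairs.
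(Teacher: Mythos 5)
Your proof is correct and takes essentially the same route as the paper, which likewise treats $p_\varphi \colon \overrightarrow{T}_A \rightarrow X$ as a covering whose deck transformation group is $\ker(\varphi)$, obtaining surjectivity by lifting automorphisms of $X$ and identifying the kernel with the deck group. The only difference is one of detail: you verify by hand (via freeness of the $\ker(\varphi)$-action on vertices and the resulting orbit-distinctness of edges at each vertex) the facts the paper simply cites from covering space theory, which soundly handles the multigraph subtlety you flag.
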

\begin{proof}
The existence and uniqueness of the homomorphism $\zeta_\varphi$ are immediate from the definition.

Since $F_A$ acts freely on $\overrightarrow{T}_A$, $p_\varphi\colon \overrightarrow{T}_A\rightarrow X$ is a covering, whose group of deck transformations is $\ker(\varphi)$.
Therefore, every automorphism of $X$ lifts to an automorphism of $\overrightarrow{T}_A$, which must necessarily lie in $N_{\Aut(\overrightarrow{T}_A)}(\ker(\varphi))$, and this lift is unique up to translation by an element of the group of deck transformations $\ker(\varphi)$.
It follows that the image of $\varphi$ is $\Aut(X)$ and that its kernel is $\ker(\varphi)$.
\end{proof}

\subsection{Useful constructions with bireversible automata}

In this last subsection of our preliminaries, we present three constructions to produce new bireversible automata from old ones that will be useful to us later on.
The first one is the disjoint union of bireversible automata over the same alphabet.

\begin{prop}\label{prop:UnionOfAutomata}
Let $\M_i=(A,Q_i,\lambda_i,\rho_i)$ for $i\in \{1,\dots, n\}$ be bireversible automata over the same alphabet $A$.
Let $Q=\bigsqcup_{i=1}^{n} Q_i$, and let us define $\lambda\colon Q\times A\rightarrow A$ and $\rho\colon Q\times A \rightarrow Q$ by $\lambda(q_i,a) = \lambda_i(q_i,a)$ and $\rho(q_i,a) = \rho_i(q_i,a)$ for all $a\in A$, $i\in \{1,\dots, n\}$ and $q_i\in Q_i$.
Then, the automaton $\M:=\bigsqcup_{i=1}^{n}\M_i=(A, Q, \lambda, \rho)$ is bireversible, and if we denote by $p_Q\colon F_Q \rightarrow G_\M$ the canonical quotient map, then $p_Q(F_{Q_i})= G_{\M_i}$.
Furthermore, if a subgroup $N\trianglelefteq F_A$ is normal in $\pi_1(\M_i)$ for all $i\in \{1,\dots, n\}$, then $N$ is normal in $\pi_1(\M)$.
\end{prop}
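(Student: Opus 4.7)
The plan is to handle the three claims in sequence, using as a common thread the natural homomorphism $\iota_i\colon \pi_1(\M_i) \to \pi_1(\M)$ obtained by mapping each generator to itself. This is well defined because every defining relation $q_ia = \lambda_i(q_i,a)\rho_i(q_i,a)$ of $\pi_1(\M_i)$ is, by construction of $\M$, a defining relation of $\pi_1(\M)$; moreover $\iota_i$ acts as the identity on $A$, hence by Theorem \ref{thm:AlternativeDefBireversibleGroup} \ref{item:TwoFreeGroups} also on the copy of $F_A$ sitting inside each group. Bireversibility of $\M$ itself is then routine: invertibility holds because $\lambda_q$ on $Q_i$ coincides with the bijection $\lambda_{i,q}$; reversibility holds because $\rho_a\colon Q\to Q$ preserves the partition $Q=\bigsqcup_i Q_i$ and restricts to the bijection $\rho_{i,a}$ on each summand; and $\delta$ likewise decomposes as the disjoint union of the bijections $\delta_i\colon Q_i\times A \to A\times Q_i$.

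For the second claim, by the first isomorphism theorem it is enough to prove $F_{Q_i}\cap N_Q = N_{Q_i}$, which yields $p_Q(F_{Q_i}) = F_{Q_i}/N_{Q_i} = G_{\M_i}$. The crucial point is that the action of $F_{Q_i}$ on $\overrightarrow{T}_A$ inherited from $\pi_1(\M)$ coincides with the one inherited from $\pi_1(\M_i)$. I would deduce this from the uniqueness assertion in Theorem \ref{thm:AlternativeDefBireversibleGroup} \ref{item:UniqueProduct}: for any $g\in F_{Q_i}$ and $v\in F_A$, the decomposition $gv=wh$ computed inside $\pi_1(\M_i)$ has $h\in F_{Q_i}$ and $w\in F_A$, and applying $\iota_i$ yields the unique such decomposition of $\iota_i(g)v$ in $\pi_1(\M)$. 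Since $N_Q$ (respectively $N_{Q_i}$) is precisely the kernel of the action of $\pi_1(\M)$ (resp. $\pi_1(\M_i)$) on $\overrightarrow{T}_A$ as recalled in Subsection \ref{subsec:CommensuratorFree}, an element of $F_{Q_i}$ lies in one of these kernels if and only if it lies in the other.

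For the third claim, since $\pi_1(\M)$ is generated by $Q\cup A$ and the hypothesis $N\trianglelefteq F_A$ already forces $aNa^{-1}=N$ for all $a\in A$, it suffices to verify $qNq^{-1}\subseteq N$ inside $\pi_1(\M)$ for each $q\in Q$. For $q\in Q_i$, this inclusion holds in $\pi_1(\M_i)$ by hypothesis, and applying $\iota_i$, which fixes $F_A$ pointwise and hence fixes $N$ as a subset, transports the inclusion into $\pi_1(\M)$. The only conceptually delicate step in the whole plan is the compatibility of the tree actions of $\pi_1(\M_i)$ and $\pi_1(\M)$ invoked for the second claim; this reduces cleanly to the uniqueness in Theorem \ref{thm:AlternativeDefBireversibleGroup} \ref{item:UniqueProduct}, while everything else is straightforward bookkeeping with generators and relations.
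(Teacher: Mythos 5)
Your proof is correct, but it takes a genuinely different route from the paper's on both nontrivial claims. The paper works purely algebraically: from the presentation it reads off that $\pi_1(\M)$ is the amalgamated free product of the $\pi_1(\M_i)$ over $F_A$ (legitimate since $F_A$ embeds in each factor by Theorem \ref{thm:AlternativeDefBireversibleGroup} \ref{item:TwoFreeGroups}), which makes the normality of $N$ immediate; it then proves $F_{Q_i}\cap N_Q = N_{Q_i}$ by a two-sided maximality argument --- $N_Q\cap \pi_1(\M_i)$ is a normal subgroup of $\pi_1(\M_i)$ contained in $F_{Q_i}$, hence lies in $N_{Q_i}$, and conversely a computation with the unique decompositions of Theorem \ref{thm:AlternativeDefBireversibleGroup} \ref{item:UniqueProduct} shows that the set of $F_Q$-conjugates of $N_{Q_i}$ is invariant under conjugation by $F_A$, so the normal closure of $N_{Q_i}$ in $\pi_1(\M)$ stays inside $F_Q$ and is therefore contained in $N_Q$. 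You instead invoke the dynamical characterisation of $N_Q$ and $N_{Q_i}$ as kernels of the actions on $\overrightarrow{T}_A$ from Section \ref{subsec:CommensuratorFree}, together with your compatibility lemma (the two actions of $F_{Q_i}$ agree, by uniqueness of decompositions), which gives $F_{Q_i}\cap N_Q=N_{Q_i}$ in one stroke; and you get normality of $N$ generator by generator via $\iota_i$ rather than from the amalgam. Your route is arguably more conceptual and bypasses both the amalgam structure and the conjugation computation; what the paper's route buys is the stronger structural fact (the amalgam decomposition, hence $F_Q\cap\pi_1(\M_i)=F_{Q_i}$ and the embeddings $\pi_1(\M_i)\hookrightarrow\pi_1(\M)$), which it implicitly reuses later, e.g.\ to obtain $\pi_1(\M'_i)\leq \pi_1(\M'_{i+1})$ in the proof of Theorem \ref{thm:GroupsWithBigCommensurators}. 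One small precision in your third claim: verifying only $qNq^{-1}\subseteq N$ for $q\in Q\cup A$ would not by itself yield normality, since the set of $g$ with $gNg^{-1}\subseteq N$ is merely a submonoid; but your transport via $\iota_i$ in fact carries over the equality $qNq^{-1}=N$ (and the same for $q^{-1}$) from $\pi_1(\M_i)$, so nothing is broken --- just state the equality.
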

\begin{proof}
A routine check allows one to prove that the automaton $\M$ is indeed bireversible.
From the presentation, we see that its fundamental group $\pi_1(\M)$ is the amalgamated free product of the fundamental groups $\pi_1(\M_i)$ over the subgroup $F_A$.
It follows immediately that a subgroup $N\trianglelefteq F_A$ that is normal in each $\pi_1(\M_i)$ will be normal in $\pi_1(\M)$.

To prove the claim that $p_Q(F_{Q_i}) = G_{\M_i}$, let us denote by $N_Q\trianglelefteq F_Q$ the kernel of $p_Q$ and by $N_{Q_i}\trianglelefteq F_{Q_i}$ the subgroups such that $G_{\M_i} = F_{Q_i}/N_{Q_i}$.
By Theorem \ref{thm:AlternativeDefBireversibleGroup} \ref{item:MaxNormal}, $N_Q$ is the largest normal subgroup of $\pi_1(\M)$ contained in $F_Q$.
Therefore, $N_Q\cap \pi_1(\M_i)$ is a normal subgroup of $\pi_1(\M_i)$ contained in $F_Q\cap \pi_1(\M_i) = F_{Q_i}$, from which we deduce that $N_Q\cap \pi_1(\M_i) \leq N_{Q_i}$.
On the other hand, notice that if $g\in N_{Q_i}$, $h\in F_Q$ and $v\in F_A$, then by Theorem \ref{thm:AlternativeDefBireversibleGroup} \ref{item:UniqueProduct},
\[v^{-1}h^{-1}ghv = v^{-1}h^{-1}gv'h' = v^{-1}h^{-1}v'g'h' = v^{-1}vh'^{-1}g'h' = h'^{-1}g'h'\]
for some $h'\in F_Q$, $g'\in N_{Q_i}$ and $v'\in F_A$.
In other words, the set of conjugates of elements of $N_{Q_i}$ by elements of $F_Q$ is invariant under conjugation by $F_A$.
It follows that the smallest normal subgroup of $\pi_1(\M)$ containing $N_{Q_i}$ is contained in $F_Q$, and therefore that $N_{Q_i}\leq N_Q$.
This finishes showing that $N_Q \cap \pi_1(\M) = N_Q\cap F_{Q_i} = N_{Q_i}$, from which we deduce that $p_Q(F_{Q_i}) = G_{\M_i}$.
\end{proof}

The second construction that we wish to introduce is one related to taking subgroups in the  group generated by the automaton.

\begin{prop}\label{prop:AutomatonCorrespondingToSubgroup}
Let $\M=(A,Q,\lambda,\rho)$ be a bireversible automaton and let $H\leq G_\M$ be a finitely generated subgroup.
Then, there exists a bireversible automaton $\M' = (A, S,\lambda',\rho')$ such that $\psi_{\M'}(\pi_1(\M')/N_S) = \psi_{\M}(\langle H, F_A \rangle)$, where the maps $\psi_{\M}\colon \pi_1(\M)/N_Q \rightarrow \Aut(\overrightarrow{T}_A)$ and $\psi_{\M'}\colon \pi_1(\M')/N_S \rightarrow \Aut(\overrightarrow{T}_A)$ are the ones described in Section \ref{subsec:CommensuratorFree} and the subgroups $N_Q$ and $N_S$ are the ones of Theorem \ref{thm:AlternativeDefBireversibleGroup} \ref{item:MaxNormal}.
\end{prop}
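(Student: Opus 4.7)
The plan is to construct $\M'$ as a kind of ``sub-automaton'' of $\M$: its state set $S$ will consist of finitely many elements of $F_Q$ whose actions on $\overrightarrow{T}_A$, together with $F_A$, generate $\psi_\M(\langle H, F_A \rangle)$. First, lift a finite generating set of $H$ to $\tilde h_1, \ldots, \tilde h_k \in F_Q$ and set $n = \max_i |\tilde h_i|_{F_Q}$. By Theorem~\ref{thm:AlternativeDefBireversibleGroup}~\ref{item:UniqueProduct}, for each $s \in F_Q$ and $a \in A$ there exist unique $b \in F_A$ and $s' \in F_Q$ with $sa = bs'$ in $\pi_1(\M)$; by orientation preservation of the action of $\pi_1(\M)$ on $F_A$, $b$ is in fact a single letter of $A$. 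I then define $S \subseteq F_Q$ to be the closure of $\{\tilde h_1, \ldots, \tilde h_k\}$ under the map $s \mapsto s'$ as $a$ ranges over $A$.

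The crucial step is showing that $S$ is finite. Writing $s = q_1^{\epsilon_1} \cdots q_m^{\epsilon_m}$ as a reduced word in $F_Q$ and iteratively moving $a$ past each $q_i^{\epsilon_i}$ via the defining relations of $\pi_1(\M)$ replaces each pair $(q_i^{\epsilon_i}, a)$ by some $(a', (q_i')^{\epsilon_i})$ with the \emph{same} sign $\epsilon_i$: for $\epsilon_i = +1$ this is direct, and for $\epsilon_i = -1$ bireversibility of $\M$ supplies the unique $(q_i', a') \in Q \times A$ with $q_i a' = a q_i'$, so that $q_i^{-1} a = a' (q_i')^{-1}$. Consequently $s'$ is represented by a word of length $m$ in $Q \cup Q^{-1}$ with the same sign sequence as $s$, hence $|s'|_{F_Q} \leq m$. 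By induction on the iterations, every element of $S$ has $|\cdot|_{F_Q} \leq n$, so $S$ lies in the finite set $\{g \in F_Q : |g|_{F_Q} \leq n\}$.

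Now define $\M' = (A, S, \lambda', \rho')$ by $\lambda'(s, a) = b$ and $\rho'(s, a) = s'$ whenever $sa = bs'$ in $\pi_1(\M)$. Invertibility is immediate since each $s$ permutes $A$ on the first level of $\overrightarrow{T}_A$. For bireversibility, the map $\delta'\colon (s, a) \mapsto (\lambda'(s,a), \rho'(s,a))$ is injective: if $s_1 a_1 = bs' = s_2 a_2$, then $s_1 s_2^{-1} = a_2 a_1^{-1}$ lies in $F_Q \cap F_A = \{1\}$ (which follows from the uniqueness in Theorem~\ref{thm:AlternativeDefBireversibleGroup}~\ref{item:UniqueProduct}), so $s_1 = s_2$ and $a_1 = a_2$; injectivity together with $|S \times A| = |A \times S|$ gives bijectivity, and reversibility follows by the same argument. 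To identify the image, a straightforward induction on $|w|$ for $w \in A^*$ shows that $\psi_{\M'}(s)(w) = \psi_\M(s)(w)$ for every $s \in S$, so $\psi_{\M'}(\pi_1(\M')/N_S) = \langle \psi_\M(S), F_A \rangle$. Since each $\tilde h_i \in S$, we have $\psi_\M(H) \subseteq \langle \psi_\M(S), F_A \rangle$, while conversely the relation $s' = b^{-1} s a$ in $\pi_1(\M)/N_Q$ preserves membership in $\langle H, F_A \rangle$ under restriction, yielding $\psi_\M(S) \subseteq \psi_\M(\langle H, F_A \rangle)$ and hence equality. The main obstacle is the finiteness of $S$, which rests crucially on bireversibility of $\M$ via its preservation of sign sequences under the restriction operation.
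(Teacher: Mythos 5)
Your construction is essentially the paper's: the paper defines the state set as $S=\{s\in F_Q \mid \exists\, t\in T,\ v,w\in F_A,\ tv=ws\}$, i.e.\ the closure of a lifted generating set of $H$ under restrictions by all of $F_A$, observes that elements of $S$ have the same word length as elements of $T$ (you make this explicit via the sign-sequence argument, using that $q^{-1}a=a'(q')^{-1}$ for the unique $(q',a')$ with $q'a'=aq$, which is a nice expansion of what the paper leaves implicit), and then defines $\lambda',\rho'$ by the unique decomposition $sa=bt$, exactly as you do; your letter-by-letter closure coincides with the paper's $S$ since bijectivity of $\rho'_a$ on a finite set gives closure under inverse letters as well. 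The one step you should patch is the identification of the image: your induction shows $\psi_{\M'}(s)(w)=\psi_\M(s)(w)$ only for $w\in A^*$, but the vertex set of $\overrightarrow{T}_A$ is all of $F_A$, and an automorphism fixing the root is \emph{not} determined by its restriction to the positive cone $A^*$ (it can act nontrivially on the subtree below $a^{-1}$ while fixing $A^*$ pointwise). The repair is routine: either extend the induction to reduced words over $A\cup A^{-1}$, using reversibility of $\M'$ to write $sa^{-1}=b^{-1}t$ for the unique $t\in S$, $b\in A$ with $ta=bs$ (a relation holding in both $\pi_1(\M')$ and $\pi_1(\M)$), or, as the paper does, note that $s\mapsto s$, $a\mapsto a$ induces a homomorphism $f\colon \pi_1(\M')\to\pi_1(\M)$ (the defining relations of $\M'$ hold in $\pi_1(\M)$ by construction) which intertwines the two actions on $F_A$ and satisfies $f(N_S)\leq N_Q$; the paper then concludes by computing $\overline{f}(\pi_1(\M'))=\langle H,F_A\rangle$, which matches your final containment argument via $s'=b^{-1}sa$. (A harmless slip: from $s_1a_1=s_2a_2$ one gets $s_2^{-1}s_1=a_2a_1^{-1}\in F_Q\cap F_A=\{1\}$, not $s_1s_2^{-1}$; the conclusion is unaffected.)
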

\begin{proof}
Let $p_Q \colon F_Q \rightarrow G_\M$ be the canonical quotient map.
Since $H$ is finitely generated, there exists a finite subset $T\subseteq F_Q$ such that $\langle p_Q(T) \rangle = H$.
Let us define
\[S=\bigcup_{v\in F_A} \{s\in F_Q \mid \exists \, t\in T, v,w\in F_A \text{ such that } tv=ws \}.\]
It follows from this definition that any element of $S$ must be of the same word length as some element of $T$.
Therefore, since $T$ is finite, $S$ must also be finite.

Let us define $\lambda'\colon S\times A \rightarrow A$ and $\rho'\colon S\times A \rightarrow S$ by $\lambda'(s,a) = b$ and $\rho'(s,a)=t$, where $b\in A$ and $t\in S$ are the unique elements such that $sa=bt$ (note that $t\in S$ from the definition of $S$).
It follows from the uniqueness of this decomposition that $\M'=(A,S,\lambda',\rho')$ is a bireversible automaton.

It is immediate from the definition that we have a homomorphism $f\colon \pi_1(\M')\rightarrow \pi_1(\M)$ such that $f(v)=v$ for all $v\in F_A$ and $f(s)=s$ for all $s\in S$.
Note that if $g\in N_S$, then for all $v\in F_A$, there exists $g'\in N_S$ such that $gv=vg'$, and thus $f(g)v=vf(g')$.
It follows that $f(N_S)\leq N_Q$, from which we conclude that the homomorphism $f$ induces a homomorphism $\overline{f}\colon \pi_1(\M')/N_S \rightarrow \pi_1(\M)/N_Q$ satisfying $\overline{f}(v) = v$ for all $v\in F_A$ and $\overline{f}(p_{S}(s)) = p_Q(s)$ for all $s\in S$, where $p_{S}\colon F_S \rightarrow G_{\M'}$ is the canonical quotient map.
From this, we see that $\psi_{\M'}(\pi_1(\M')) = \psi_\M(\overline{f}(\pi_1(\M')))$ and it suffices to show that $\overline{f}(\pi_1(\M')) = \langle H, F_A \rangle$.

From the fact that $\overline{f}$ acts as the identity on $F_A$ and sends $G_{\M'}$ to $\langle p_Q(S) \rangle$, we already see that $\langle H, F_A \rangle \leq \overline{f}(\pi_1(\M'))$.
To see the opposite containment, it suffices to show that $\langle H, F_A \rangle \cap G_\M$ must contain $p_Q(S)$.
However, this is immediate from the definition of $S$ and the fact that $\langle H, F_A \rangle$ must contain $p_Q(T)$ and $F_A$.
\end{proof}

The third condition that we are going to need is one that allows us to change the natural  generating sets of the automaton group and the dual automaton group to symmetric ones.

\begin{prop}\label{prop:MonoidGenerates}
Let $\M=(A,Q,\lambda, \rho)$ be a bireversible automaton.
There exists a bireversible automaton $\hat{\M}=(\hat{A},\hat{Q},\hat{\lambda},\hat{\rho})$ such that $p_{\hat{Q}}(\hat{Q}^*)=G_{\hat{\M}} \cong G_\M$ and $p_{\hat{A}}(\hat{A}^*) = \eG_{\hat{\M}} \cong \eG_\M$, where $p_{\hat{Q}}\colon F_{\hat{Q}}\rightarrow G_{\hat{\M}}$ and $p_{\hat{A}}\colon F_{\hat{A}}\rightarrow \eG_{\hat{\M}}$ are the canonical quotient maps.
\end{prop}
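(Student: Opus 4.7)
The approach is to symmetrize both the state set and the alphabet by adding formal inverses, defining the Mealy transitions on the new symbols in exactly the way dictated by the relations that hold in the fundamental group $\pi_1(\M)$.

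I set $\hat{A} := A \sqcup A^{-1}$ and $\hat{Q} := Q \sqcup Q^{-1}$, and define $\hat{\lambda}\colon \hat{Q}\times \hat{A} \to \hat{A}$ and $\hat{\rho}\colon \hat{Q}\times \hat{A} \to \hat{Q}$ via a four-case analysis, choosing outputs and transitions so that the Mealy relation $qa = \hat{\lambda}(q,a)\hat{\rho}(q,a)$ holds in $\pi_1(\M)$ after interpreting formal inverses as group inverses. For instance, for $(q,a^{-1})\in Q\times A^{-1}$, bireversibility of $\M$ produces the unique $t\in Q$ with $\rho(t,a)=q$, and setting $b := \lambda(t,a)$ yields the identity $qa^{-1} = b^{-1}t$ in $\pi_1(\M)$, so I set $\hat{\lambda}(q,a^{-1}) := b^{-1}$ and $\hat{\rho}(q,a^{-1}) := t$; the cases $(q^{-1},a)$ and $(q^{-1},a^{-1})$ are handled symmetrically using $\lambda_q^{-1}$ and $\delta^{-1}$. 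A routine verification, splitting $\hat{\lambda}_q$, $\hat{\rho}_a$, and $\hat{\delta}$ into their $A$- and $A^{-1}$-pieces, shows that $\hat{\M}$ is bireversible.

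To obtain $G_{\hat{\M}}\cong G_\M$, I construct a homomorphism $\phi\colon \pi_1(\hat{\M}) \to \pi_1(\M)/N_Q$ sending $q,a$ to themselves and each formal inverse generator to the corresponding group-inverse; by construction the four families of defining relations of $\pi_1(\hat{\M})$ hold in the target, so $\phi$ is well defined and surjective. Let $\hat{N}_{\hat{Q}}$ denote the normal subgroup of $\pi_1(\hat{\M})$ contained in $F_{\hat{Q}}$ that defines $G_{\hat{\M}}$. Its image under $\phi$ is a normal subgroup of $\pi_1(\M)/N_Q$ contained in $F_Q/N_Q = G_\M$, so Theorem \ref{thm:AlternativeDefBireversibleGroup} \ref{item:MaxNormal} applied to $\pi_1(\M)$ forces $\phi(\hat{N}_{\hat{Q}}) = 1$, giving $\hat{N}_{\hat{Q}} \subseteq \ker(\phi)$. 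Conversely, $G_{\hat{\M}}$ acts faithfully on $\hat{A}^*$, and one compares this action with the natural action of $G_\M$ on $F_A$ from Section \ref{subsec:CommensuratorFree}: at the level of individual letters the two agree by construction, and induction on word length, using that self-similar restrictions of trivial elements in $G_\M$ remain trivial, yields $\ker(\phi|_{F_{\hat{Q}}}) \subseteq \hat{N}_{\hat{Q}}$. Combining both inclusions gives $G_{\hat{\M}}\cong G_\M$, and the symmetric argument using $F_{\hat{A}}$ and $N_A$ gives $\eG_{\hat{\M}}\cong \eG_\M$. Under these isomorphisms $\hat{Q}$ maps to $Q\cup Q^{-1}$ and $\hat{A}$ to $A\cup A^{-1}$, which are symmetric generating sets, so $p_{\hat{Q}}(\hat{Q}^*)=G_{\hat{\M}}$ and $p_{\hat{A}}(\hat{A}^*)=\eG_{\hat{\M}}$.

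The main obstacle I anticipate is the reverse inclusion $\ker(\phi|_{F_{\hat{Q}}}) \subseteq \hat{N}_{\hat{Q}}$: a priori the action of $F_{\hat{Q}}$ on the larger free monoid $\hat{A}^*$ could distinguish more elements than the action on $A^*$, since non-reduced words like $aa^{-1}$ and $bb^{-1}$ in $\hat{A}^*$ are distinct even though they represent the same element of $F_A$. The self-similarity argument above resolves this by propagating triviality through restrictions, but it relies crucially on a careful bookkeeping of all four transition cases being oriented correctly so that the level-$1$ action on $\hat{A}$ genuinely matches the vertex action of $G_\M$ on the Cayley graph $\overrightarrow{T}_A$.
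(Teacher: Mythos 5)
Your proposal is correct, and its skeleton coincides with the paper's proof: the automaton $\hat{\M}$ you build is exactly the paper's (the paper defines $\hat{\lambda},\hat{\rho}$ in one stroke by requiring $\iota_Q(\hat{q})\iota_A(\hat{a}) = \iota_A(\hat{\lambda}(\hat{q},\hat{a}))\iota_Q(\hat{\rho}(\hat{q},\hat{a}))$ in $\pi_1(\M)$; your four-case analysis via $\rho_a^{-1}$, $\lambda_q^{-1}$ and $\delta^{-1}$ just exhibits the unique solutions, and arguably makes well-definedness more transparent), and your inclusion $\hat{N}_{\hat{Q}}\subseteq \ker\phi$ is the paper's maximality argument, merely phrased in the quotient $\pi_1(\M)/N_Q$ rather than in $\pi_1(\M)$. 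Where you genuinely diverge is the reverse inclusion, which is the heart of the matter. The paper stays algebraic: writing $N=\kappa|_{F_{\hat{Q}}}^{-1}(N_Q)$, it shows $N$ is normalised separately by $F_A$ and by $F_{\overline{A}}$, using that $F_{\hat{Q}}F_A$ and $F_{\hat{Q}}F_{\overline{A}}$ are subgroups of $\pi_1(\hat{\M})$ on which $\kappa$ restricts to surjections with kernel contained in $F_{\hat{Q}}$; hence $N$ is normal in $\pi_1(\hat{\M})$ and $N\leq N_{\hat{Q}}$ by maximality. You instead argue dynamically: for $x\in F_{\hat{Q}}$ with $\kappa(x)\in N_Q$, the level-one relation $x\hat{a}=\hat{b}x'$, pushed through $\kappa$ and compared with $\kappa(x)\iota_A(\hat{a})=\iota_A(\hat{a})h$ for some $h\in N_Q$, forces $\hat{b}=\hat{a}$ and $\kappa(x')\in N_Q$ by the uniqueness in Theorem \ref{thm:AlternativeDefBireversibleGroup} \ref{item:UniqueProduct} together with the injectivity of $\iota_A$ on $\hat{A}$; the fact that the restriction $h$ stays in $N_Q$ is exactly what Section \ref{subsec:CommensuratorFree} establishes when identifying $\ker\Psi_\M$ with $N_Q$, and your induction on word length then yields triviality on all of $\hat{A}^*$, i.e.\ $x\in\ker f_{\hat{Q}}=N_{\hat{Q}}$ by Theorem \ref{thm:AlternativeDefBireversibleGroup} \ref{item:MaxNormal}. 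Both routes are sound: the paper's is shorter and never leaves $\pi_1(\hat{\M})$, while yours is more hands-on and directly dispels the worry you raise about $\hat{A}^*$ seeing more than $F_A$ (non-reduced words such as $aa^{-1}$ versus $bb^{-1}$ cause no trouble because triviality is propagated letter by letter through restrictions, never through evaluation in $F_A$). The endgame --- symmetric generating sets giving $p_{\hat{Q}}(\hat{Q}^*)=G_{\hat{\M}}$, and the dual statement by the symmetric argument --- matches the paper, which likewise omits the dual verification.
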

\begin{proof}
Let us set $\hat{Q} = Q\sqcup \overline{Q}$ and $\hat{A} = A \sqcup \overline{A}$.
Let $\iota_Q \colon \hat{Q}\rightarrow F_Q$ be defined by $\iota_Q(q)=q$ and $\iota_Q(\overline{q}) = q^{-1}$ for all $q\in Q$, and let $\iota_A\colon \hat{A}\rightarrow F_A$ be defined analogously.
We define the maps $\hat{\lambda}\colon \hat{Q}\times \hat{A} \rightarrow \hat{A}$ and $\hat{\rho}\colon \hat{Q}\times \hat{A} \rightarrow \hat{Q}$ as the unique maps such that $\iota_Q(\hat{q})\iota_A(\hat{a}) = \iota_A(\hat{\lambda}(\hat{q},\hat{a}))\iota_Q(\hat{\rho}(\hat{q},\hat{a}))$ in $\pi_1(\M)$ for all $\hat{q}\in \hat{Q}$ and $\hat{a}\in \hat{A}$.
It is not hard to see that these maps are well-defined, and the uniqueness of the decomposition in $\pi_1(\M)$ implies that the automaton $\hat{\M} = (\hat{A}, \hat{Q}, \hat{\lambda}, \hat{\rho})$ is bireversible.

Let $\kappa\colon \pi_1(\hat{\M})\rightarrow \pi_1(\M)$ be the homomorphism induced by the maps $\iota_Q$ and $\iota_A$ above, and let $N=\kappa^{-1}(N_Q)\cap F_{\hat{Q}}$, where $N_Q$ is the subgroup of $F_Q$ defining $G_\M$ as in Theorem \ref{thm:AlternativeDefBireversibleGroup} \ref{item:MaxNormal}.
From the presentation of $\pi_1(\hat{\M})$ and Theorem \ref{thm:AlternativeDefBireversibleGroup} \ref{item:TwoFreeGroups}, we see that $F_{\hat{Q}}F_A$ is a subgroup of $\pi_1(\hat{\M})$, and since $\kappa$ restricts to the identity on $F_A$, we conclude that $\kappa$ restricts to a surjective homomorphism defined on $F_{\hat{Q}}F_A$ whose kernel is contained in $F_{\hat{Q}}$.
Therefore, $N$ is the pre-image of $N_Q\leq F_Q$ by $\kappa|_{F_{\hat{Q}}F_A}$, which implies that $N$ is normalised by $F_A$.
Similarly, $F_{\hat{Q}}F_{\overline{A}}$ is also a subgroup of $\pi_1(\hat{\M})$, and $N$ is the pre-image of $N_Q$ by $\kappa|_{F_{\hat{Q}}F_{\overline{A}}}$, which implies that it is also normalised by $F_{\overline{A}}$.
It follows that $N$ is normal in $\pi_1(\hat{\M})$ and thus that $N\leq N_{\hat{Q}}$, where $N_{\hat{Q}}$ is the normal subgroup defining $G_{\hat{M}}$ as in Theorem \ref{thm:AlternativeDefBireversibleGroup} \ref{item:MaxNormal}.
To see that $N = N_{\hat{Q}}$, it suffices to notice that $\kappa(N_{\hat{Q}})$ has to be a normal subgroup of $\pi_1(\M)$ that is contained in $\kappa(F_{\hat{Q}}) = F_Q$, so that $\kappa(N_{\hat{Q}})\leq N_Q$.
Therefore, $N_{\hat{Q}} \leq \kappa^{-1}(N_Q)\cap F_{\hat{Q}} = N$.

We have just shown that $N_{\hat{Q}}= \kappa|_{F_{\hat{Q}}}^{-1}(N_Q)$, and since $\kappa|_{F_{\hat{Q}}}$ maps onto $F_Q$, we conclude that it induces an isomorphism $f_{Q}\colon G_{\hat{\M}} = F_{\hat{Q}}/N_{\hat{Q}} \rightarrow G_\M = F_Q/N_Q$.
Since $\kappa(\overline{q}) = \kappa(q)^{-1}$ for all $q\in Q$ by definition, it follows that $p_{\hat{Q}}(\hat{Q}^*) = G_{\hat{\M}}$.

The proofs of the statements regarding $\eG_{\hat{\M}}$ are similar and will be omitted.
\end{proof}

%

\section{Bireversible automata and commensurators of finitely generated groups in the automorphism groups of their Cayley graph}\label{sec:Commensurator}

In this section, we observe that Macedo\'{n}ska, Nekrashevych and Sushchansky's theorem (Theorem \ref{thm:MNS}) can be extended to all finitely generated groups, and we extract a few interesting consequences from this fact.

We begin by remarking that if $(G,A,\varphi)$ is a marked group, the group of elements of $\Aut_1(\ker(\varphi)\backslash \overrightarrow{T}_A)$ commensurating $G$ is isomorphic to a subgroup of elements of $\Aut_1(\overrightarrow{T}_A)$ commensurating $F_A$.

\begin{prop}\label{prop:CommensuratorQuotientGraph}
Let $(G,A,\varphi)$ be a finitely generated marked group and let $X = \ker(\varphi)\backslash \overrightarrow{T}_A$, where $\overrightarrow{T}_A$ is the directed Cayley graph of the free group $F_A$ with respect to $A$.
Let
\[\C{G}= \Comm_{\Aut(X)}(G) \cap \Aut_1(X)\]
be the subgroup of automorphisms of $X$ fixing the vertex corresponding to the identity in $G$ and commensurating $G$, let $N_\varphi$ be the normaliser of $\ker(\varphi)$ in $\Aut(\overrightarrow{T}_A)$ and let
\[\C[N_\varphi]{F_A} = \Comm_{N_\varphi}(F_A)\cap \Aut_1(\overrightarrow{T}_A)\]
be the elements of $N_\varphi$ commensurating $F_A$ and fixing the identity.
Then, when restricted to $\C[N_\varphi]{F_A}$, the homomorphism $\zeta_\varphi$ of Proposition \ref{prop:NormaliserPassesToQuotient} becomes an isomorphism between $\C[N_\varphi]{F_A}$ and $\C{G}$.
\end{prop}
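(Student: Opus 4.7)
The plan is to leverage Proposition \ref{prop:NormaliserPassesToQuotient}, which already gives us that $\zeta_\varphi$ restricts to an isomorphism $N_\varphi \cap \Aut_1(\overrightarrow{T}_A) \to \Aut_1(X)$, with inverse well-defined because $\ker(\varphi) \cap \Aut_1(\overrightarrow{T}_A) = \{1\}$ (the left multiplication action of $F_A$ is free). All that remains is to verify that this bijection carries $\C[N_\varphi]{F_A}$ exactly onto $\C{G}$. The key preliminary observation is that the restriction of $\zeta_\varphi$ to $F_A \leq N_\varphi$ (acting by left multiplication on $\overrightarrow{T}_A$) is precisely the marking $\varphi\colon F_A \to G$, so $\zeta_\varphi(F_A) = G$.

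For the forward inclusion $\zeta_\varphi(\C[N_\varphi]{F_A}) \subseteq \C{G}$, given $g \in \C[N_\varphi]{F_A}$, I would simply apply $\zeta_\varphi$ to the inclusion $g F_A g^{-1} \cap F_A \hookrightarrow F_A$ and observe that
\[\zeta_\varphi(g F_A g^{-1} \cap F_A) \subseteq \zeta_\varphi(g) G \zeta_\varphi(g)^{-1} \cap G.\]
Since any group homomorphism sends a finite-index subgroup to a finite-index subgroup of its image, both commensuration conditions for $\zeta_\varphi(g)$ relative to $G$ follow from the corresponding conditions for $g$ relative to $F_A$.

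The reverse inclusion is the substantive step. Given $h \in \C{G}$, let $g \in N_\varphi \cap \Aut_1(\overrightarrow{T}_A)$ be the unique lift of $h$. The crucial observation is that since $g$ normalises $\ker(\varphi)$ and $\ker(\varphi) \leq F_A$, one has $\ker(\varphi) \leq g F_A g^{-1} \cap F_A$. I would then establish the sharper equality
\[\zeta_\varphi(g F_A g^{-1} \cap F_A) = h G h^{-1} \cap G,\]
where the nontrivial $\supseteq$ comes from writing $k = h \zeta_\varphi(g_1) h^{-1} = \zeta_\varphi(g g_1 g^{-1})$ for $k \in h G h^{-1} \cap G$ and $g_1 \in F_A$; the fact that $\zeta_\varphi(g g_1 g^{-1}) \in G = \zeta_\varphi(F_A)$ forces $g g_1 g^{-1} \in F_A \cdot \ker(\zeta_\varphi) = F_A$. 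Because $\ker(\varphi)$ sits inside $g F_A g^{-1} \cap F_A$ and $F_A / \ker(\varphi) = G$, this equality transfers finite index from $h G h^{-1} \cap G \leq G$ back up to $g F_A g^{-1} \cap F_A \leq F_A$; the analogous argument (using that conjugation by $g$ identifies $F_A / \ker(\varphi)$ with $g F_A g^{-1} / \ker(\varphi)$, realised by $\zeta_\varphi$ as $G \to h G h^{-1}$) handles the other index.

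The main obstacle, as is typical for this sort of covering-theoretic bookkeeping, is not any single estimate but maintaining a clean dictionary between indices of subgroups upstairs in $\overrightarrow{T}_A$ and those of their images on the quotient $X$. The observation that makes this dictionary lossless is that $\ker(\varphi)$ is contained in every intersection of subgroups we form, because $g$ lies in $N_\varphi$; once this is in hand, the rest is a direct translation.
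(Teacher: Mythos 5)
Your proposal is correct and follows essentially the same route as the paper's proof: both directions rely on the unique lift from Proposition \ref{prop:NormaliserPassesToQuotient}, the observation that $\ker(\varphi)\leq \overline{g}F_A\overline{g}^{-1}\cap F_A$ for a lift $\overline{g}$ of an element of $\C{G}$, and the fact that $\ker(\zeta_\varphi)=\ker(\varphi)$ to transfer indices between the tree level and the quotient graph. The only difference is one of detail: you explicitly verify the equality $\zeta_\varphi\bigl(\overline{g}F_A\overline{g}^{-1}\cap F_A\bigr)=gGg^{-1}\cap G$ (by pulling membership in $G=\zeta_\varphi(F_A)$ back through the kernel), a step the paper asserts without proof.
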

\begin{proof}
By Proposition \ref{prop:NormaliserPassesToQuotient}, it suffices to show that $\zeta_\varphi(\C[N_\varphi]{F_A}) = \C{G}$.
If $g\in N_\varphi$ commensurates $F_A$, then $\zeta_\varphi(g)$ commensurates $\zeta_\varphi(F_A)= G$, and since $\zeta_\varphi(\Aut_1(\overrightarrow{T}_A))\subseteq \Aut_1(X)$, it follows that $\zeta_\varphi(\C[N_\varphi]{F_A}) \subseteq \C{G}$.

Now, let $g\in \C{G}$ be an arbitrary element, and let $\overline{g}\in N_{\Aut(\overrightarrow{T}_A)}(\ker(\varphi))\cap \Aut_1(\overrightarrow{T}_A)$ be the unique element such that $\zeta_\varphi(\overline{g}) = g$, whose existence is guaranteed by Proposition \ref{prop:NormaliserPassesToQuotient}.
Since $\ker(\varphi)$ is normalised by $\overline{g}$, it is contained in $\overline{g}F_A\overline{g}^{-1}$, so that it lies in $F_A\cap \overline{g}F_A\overline{g}^{-1}$.
Since the kernel of $\zeta_\varphi$ is $\ker(\varphi)$, it follows that the index of $F_A\cap \overline{g}F_A\overline{g}^{-1}$ in $F_A$ (respectively $\overline{g}F_A\overline{g}^{-1}$) is equal to the index of
\[\zeta_\varphi(F_A\cap \overline{g}F_A\overline{g}^{-1}) = G \cap g Gg^{-1}\]
in $G$ (respectively $gGg^{-1}$), which is finite since $g$ commensurates $G$ by assumption.
Thus, $\overline{g}\in \C[N_\varphi]{F_A}$, which implies that $\zeta_\varphi(\C[N_\varphi]{F_A}) = \C{G}$.
\end{proof}

From Proposition \ref{prop:CommensuratorQuotientGraph} and Theorem \ref{thm:MNS}, we see that if $(G,A,\varphi)$ is a marked group, the group of automorphisms of $\ker(\varphi)\backslash \overrightarrow{T}_A$ commensurating $G$ and fixing the identity is isomorphic to the subgroup of all bireversible automorphisms of $\overrightarrow{T}_A$ normalising $\ker(\varphi)$.
As we will see, the latter subgroup can be expressed as a directed union of bireversible groups.
To establish this, we first need to introduce a notion of compatibility between a bireversible automaton and a marked group.

\begin{defn}\label{defn:CompatibleAutomaton}
A bireversible automaton $\M=(A,Q,\lambda,\rho)$ is compatible with a marked group $(G,A,\varphi)$ if $\ker(\varphi)$ is a normal subgroup of $\pi_1(\M)/N_Q$, where $N_Q$ is as in Theorem \ref{thm:AlternativeDefBireversibleGroup} \ref{item:MaxNormal}.
\end{defn}

Recall from Section \ref{subsec:CommensuratorFree} that for every bireversible automaton $\M=(A,Q,\lambda,\rho)$, we have an injective homomorphism $\psi_\M \colon \pi_1(\M)/N_Q \rightarrow \Aut(\overrightarrow{T}_A)$.
If $\M$ is compatible with a marked group $(G,A,\varphi)$, then $\psi_\M(\pi_1(\M)/N_Q)$ normalises $\psi_\M(\ker(\varphi)) = \ker(\varphi)$.
In particular, the map $\zeta_\varphi$ of Proposition \ref{prop:NormaliserPassesToQuotient} is defined and injective on $\psi_\M(G_\M)$, so that we can see $G_\M$ as a subgroup of $\Aut_1(\ker(\varphi)\backslash\overrightarrow{T}_A)$.
In analogy with the case of the tree, we will call automorphisms of the graph $\ker(\varphi)\backslash\overrightarrow{T}_A$ in the image of some bireversible group \emph{bireversible automorphisms}.

\begin{defn}
Let $(G,A,\varphi)$ be a finitely generated marked group.
An automorphism $f\in \Aut_1(\ker(\varphi)\backslash \overrightarrow{T}_A)$ is a bireversible automorphism if there exists a bireversible automaton $\M=(A,Q,\lambda,\rho)$ compatible with $(G,A,\varphi)$ such that $f\in \zeta_\varphi(\psi_\M(G_\M))$, where the map $\psi_\M$ is as in Section \ref{subsec:CommensuratorFree} and the map $\zeta_\varphi$ is as in Proposition \ref{prop:NormaliserPassesToQuotient}.
\end{defn}

We are now able to state an analogue of Macedo\'{n}ska, Nekrashevych and Sushchansky's theorem for all finitely generated groups.

\begin{thm}\label{thm:GeneralisedMNS}
Let $(G,A,\varphi)$ be a finitely generated marked group and let $X=\ker(\varphi)\backslash \overrightarrow{T}_A$, where $\overrightarrow{T}_A$ is the directed Cayley graph of the free group $F_A$ with respect to $A$.
Let
\[\C{G}= \Comm_{\Aut(X)}(G) \cap \Aut_1(X)\]
be the subgroup of automorphisms of $X$ fixing the vertex corresponding to the identity in $G$ and commensurating $G$, and let $B_X$ be the set of bireversible automorphisms of $X$.
Then, $\C{G}=B_X$.
\end{thm}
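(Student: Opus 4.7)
My plan is to prove the two inclusions separately, leveraging Theorem \ref{thm:MNS} (the Macedo\'{n}ska--Nekrashevych--Sushchansky result for free groups) together with the translation to the quotient graph supplied by Proposition \ref{prop:CommensuratorQuotientGraph}.

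For the inclusion $B_X \subseteq \C{G}$, I take $f \in B_X$, so by definition $f = \zeta_\varphi(\psi_\M(g))$ for some bireversible automaton $\M=(A,Q,\lambda,\rho)$ compatible with $(G,A,\varphi)$ and some $g \in G_\M$. Compatibility means $\psi_\M(G_\M)$ normalizes $\ker(\varphi)$, so $\psi_\M(g) \in N_\varphi \cap \Aut_1(\overrightarrow{T}_A)$. Moreover $\psi_\M(g)$ is a bireversible automorphism of $\overrightarrow{T}_A$, so by Theorem \ref{thm:MNS} it lies in $\C[\overrightarrow{T}_A]{F_A}$, hence in $\C[N_\varphi]{F_A}$. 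Applying Proposition \ref{prop:CommensuratorQuotientGraph} then yields $f = \zeta_\varphi(\psi_\M(g)) \in \C{G}$.

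The harder direction is $\C{G} \subseteq B_X$. Given $f \in \C{G}$, Proposition \ref{prop:CommensuratorQuotientGraph} provides a unique $\bar{f} \in \C[N_\varphi]{F_A}$ with $\zeta_\varphi(\bar{f}) = f$. In particular $\bar{f}$ commensurates $F_A$ and fixes the identity, so by Theorem \ref{thm:MNS} it is a bireversible automorphism of $\overrightarrow{T}_A$; hence there exists a bireversible automaton $\M_0 = (A,Q_0,\lambda_0,\rho_0)$ with $\bar{f} \in \psi_{\M_0}(G_{\M_0})$. The catch is that $\psi_{\M_0}(G_{\M_0})$ need not normalize $\ker(\varphi)$, so $\M_0$ need not be compatible with $(G,A,\varphi)$. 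This is the main obstacle, and I will overcome it by shrinking the automaton.

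Concretely, I will apply Proposition \ref{prop:AutomatonCorrespondingToSubgroup} to the cyclic (hence finitely generated) subgroup $H = \langle \bar{f}\rangle \leq G_{\M_0}$ to produce a bireversible automaton $\M' = (A,S,\lambda',\rho')$ such that
\[\psi_{\M'}\bigl(\pi_1(\M')/N_S\bigr) \;=\; \psi_{\M_0}\bigl(\langle H, F_A \rangle\bigr).\]
Since $\bar{f}$ normalizes $\ker(\varphi)$ by hypothesis and $F_A$ certainly does (as $\ker(\varphi) \trianglelefteq F_A$), the group on the right normalizes $\ker(\varphi)$, and therefore so does $\psi_{\M'}(G_{\M'})$; this is exactly the compatibility of $\M'$ with $(G,A,\varphi)$. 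It remains to check that $\bar{f}$ itself lies in $\psi_{\M'}(G_{\M'})$ and not merely in $\psi_{\M'}(\pi_1(\M')/N_S)$. For this I use Theorem \ref{thm:AlternativeDefBireversibleGroup}\ref{item:UniqueProduct}: any preimage of $\bar{f}$ in $\pi_1(\M')/N_S$ factors uniquely as $hv$ with $h \in G_{\M'}$ and $v \in F_A$, and evaluating the resulting automorphism $\psi_{\M'}(h)\psi_{\M'}(v)$ at the identity vertex gives $v$; since $\bar{f}$ fixes the identity, $v=1$ and $\bar{f} \in \psi_{\M'}(G_{\M'})$. Consequently $f = \zeta_\varphi(\bar{f}) \in \zeta_\varphi(\psi_{\M'}(G_{\M'})) \subseteq B_X$, completing the proof.
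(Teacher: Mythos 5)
Your proof is correct and follows essentially the same route as the paper's: both inclusions via Proposition \ref{prop:CommensuratorQuotientGraph} and Theorem \ref{thm:MNS}, with Proposition \ref{prop:AutomatonCorrespondingToSubgroup} applied to $\langle \bar{f} \rangle$ to manufacture an automaton compatible with $(G,A,\varphi)$; your final check that $\bar{f} \in \psi_{\M'}(G_{\M'})$ even makes explicit a step the paper leaves implicit. One small slip: evaluating $\psi_{\M'}(h)\psi_{\M'}(v)$ at the identity vertex gives $\psi_{\M'}(h)(v)$, not $v$ itself, but since $\psi_{\M'}(h)$ is an isometry fixing the identity, the conclusion $v=1$ still follows (alternatively, use the other decomposition $p=wh$ with $w\in F_A$ from Theorem \ref{thm:AlternativeDefBireversibleGroup}\ref{item:UniqueProduct}, for which evaluation at the identity does give $w$ exactly).
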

\begin{proof}
By Proposition \ref{prop:CommensuratorQuotientGraph}, $\C{G}=\zeta_\varphi(\C[N_\varphi]{F_A})$, where $\zeta_\varphi$ is the homomorphism of Proposition \ref{prop:NormaliserPassesToQuotient} and
\[\C[N_\varphi]{F_A} = \Comm_{\Aut(\overrightarrow{T}_A)}(F_A)\cap \Aut_1(\overrightarrow{T}_A)\cap N_\varphi\]
with $N_\varphi$ denoting the normaliser in $\Aut(\overrightarrow{T}_A)$ of $\ker(\varphi)$.
Furthermore, by Proposition \ref{prop:NormaliserPassesToQuotient}, the map $\zeta_\varphi$ is an isomorphism onto its image when restricted to elements fixing the identity.
Let $B'_X\subseteq N_{\varphi}\cap\Aut_1(\overrightarrow{T}_A)$ be the pre-image of $B_X$ by the restriction of $\zeta_\varphi$ to $\Aut_1(\overrightarrow{T}_A)$, so that $\zeta_\varphi$ becomes a bijection when restricted to $B'_X$.

By Theorem \ref{thm:MNS}, $\C[N_\varphi]{F_A} = B\cap N_\varphi$, where $B$ denotes the set of all bireversible automorphisms of $\overrightarrow{T}_A$.
Thus, we need only to prove that $B\cap N_\varphi = B'_X$.

It is clear from the definitions that $B'_X\subseteq B\cap N_\varphi$.
To prove the other containment, let $g\in B\cap N_\varphi$ be an arbitrary element.
By definition of $B$, there exists a bireversible automaton $\M=(A,Q,\lambda,\rho)$ and an element $\hat{g}\in G_\M$ such that $g\in \psi_\M(\hat{g})$, where the homomorphism $\psi_\M\colon \pi_1(\M)/N_Q \rightarrow \Aut(\overrightarrow{T}_A)$ is the one described in Section \ref{subsec:CommensuratorFree}.
Then, by Proposition \ref{prop:AutomatonCorrespondingToSubgroup}, there exists a bireversible automaton $\M'=(A,S,\lambda',\rho')$ such that $\psi_{\M'}(\pi_1(\M')/N_S) = \psi_\M(\langle \hat{g}, F_A \rangle) = \langle g, F_A \rangle$.
Since $g\in N_\varphi$ and $F_A\leq N_\varphi$, we conclude that $\psi_{\M'}(\pi_1(\M')/N_S)\leq N_\varphi$, and since $\psi_{\M'}$ is an isomorphism onto its image that sends $F_A$ to $F_A$, we conclude that $\ker(\varphi)$ is normal in $\pi_1(\M')/N_S$.
Therefore, $\M'$ is compatible with the marked group $(G,A,\varphi)$ and thus $g$ is a bireversible automorphism of $X$.
This shows that $B\cap N_\varphi = B'_X$, which implies that $\C{G}=B_X$.
\end{proof}

As a direct consequence of Theorem \ref{thm:GeneralisedMNS}, we recover the well-known fact that the group generated by a bireversible automaton is finite if and only if the dual group is finite (see for instance \cite{SavchukVorobets11} Proposition 2.2, where it is stated in greater generality that what we need here).
We record it here as we shall need it later on.

\begin{cor}\label{cor:GroupFiniteIFFDualFinite}
Let $\M=(A,Q,\lambda,\rho)$ be a bireversible automaton.
Then, $G_\M$ is finite if and only if $\eG_\M$ is finite.
\end{cor}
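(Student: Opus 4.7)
The plan is to use Theorem \ref{thm:GeneralisedMNS} to embed one of $G_\M$, $\eG_\M$ into a commensurator-style subgroup associated with the other, and then exploit the fact that the automorphism group of a finite graph is finite. By Proposition \ref{prop:DualIsBireversible}, the roles of $G_\M$ and $\eG_\M$ are symmetric (the dual of $\overline{\M}$ recovers the original situation), so it suffices to prove that if $G_\M$ is finite, then $\eG_\M$ is finite.

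First, I would consider the marked group $(G_\M, Q, p_Q)$, where $p_Q\colon F_Q\rightarrow G_\M$ is the canonical quotient map, and I would form the graph $X = \ker(p_Q)\backslash \overrightarrow{T}_Q = N_Q\backslash \overrightarrow{T}_Q$. I then check that the dual automaton $\overline{\M}$ (with alphabet $Q$ and state set $A$, as provided by Proposition \ref{prop:DualIsBireversible}) is compatible with this marked group in the sense of Definition \ref{defn:CompatibleAutomaton}: this amounts to checking that $N_Q = \ker(p_Q)$ is normal in $\pi_1(\overline{\M})/N_A$. Since $\pi_1(\overline{\M}) = \pi_1(\M)$ and $N_Q$ is normal in $\pi_1(\M)$ by Theorem \ref{thm:AlternativeDefBireversibleGroup} \ref{item:MaxNormal}, compatibility holds.

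By the definition of bireversible automorphisms of $X$ and Theorem \ref{thm:GeneralisedMNS}, the composition $\zeta_{p_Q}\circ\psi_{\overline{\M}}$ then realises $\eG_\M = G_{\overline{\M}}$ as a subgroup of $\C[X]{G_\M}$. Injectivity of this embedding follows from the injectivity of $\psi_{\overline{\M}}$ together with the fact that $\zeta_{p_Q}$ restricts to an isomorphism on $\Aut_1(\overrightarrow{T}_Q)\cap N_{p_Q}$ (Proposition \ref{prop:NormaliserPassesToQuotient}) and that $\psi_{\overline{\M}}(G_{\overline{\M}}) \leq \Aut_1(\overrightarrow{T}_Q)$.

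Now, if $G_\M$ is finite, then the graph $X$ has $|G_\M|$ vertices and finite valency, so it is a finite graph; hence $\Aut(X)$ is finite, and therefore so are $\Aut_1(X)$ and its subgroup $\C[X]{G_\M}$. This forces $\eG_\M$ to be finite. The converse is obtained by swapping the roles of $\M$ and $\overline{\M}$ and repeating the argument. I do not expect any serious obstacle here; the only thing to be careful about is keeping track of which free group plays the role of the alphabet and which plays the role of the states when invoking Theorem \ref{thm:AlternativeDefBireversibleGroup} and Theorem \ref{thm:GeneralisedMNS} for $\overline{\M}$.
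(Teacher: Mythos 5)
Your proof is correct and is essentially the paper's own argument: the paper proves the direction ``$\eG_\M$ finite $\Rightarrow G_\M$ finite'' directly, by noting via Theorem \ref{thm:AlternativeDefBireversibleGroup} \ref{item:MaxNormal} that $\M$ is compatible with $(\eG_\M,A,\varphi)$ and embedding $G_\M$ through $\zeta_\varphi\circ\psi_\M$ into the finite group $\Aut_1(\ker(\varphi)\backslash\overrightarrow{T}_A)$, then dualises with Proposition \ref{prop:DualIsBireversible} --- which is exactly your argument applied to $\overline{\M}$ in place of $\M$, with the roles of the two directions swapped. The only cosmetic difference is your detour through $\C[X]{G_\M}$ via Theorem \ref{thm:GeneralisedMNS}, which is unnecessary: landing injectively in the finite group $\Aut_1(X)$ already suffices, as the paper's proof shows.
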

\begin{proof}
Suppose that $\eG_\M$ is finite, and let $\varphi\colon F_A \rightarrow \eG_\M$ be the canonical quotient map.
Then, the graph $\ker(\varphi)\backslash \overrightarrow{T}_A$ is finite and thus its group of automorphisms is finite.
By Theorem \ref{thm:AlternativeDefBireversibleGroup} \ref{item:MaxNormal}, $\M$ is compatible with $(\eG_\M, A,\varphi)$.
Therefore, $\zeta_\varphi(\psi_\M(G_\M))$ is well-defined, where $\psi_\M$ is the map of Section \ref{subsec:CommensuratorFree} and $\zeta_\varphi$ is the one of Proposition \ref{prop:NormaliserPassesToQuotient}, and it is finite, since it is a subgroup of a finite group.
The map $\psi_\M$ is injective, and $\zeta_\varphi$ is injective when restricted to automorphisms fixing the identity.
We conclude that $G_\M$ is finite.

The fact that $\eG_\M$ is finite if $G_\M$ is follows directly from the above argument and Proposition \ref{prop:DualIsBireversible}.
\end{proof}

Recall that the (unlabelled) Cayley graph of a group $G$ with respect to a finite generating set $S$ is the directed graph whose vertex set is $G$ and whose edge set is $\{(g,gs)\mid g\in G, s\in S\cup S^{-1}\}$.
Thus, as long as one excludes the identity from the generating set, Cayley graphs are a particular case of graphs of the form $\ker(\varphi)\backslash\overrightarrow{T}_A$ and we can apply Theorem \ref{thm:GeneralisedMNS} to them.

%

As a consequence of Theorem \ref{thm:GeneralisedMNS}, we obtain that any group admitting a Cayley graph for which the commensurator of the group in the automorphism group of the Cayley graph is "big" must be closely related to an automaton group, in a sense that we are going to make precise.

\begin{thm}\label{thm:GroupsWithBigCommensurators}
Let $G$ be a finitely generated group, and suppose that there exists some finite generating set $S\subseteq G$ (not containing the identity) such that the group
\[\C[\Cay(G,S)]{G} = \Comm_{\Aut(\Cay(G,S))}(G) \cap \Aut_1(\Cay(G,S))\]
is not locally finite.
Then, there exists a sequence $G\trianglerighteq N_1\trianglerighteq N_2 \trianglerighteq \dots$ of normal subgroups of $G$ such that each $G/N_i$ is an infinite bireversible group and $\bigcap_{i=1}^{\infty} N_i = D_G$, where $D_G$ denotes the largest subgroup of $G$ that is normalised by $\Comm_{\Aut(\Cay(G,S))}(G)$.
In other word, $G/D_G$ is residually infinite bireversible.
\end{thm}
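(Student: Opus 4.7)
The plan is to realise each quotient $G/N_i$ as the dual group $\eG_{\M^{(i)}}$ of a carefully chosen bireversible automaton $\M^{(i)}$ compatible with the marking $\varphi\colon F_A \twoheadrightarrow G$ attached to the generating set $S$. By Theorem~\ref{thm:GeneralisedMNS} the group $\C{G}$ coincides with the set of bireversible automorphisms of $X = \ker(\varphi)\backslash\overrightarrow{T}_A$, so the non-local-finiteness hypothesis supplies a finitely generated infinite subgroup $K_0\leq \C{G}$ once and for all. This $K_0$ will be the source of the infiniteness of the quotients $G/N_i$, while a separate per-$g$ argument will take care of the separation $g\notin N_i$ for each $g\in G\setminus D_G$.

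Fix $g\in G\setminus D_G$. Maximality of $D_G$ forces the subgroup $\langle \gamma g\gamma^{-1}: \gamma\in\Comm_{\Aut(X)}(G)\rangle$ to escape $G$ (otherwise it would itself be a subgroup of $G$ normalised by $\Comm_{\Aut(X)}(G)$ and containing $g$), so we can fix a word $h = \prod_i \gamma_i g^{\epsilon_i}\gamma_i^{-1}\in\Aut(X)\setminus G$. Writing $\gamma_i = a_i c_i$ with $a_i\in G$ and $c_i\in \C{G}$ (using $\Comm_{\Aut(X)}(G) = G\cdot\C{G}$), set $K_g = \langle K_0,c_1,\dots,c_n\rangle\leq\C{G}$, which is still finitely generated and infinite. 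Combining Proposition~\ref{prop:AutomatonCorrespondingToSubgroup} with the isomorphism of Proposition~\ref{prop:CommensuratorQuotientGraph}, I then produce a bireversible automaton $\M_g$ compatible with $(G,A,\varphi)$ whose image $H_{\M_g}\leq\Aut(X)$ contains $\langle G,K_g\rangle$. Letting $N_{A_g}$ denote the largest normal subgroup of $\pi_1(\M_g)$ contained in $F_A$ and $M_{\M_g} := \varphi(N_{A_g})\trianglelefteq G$, Proposition~\ref{prop:DualIsBireversible} and Corollary~\ref{cor:GroupFiniteIFFDualFinite} yield that $G/M_{\M_g} = \eG_{\M_g}$ is an infinite bireversible group (infiniteness because $G_{\M_g}$ surjects onto $K_g\supseteq K_0$). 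The separation $g\notin M_{\M_g}$ is then automatic: otherwise, normality of $M_{\M_g}$ in $H_{\M_g}$ would force $h\in M_{\M_g}\leq G$, contradicting $h\notin G$.

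Enumerating $G\setminus D_G = \{g_1,g_2,\dots\}$, I set $N_i := M_{\M_{g_1}}\cap\cdots\cap M_{\M_{g_i}}$. By Proposition~\ref{prop:UnionOfAutomata}, the disjoint union $\M^{(i)} := \bigsqcup_{j\leq i}\M_{g_j}$ is a bireversible automaton compatible with $(G,A,\varphi)$, and unpacking the amalgamated-free-product structure of $\pi_1(\M^{(i)})$ shows that its largest normal subgroup inside $F_A$ equals $\bigcap_{j\leq i} N_{A_{g_j}}$, so $G/N_i = \eG_{\M^{(i)}}$ is infinite bireversible; the sequence is decreasing and $g_i\notin N_i$, which already gives $\bigcap_i N_i\subseteq D_G$. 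The main obstacle, which I expect to be the crux of the proof, is the reverse inclusion $D_G\leq \bigcap_i N_i$, equivalently $\varphi^{-1}(D_G)\leq N_{A_g}$ for every $g$. The strategy is to combine the $\Comm_{\Aut(X)}(G)$-normalisation of $D_G$ with the uniqueness of decomposition in $\pi_1(\M_g)$ from Theorem~\ref{thm:AlternativeDefBireversibleGroup}~\ref{item:UniqueProduct}: for any $q\in F_{Q_g}$ and $d\in\varphi^{-1}(D_G)$, normalisation shows that the image of $qdq^{-1}$ in $H_{\M_g}$ lies in $D_G\leq G$, which forces $qdq^{-1}\in \varphi^{-1}(D_G)\cdot N_{Q_g}$; the delicate task is then to promote this to a genuine containment $qdq^{-1}\in\varphi^{-1}(D_G)$, so that the normal closure of $\varphi^{-1}(D_G)$ in $\pi_1(\M_g)$ stays inside $F_A$ and lands in $N_{A_g}$. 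This last step is where the proof becomes subtle, and will likely require either a carefully tailored choice of $\M_g$ (perhaps enlarging it via Proposition~\ref{prop:UnionOfAutomata} with automata tuned to the compatibility structure) or an additional argument extracting the required rigidity directly from the relations of $\pi_1(\M_g)$.
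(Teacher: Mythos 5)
Your first half is essentially correct, and it takes a genuinely different route from the paper. The paper never argues element by element: it writes $\C{G}=B_X$ as an increasing union of images of compatible automata $\M'_i$ (built, as you do, from Propositions \ref{prop:AutomatonCorrespondingToSubgroup} and \ref{prop:UnionOfAutomata}), sets $N_i=K_i/\ker(\varphi)$ where $K_i$ is the largest normal subgroup of $\pi_1(\M'_i)$ contained in $F_A$, and gets $\bigcap_i N_i\leq D_G$ globally: $K_\infty=\bigcap_i K_i$ is normal in every $\pi_1(\M'_i)$, so $K_\infty/\ker(\varphi)$ is normalised by all of $B_X=\C{G}$ and by $G$, hence by $\Comm_{\Aut(X)}(G)=G\cdot\C{G}$, and the maximality of $D_G$ finishes that inclusion. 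Your per-element escape argument --- a product of commensurator-conjugates of $g$ leaving $G$, which would be trapped in $M_{\M_g}\leq G$ if one had $g\in M_{\M_g}$ --- is a clean alternative mechanism for the same inclusion, and it has the merit of not requiring the chosen automata to exhaust $\C{G}$; your identification of the largest normal subgroup of $\pi_1\bigl(\bigsqcup_{j\leq i}\M_{g_j}\bigr)$ inside $F_A$ with $\bigcap_{j\leq i}N_{A_{g_j}}$ via the amalgam structure is also correct and matches the last clause of Proposition \ref{prop:UnionOfAutomata}.

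The genuine gap is that you leave the reverse inclusion $D_G\leq\bigcap_i N_i$, equivalently $\varphi^{-1}(D_G)\leq N_{A_g}$ for every $g$, unproven, so the proposal establishes only the separation half of the theorem (``$G/D_G$ is residually infinite bireversible'' up to the exact identification of the intersection). The paper disposes of this step in one line rather than by the element-wise promotion you attempt: since $D_G$ is normalised by the whole commensurator and is normal in $G$, the subgroup $\zeta_\varphi^{-1}(D_G)=\varphi^{-1}(D_G)\leq F_A$ is normalised both by $F_A$ and by $\psi_{\M'_i}(G_{\M'_i})$, i.e.\ by the image of all of $\pi_1(\M'_i)$ in $\Aut(\overrightarrow{T}_A)$; the paper concludes that it is normal in $\pi_1(\M'_i)$ and hence contained in $K_i$ by the maximality statement of Theorem \ref{thm:AlternativeDefBireversibleGroup}~\ref{item:MaxNormal}. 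Two remarks on your diagnosis. First, the ambiguity you isolate --- normality of a subgroup of $F_A$ in $\pi_1(\M)/N_Q$ versus in $\pi_1(\M)$ itself, i.e.\ discarding the $N_{Q}$-factor in $qdq^{-1}\in\varphi^{-1}(D_G)N_{Q_g}$ --- is precisely the distinction that the paper's deduction passes over silently, so you have correctly located the one delicate point of the whole proof. Second, you are not actually in a better position than the paper on this point elsewhere in your own argument: the same identification is already needed when you deduce $\ker(\varphi)\leq N_{A_g}$ from compatibility (Definition \ref{defn:CompatibleAutomaton} is itself a statement modulo $N_Q$), without which your isomorphism $G/M_{\M_g}\cong\eG_{\M_g}$ is not justified. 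So locating the subtlety is valuable, but as submitted neither your tailored-automaton suggestion nor the rigidity argument is carried out, and the equality $\bigcap_i N_i=D_G$ remains unestablished; to match the paper you should either justify passing from normality modulo $N_Q$ to containment in the largest normal subgroup of $\pi_1$ inside $F_A$, or restructure the conclusion so that only the maximality argument of the paper is invoked.
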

\begin{proof}
Let $A=S\cup S^{-1}$, and let $\varphi\colon F_A\rightarrow G$ be the canonical quotient map.
Then, the Cayley graph $\Cay(G,S)$ is isomorphic to the graph $X=\ker(\varphi)\backslash \overrightarrow{T_A}$, where $\overrightarrow{T}_A$ denotes the oriented Cayley graph of $F_A$ with respect to $A$.
Let us denote by $B_X$ the set of all bireversible automorphisms of $X$.
By Theorem \ref{thm:GeneralisedMNS}, $\C{G} = B_X$, so that $B_X$ is not locally finite.

By definition, there exist (not necessarily distinct) bireversible automata $\M_i$ ($i\in \N_{> 0}$) compatible with the marked group $(G,A,\varphi)$ such that $B_X=\bigcup_{i\in \N}\zeta_\varphi(\psi_{\M_i}(G_{\M_i}))$, where $\psi_{\M_i}$ is as in Section \ref{subsec:CommensuratorFree} and $\zeta_\varphi$ is as in Proposition \ref{prop:NormaliserPassesToQuotient}.
Since $B_X$ is not locally finite, we can assume without loss of generality that $\zeta_\varphi(\psi_{\M_1}(G_{\M_1}))$ is infinite.

Let $\M'_1=\M_1$, and let us recursively define $\M'_i = \M'_{i-1}\sqcup \M_i$ for $i\in \N_{>1}$ (see Proposition \ref{prop:UnionOfAutomata} for the definition of the union of two bireversible automata over the same alphabet).
Since $\ker(\varphi)$ is normal in $\pi_1(\M_i)$ for all $i\in \N_{>0}$, it follows from Proposition \ref{prop:UnionOfAutomata} that it is also normal in $\pi_1(\M'_i)$ for all $i\in \N_{>0}$, so that the automata $\M'_i$ are compatible with $(G,A,\varphi)$.
Still by Proposition \ref{prop:UnionOfAutomata}, $G_{\M'_{i-1}}, G_{\M_i}\leq G_{\M'_i}$ for all $i$, so that $\zeta_\varphi(\psi_{\M'_i}(G_{\M'_i})) \subseteq \zeta_\varphi(\psi_{\M'_{i+1}}(G_{\M'_{i+1}}))$ and $B_X=\bigcup_{i\in \N}\zeta_\varphi(\psi_{\M'_i}(G_{\M'_i}))$.
Note in particular that each $G_{\M'_i}$ is infinite by our assumption on $G_{\M_1}$.

Let us denote by $K_i\trianglelefteq F_A$ the subgroups such that $F_A/K_i = \eG_{\M'_i}$.
By Theorem \ref{thm:AlternativeDefBireversibleGroup} \ref{item:MaxNormal}, $K_i$ is the largest normal subgroup of $\pi_1(\M'_i)$ contained in $F_A$, so that $\ker(\varphi)\leq K_i$.
It follows from Proposition \ref{prop:UnionOfAutomata} and Theorem \ref{thm:AlternativeDefBireversibleGroup} that $\pi_1(\M'_i)\leq \pi_1(\M'_{i+1})$, which implies that $K_{i+1}$ is a normal subgroup of $\pi_1(\M'_i)$ and thus that $K_{i+1}\leq K_i$.
If we write $N_i=K_i/\ker(\varphi)$, we conclude that $G=F_A/\ker(\varphi)\trianglerighteq N_1\trianglerighteq N_2 \trianglerighteq \dots$.
We have $G/N_i \cong F_A/K_i = \eG_{\M'_i}$, so that $G/N_i$ is a bireversible group by Proposition \ref{prop:DualIsBireversible}.
Since $G_{\M'_i}$ is infinite, we conclude from Corollary \ref{cor:GroupFiniteIFFDualFinite} that $\eG_{\M'_i}$, and thus $G/N_i$, is infinite.

Let $K_{\infty} = \bigcap_{i=1}^{\infty}K_i$.
Then, $\ker(\varphi)\leq K_\infty$, and since each $K_i$ is normal in $\pi_1(\M'_{j})$ for $j\leq i$, we conclude that $K_\infty$ is normal in $\pi_1(\M'_i)$ for all $i\in \N_{>0}$.
It follows that $K_\infty$ is normalised by $\psi_{\M'_i}(G_{\M'_i})$, and thus that $\bigcap_{i=1}^{\infty}N_i = K_\infty / \ker(\varphi)$ is normalised by $\zeta_\varphi(\psi_{\M'_i}(G_{\M'_i}))$ for all $i\in \N_{>0}$.
Therefore, $K_\infty/\ker(\varphi)$ is normalised by $B_X=\C{G}$ and is normal in $G$.
Since every element of $\Comm_{\Aut(\Cay(G,S))}(G)$ can be written as a product of an element of $G$ and of an element of $\C{G}$, we conclude that $K_\infty/\ker(\varphi)\leq D_G$, where $D_G$ is the largest subgroup of $G$ normalised by $\Comm_{\Aut(\Cay(G,S))}(G)$.

To see the opposite containment, notice that $D_G$ is normalised by $\zeta_\varphi(\psi_{\M'_i}(G_{\M'_i}))$ for all $i\in \N_{>0}$, which means that $\zeta_{\varphi}^{-1}(D_G)\leq F_A$ is normalised by $\psi_{\M'_i}(G_{\M'_i})$.
Since $D_G$ is also normal in $G$ (because $G\leq \Comm_{\Aut(\Cay(G,S))}(G)$), we conclude that $\zeta_{\varphi}^{-1}(D_G)$ is normal in $\pi_1(\M'_i)$, and thus contained in $K_i$, for every $i\in \N_{>0}$.
This finishes showing that $\bigcap_{i=1}^{\infty}N_i = D_G$.
\end{proof}

As a corollary, we obtain that if every proper quotient of a group is virtually nilpotent (in particular, if every proper quotient is finite, i.e. if the group is just infinite), then the commensurator of this group in the group of automorphisms of any of its Cayley graphs has to be "small", unless the group is a bireversible group.

\begin{cor}\label{cor:JustInfiniteIsBirOrSmall}
If $G$ is an infinite finitely generated group such that every proper quotient is virtually nilpotent, then either $G$ is a bireversible group or 
\[\C{G}= \Comm_{\Aut(X)}(G) \cap \Aut_1(X)\]
is locally finite for all Cayley graphs $X$ of $G$.
\end{cor}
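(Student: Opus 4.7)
The plan is to apply Theorem \ref{thm:GroupsWithBigCommensurators} in the contrapositive. Assume that there exists a Cayley graph $X=\Cay(G,S)$ of $G$ for which $\C{G}$ is not locally finite. By Theorem \ref{thm:GroupsWithBigCommensurators}, there exists a descending sequence $G \trianglerighteq N_1 \trianglerighteq N_2 \trianglerighteq \dots$ of normal subgroups of $G$ such that each quotient $G/N_i$ is an \emph{infinite} bireversible group. The goal is to leverage the strong restriction on proper quotients of $G$ to force one of these $N_i$ to be trivial, which will give that $G$ itself is bireversible.

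For each $i$, since $G/N_i$ is infinite, we have $N_i \neq G$. I would split into two cases according to whether some $N_i$ equals $\{1\}$ or not. In the first case, $G \cong G/N_i$ is itself an infinite bireversible group and we are done. In the second case, every $N_i$ is a non-trivial normal subgroup strictly smaller than $G$, so that each $G/N_i$ is a proper quotient of $G$ and therefore virtually nilpotent by hypothesis. But Klimann's theorem from \cite{Klimann18}, quoted in the introduction, asserts that a virtually nilpotent group generated by a bireversible automaton is necessarily finite. This directly contradicts the infiniteness of the bireversible groups $G/N_i$, ruling the second case out.

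Hence only the first case occurs, and $G$ is bireversible, completing the proof of the dichotomy.

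The proof is almost entirely a corollary-style chaining of Theorem \ref{thm:GroupsWithBigCommensurators} with Klimann's result, so there is no genuine obstacle; the only point worth stating carefully is the distinction between $N_i = \{1\}$ (allowed, and yielding the conclusion) and $N_i$ being a \emph{proper} non-trivial normal subgroup (the case where the virtually nilpotent hypothesis is actually invoked, via the standard convention that a proper quotient is one by a non-trivial normal subgroup).
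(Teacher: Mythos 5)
Your proposal is correct and follows essentially the same route as the paper: apply Theorem \ref{thm:GroupsWithBigCommensurators} to obtain an infinite bireversible quotient $G/N$, then invoke Klimann's result that an infinite bireversible group cannot be virtually nilpotent to conclude that $G/N$ cannot be a proper quotient, forcing $N=\{1\}$. The only cosmetic difference is that you carry along the full sequence $N_1 \trianglerighteq N_2 \trianglerighteq \dots$ where the paper simply extracts a single such $N$, and your explicit remark about the convention that a proper quotient means quotienting by a non-trivial normal subgroup is exactly the point the paper uses implicitly.
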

\begin{proof}
If $\C{G}$ is not locally finite, then by Theorem \ref{thm:GroupsWithBigCommensurators}, there exists a normal subgroup $N\trianglelefteq G$ such that $G/N$ is an infinite bireversible group.
By a result of Klimann \cite{Klimann18}, an infinite bireversible group cannot be virtually nilpotent, which by our assumptions implies that $G/N$ cannot be a proper quotient of $G$.
Therefore, $G$ has to be bireversible.
\end{proof}

At present, we do not know if there can exist just infinite bireversible groups.

\begin{qu}
Can a bireversible automaton group be just infinite?
\end{qu}

\section{Bireversible groups and non compact-by-discrete automorphism groups of graphs}\label{sec:GraphicalDiscreteness}

In this section, we observe that an infinite bireversible group $G$ acts faithfully and geometrically on a graph $X$ such that the commensurator of $G$ in the group of automorphisms of $X$ has a finitely generated subgroup that is not discrete.
It follows that $\Aut(X)$ is not compact-by-discrete (recall that a topological group $G$ is \emph{compact-by-discrete} if there exists a compact normal subgroup $N$ such that $G/N$, equipped with the quotient topology, is discrete.).
This will allow us to conclude that several groups are not bireversible groups.


\begin{thm}\label{thm:BirNotGraphicallyDiscrete}
Let $G$ is an infinite bireversible group.
Then, there exists a connected, locally finite graph $X$ on which $G$ acts faithfully and geometrically such that $\text{Comm}_{\Aut(X)}(G)\cap \Aut_x(X)$ contains a finitely generated subgroup that is not discrete, for some $x\in X$. 
Consequently, $\Aut(X)$ is not compact-by-discrete.
\end{thm}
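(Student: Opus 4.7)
The plan is to realise $G$ as $G_\M$ for a bireversible automaton $\M = (A,Q,\lambda,\rho)$ and to build $X$ directly from $\M$. Writing $G = F_Q/N_Q$ (via Theorem~\ref{thm:AlternativeDefBireversibleGroup}\ref{item:MaxNormal}), let $\varphi \colon F_Q \twoheadrightarrow G$ be the canonical projection and set $X := \ker(\varphi)\backslash\overrightarrow{T}_Q$. Then $X$ is a connected locally finite directed multi-graph (essentially the directed Cayley graph of $G$ with respect to the multi-set $\varphi(Q)$), and $G$ acts on $X$ by left multiplication faithfully, vertex-transitively, and freely, so the action is geometric.

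Next, I pull in the dual automaton $\overline{\M} = (Q,A,\overline{\rho},\overline{\lambda})$ from Proposition~\ref{prop:DualIsBireversible}: it is bireversible with $G_{\overline{\M}} = \eG_\M$, and satisfies $\pi_1(\overline{\M}) = \pi_1(\M)$. Since $N_Q = \ker(\varphi)$ is normal in $\pi_1(\M)$ by construction, its image in $\pi_1(\overline{\M})/N_A$ is again normal, which means $\overline{\M}$ is compatible with the marked group $(G,Q,\varphi)$ in the sense of Definition~\ref{defn:CompatibleAutomaton}. The composition $\zeta_\varphi \circ \psi_{\overline{\M}}$ then embeds $\eG_\M = G_{\overline{\M}}$ into the group of bireversible automorphisms of $X$, which by Theorem~\ref{thm:GeneralisedMNS} is precisely $\C{G}$. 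Moreover, $\eG_\M$ is finitely generated by the finite alphabet $A$ and, by Corollary~\ref{cor:GroupFiniteIFFDualFinite}, is infinite since $G = G_\M$ is infinite by hypothesis.

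The subgroup $\eG_\M$ lies inside $\Aut_1(X)$, and since $X$ is locally finite, $\Aut_1(X)$ is compact Hausdorff in the permutation topology on $\Aut(X)$; as discrete subgroups of compact Hausdorff groups are necessarily finite, the infinite finitely generated subgroup $\eG_\M$ cannot be discrete in $\Aut(X)$. This proves the first assertion of the theorem with $x$ the vertex corresponding to the identity. For the final claim, suppose for contradiction that $\Aut(X)$ is compact-by-discrete, with $N$ a compact open normal subgroup. The image of $\C{G}$ in the discrete quotient $\Aut(X)/N$ is the continuous image of a subset of the compact set $\Aut_1(X)$, hence is compact and discrete, hence finite; so $\C{G}\cap N$ has finite index in $\C{G}$. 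The main obstacle is then to upgrade this to the finiteness of $\C{G} \cap N$ itself (which would contradict the infinitude of $\eG_\M$). I would attempt this by combining the three constraints on any $n\in\C{G}\cap N$: it fixes the identity vertex, it commensurates the freely transitively acting lattice $G$, and (being inside the compact normal $N$) it has uniformly bounded orbits on $X$. Together these should force such an $n$ to agree with an element of $G$ on a finite-index subgroup of $G$ and, via the free action of $G$, to reduce to only finitely many possibilities, giving the desired contradiction.
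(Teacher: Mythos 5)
Your construction of $X$ and of the non-discrete subgroup is correct and is essentially the paper's argument up to duality: the paper invokes Proposition~\ref{prop:DualIsBireversible} to write $G\cong\eG_\M$ and embeds $G_\M$ into $\C{G}$ via $\zeta_\varphi\circ\psi_\M$, whereas you write $G\cong G_\M$ and embed $\eG_\M=G_{\overline{\M}}$ via $\zeta_\varphi\circ\psi_{\overline{\M}}$; since $\pi_1(\overline{\M})=\pi_1(\M)$, these are the same argument with the two sides of the duality swapped, and your compatibility check (normality of $N_Q$ in $\pi_1(\M)/N_A$, using $N_Q\cap N_A=1$) is sound. The only cosmetic difference is that the paper passes to the unoriented graph before applying compactness of the vertex stabiliser, which changes nothing essential.

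The genuine gap is in your final step, and you flag it yourself: you reduce correctly to the finite-index statement $[\C{G}:\C{G}\cap N]<\infty$, but your plan to conclude by showing that $\C{G}\cap N$ is \emph{finite} cannot work, because that claim is false in general. Concretely, take the marked group $(\Z,\{a,b\},\varphi)$ with $\varphi(a)=\varphi(b)=1$, so that $X=\ker(\varphi)\backslash\overrightarrow{T}_A$ has vertex set $\Z$ with two parallel edges from $n$ to $n+1$. Then $\Aut(X)\cong\left(\prod_{n\in\Z}\Z/2\Z\right)\rtimes D_\infty$ is compact-by-discrete with $N=\prod_{n\in\Z}\Z/2\Z$, yet every $k$-periodic pattern $\sigma$ of swaps of parallel edge pairs fixes all vertices, lies in $N$, and satisfies $\sigma t^k\sigma^{-1}=t^k$ (where $t$ generates $\Z$), hence $\sigma G\sigma^{-1}\cap G\supseteq\langle t^k\rangle$; the periodic patterns form an infinite (elementary abelian, locally finite) subgroup of $\Comm_{\Aut(X)}(G)\cap\Aut_1(X)\cap N$. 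So your three constraints on $n\in\C{G}\cap N$ (fixing the basepoint, commensurating $G$, bounded orbits) do not reduce $n$ to finitely many possibilities: commensuration only makes $n$ agree with a group isomorphism $H\to nHn^{-1}$ on a finite-index $H\leq G$, and vertex-fixing automorphisms permuting multiple edges survive all three constraints in abundance.

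What the hypotheses do force, and what the paper proves instead, is that $N$ is \emph{locally finite}: by compactness the $N$-orbits of vertices are finite, by normality of $N$ and vertex-transitivity of $\Aut(X)$ they all have the same size, so $N$ maps into a direct power of symmetric groups of bounded degree, with kernel the vertex-fixing automorphisms, which embed likewise via their action on the finitely many parallel edges between adjacent vertex pairs; a subgroup of a product of finite groups of bounded order is locally finite. Combined with your (correct) observation that $\pi(\Aut_1(X))$ is finite, so that $\Aut_1(X)\cap N$ has finite index in $\Aut_1(X)$, this makes $\Aut_1(X)$ itself locally finite. The contradiction is then with the \emph{finitely generated infinite} subgroup $\eG_\M\leq\C{G}\leq\Aut_1(X)$ you already constructed --- not with infiniteness of $\C{G}\cap N$, which is consistent with compact-by-discreteness. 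This is precisely why the theorem is phrased in terms of a finitely generated non-discrete subgroup; replacing your last step by this local finiteness argument completes the proof.
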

\begin{proof}
By Proposition \ref{prop:DualIsBireversible}, there exists a bireversible automaton $\M=(A,Q,\lambda,\rho)$ such that $G\cong \eG_\M$.
Let us denote by $\varphi\colon F_A \rightarrow \eG_\M$ the canonical quotient map.
Then, by Theorem \ref{thm:AlternativeDefBireversibleGroup} \ref{item:MaxNormal}, $\M$ is compatible with the marked group $(\eG_\M, A, \varphi)$ (see Definition \ref{defn:CompatibleAutomaton}), so that $\zeta_\varphi(\psi_\M(G_\M))\leq \Aut_1(\overrightarrow{X})$, where $\overrightarrow{X}=\ker(\varphi)\backslash \overrightarrow{T}_A$, $\overrightarrow{T}_A$ denotes the oriented Cayley graph of $F_A$ with respect to $A$, $\psi_\M\colon \pi_1(\M)/N_Q \rightarrow \Aut(\overrightarrow{T}_A)$ is the map described in Section \ref{subsec:CommensuratorFree} and $\zeta_\varphi\colon N_{\Aut(\overrightarrow{T}_A)}(\ker(\varphi))\rightarrow \Aut(\overrightarrow{X})$ is the map of Proposition \ref{prop:NormaliserPassesToQuotient}.
By Theorem \ref{thm:GeneralisedMNS}, $\zeta_\varphi(\psi_\M(G_\M))\leq \C[\overrightarrow{X}]{G} = \text{Comm}_{\Aut(\overrightarrow{X})}(G) \cap \Aut_1(\overrightarrow{X})$.
As noted in Section \ref{subsec:CommensuratorFree}, $\psi_\M$ is injective, and $\zeta_\varphi$ is injective on the image of $\psi_\M(G_\M)$ by Proposition \ref{prop:NormaliserPassesToQuotient}.
Therefore, $\C[\overrightarrow{X}]{G}$ contains a subgroup isomorphic to $G_\M$.
Note that since $\eG_\M$ is infinite, $G_\M$ is infinite by Corollary \ref{cor:GroupFiniteIFFDualFinite}, so that $\C[\overrightarrow{X}]{G}$ contains a finitely generated infinite subgroup.

Let us denote by $X$ the unoriented graph obtained from $\overrightarrow{X}$ by forgetting the orientation of the edges.
It is clear that $\Aut(\overrightarrow{X})\leq \Aut(X)$, so that $\C{G} = \text{Comm}_{\Aut(X)}(G)\cap \Aut_1(X)$ contains a finitely generated infinite subgroup.
Since $\Aut(X)$ is Hausdorff, any discrete subgroup has to be closed, and since $\Aut_1(X)$ is compact (see for instance \cite{Woess91}, Lemma 1), all discrete subgroups of $\Aut_1(X)$ have to be finite.
We conclude that $\text{Comm}_{\Aut(X)}(G)\cap \Aut_1(X)$ contains a finitely generated subgroup that is not discrete.

To see that $\Aut(X)$ is not compact-by-discrete, let us suppose for the sake of contradiction that it is.
Then, there exists a compact normal subgroup $N\trianglelefteq \Aut(X)$ such that $\Aut(X)/N$ is discrete with the quotient topology.
By compactness, the orbit of any vertex under the action of $N$ must be finite (this is due to the fact that cosets of vertex stablilisers are open sets in $\Aut(X)$).
Since the action of $\Aut(X)$ is transitive on the vertices of $X$ (the action of $G\cong \eG_\M$ on $X$ by left multiplication is transitive on the set of vertices, which is $\eG_\M$), and since $N$ is normal in $\Aut(X)$, all orbits of vertices under the action of $N$ are conjugate, and thus must have the same size.
This implies that $N$ is locally finite, since it can be mapped into a direct power of finite symmetric groups of bounded size (one for each orbit), and the kernel corresponds to the subgroup of $N$ fixing all the vertices of $X$, which must also be locally finite, since it embeds in a product of finite symmetric groups of bounded size (one for each pair of vertices with multiple edges between them).

Let us denote by $\pi\colon \Aut(X) \rightarrow \Aut(X)/N$ the quotient map, which is continuous by definition.
Since $\Aut_1(X)$ is compact, the image $\pi(\Aut_1(X))$ is compact in the discrete group $\Aut(X)/N$, and thus finite.
This implies that $\Aut_1(X)\cap N$ is of finite index in $\Aut_1(X)$.
Therefore, since $N$ is locally finite, $\Aut_1(X)$ must also be locally finite, and thus every finitely generated subgroup of $\Aut_1(X)$ must be discrete.
This contradicts the fact that $\C{G}$ contains a finitely generated non-discrete subgroup.
Thus, $\Aut(X)$ is not compact-by-discrete.

Since $G$ acts freely and transitively on the vertex set of $X$ by left multiplication, it acts geometrically on $X$ when the latter is seen as a 1-dimensional CW-complex.
\end{proof}

%
%

Theorem \ref{thm:BirNotGraphicallyDiscrete} gives us a convenient criterion to establish that a group cannot be bireversible.
We give below a non-exhaustive list of groups which can be shown not to be bireversible by combining Theorem \ref{thm:BirNotGraphicallyDiscrete} with other results in the literature.

\begin{cor}\label{cor:ListOfGroups}
Let $G$ be a finitely generated group.
If $G$ belongs to one of the following families of groups
\begin{enumerate}
\item infinite virtually nilpotent groups,\label{item:virtuallyNilpotent}
\item irreducible lattices in center-free, real semi-simple Lie groups without compact factors and not locally isomorphic to $\mathrm{SL}_2(\mathbb{R})$,
\item uniform lattices in $\mathrm{PSL}_2(\mathbb{R})$,
\item the group $\mathrm{Out}(F_n)$ of outer automorphisms of a free group of rank $n\geq 3$,
\item topologically rigid hyperbolic groups,
\item hyperbolic groups whose visual boundary is homeomorphic to an $n$-sphere with $n\leq 3$ or to a Sierpinski carpet,
\item fundamental groups of closed irreducible oriented 3-manifolds with non-trivial geometric decomposition,
\end{enumerate}
then $G$ is not bireversible.
\end{cor}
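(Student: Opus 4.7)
The plan is to use Theorem \ref{thm:BirNotGraphicallyDiscrete} as a negative criterion: an infinite bireversible group $G$ must admit a geometric action on some connected locally finite graph $X$ with $\Aut(X)$ not compact-by-discrete. Equivalently, if $G$ is infinite and $\Aut(X)$ is compact-by-discrete for every connected locally finite graph $X$ on which $G$ acts geometrically (a property that has been studied under names such as \emph{graphical discreteness} or \emph{action rigidity}), then $G$ cannot be bireversible. Proving the corollary thus amounts to pairing each listed family with a result from the literature establishing this graphical rigidity, the infiniteness of the groups in each family being either clear or part of the defining condition.

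For item (1), Trofimov's theorem \cite{Trofimov84}, recalled in the introduction, already supplies the relevant rigidity for virtually nilpotent groups, so that together with the infiniteness assumption the conclusion follows; alternatively, one could simply invoke Klimann's result \cite{Klimann18} as in the proof of Corollary \ref{cor:JustInfiniteIsBirOrSmall}. Items (2)--(7) each rest on a deeper rigidity statement available in the literature: Margulis/Mostow-style rigidity for higher-rank and non-$\mathrm{SL}_2(\mathbb{R})$ irreducible semisimple lattices for (2); Tukia--Casson--Jungreis rigidity of Fuchsian groups for uniform lattices in $\mathrm{PSL}_2(\mathbb{R})$ for (3); rigidity theorems for $\mathrm{Out}(F_n)$ with $n \geq 3$ for (4); the very definition of topological rigidity for (5); quasi-isometric rigidity of hyperbolic groups whose visual boundary is a low-dimensional sphere (Casson--Jungreis, Gabai, Bonk--Kleiner) or a Sierpinski carpet (Bonk--Kleiner, Bonk--Merenkov) for (6); and Kapovich--Leeb quasi-isometric rigidity of fundamental groups of non-geometric $3$-manifolds for (7).

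The main obstacle is not the individual rigidity theorems themselves but the bookkeeping step of recasting each of them as a statement about the compact-by-discreteness of $\Aut(X)$ for graphs $X$ on which $G$ acts geometrically. The translation follows a uniform template: if $G$ acts geometrically on a connected locally finite graph $X$ with $\Aut(X)$ not compact-by-discrete, then the closure of $G$ in $\Aut(X)$ sits as a cocompact lattice in a totally disconnected locally compact group that is not discrete modulo its compact kernel, and classical arguments on lattice envelopes and self-quasi-isometries then contradict the rigidity hypothesis invoked for the relevant family. I would work this translation out once in a single auxiliary lemma and then apply it uniformly, so that each external citation is used only to rule out the existence of the resulting non-discrete overgroup or of the associated exotic self-quasi-isometry of $G$.
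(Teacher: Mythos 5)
Your proposal is correct and takes essentially the same route as the paper: the paper likewise uses Theorem \ref{thm:BirNotGraphicallyDiscrete} as a negative criterion and then simply cites \cite{MargolisSherpherStarkWoodhouse23+} (and the references therein, which are the very rigidity theorems you list) for the fact that each listed family is graphically discrete, i.e.\ cannot act geometrically on a connected locally finite graph whose automorphism group fails to be compact-by-discrete. The translation from the classical rigidity results to compact-by-discreteness of $\Aut(X)$, which you propose to carry out in an auxiliary lemma, is precisely the content the paper outsources to that citation, so no further argument appears there.
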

\begin{proof}
We refer the reader to \cite{MargolisSherpherStarkWoodhouse23+} and the references cited therein for proofs that the family of groups in the statement cannot act geometrically on connected locally finite graphs with a non compact-by-discrete group of automorphisms.
Thus, they are not bireversible automaton groups by Theorem \ref{thm:BirNotGraphicallyDiscrete}.
\end{proof}

The fact that virtually nilpotent groups are not bireversible automaton groups (item \ref{item:virtuallyNilpotent}) was already shown by Klimann in \cite{Klimann18} by a different method, but to the best of our knowledge, there were no prior proofs for the other items on this list.

\section{Cyclic subgroup distortion}\label{sec:Distortion}

Our main goal in this section is to show that cyclic subgroups of bireversible automaton groups are always undistorted, from which we will derive a few consequences.
Before we do that, let us first recall what it means for a subgroup to be undistorted.

\begin{defn}
Let $G$ be a finitely generated group, generated by some finite set $S\subseteq G$, and let $H\leq G$ be a finitely generated subgroup of $G$, generated by some finite set $T\subseteq H$.
We say that $H$ is \emph{undistorted} in $G$ if there exists some constant $C\in \mathbb{R}_{>0}$ such that
\[|h|_T \leq C|h|_S\]
for all $h\in H$, where $|\cdot|_T$ denotes the word metric in $H$ with respect to $T$ and  $|\cdot|_S$ denotes the word metric in $G$ with respect to $S$.
\end{defn}

In other words, the subgroup $H$ is undistorted in $G$ if the intrinsic metric in $H$ is equivalent to the metric inherited from $G$.
It is easy to check that being undistorted is independent of the choice of generating sets $S$ and $T$.

In what follows, we will be interested in distortion for cyclic subgroups.
We establish below an elementary characterisation that will slightly simplify he argument in that case.
We include a proof for completeness.

\begin{lemma}\label{lemma:EasierDistortionCheck}
Let $G$ be a finitely generated group, $S$ be a finite symmetric generating set and $g\in G$ be an element of infinite order.
Then, $H=\langle g \rangle$ is undistorted in $G$ if and only if there exist constants $C\in \mathbb{R}_{>0}$ and $k\in \N_{>0}$ such that
\[kn\leq C|g^{kn}|_S\]
for all $n\in \N$.
\end{lemma}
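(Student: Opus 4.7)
The plan is to verify both implications using the natural generating set $T = \{g, g^{-1}\}$ for $\langle g \rangle$. Because $g$ has infinite order, $\langle g\rangle \cong \Z$ and $|g^m|_T = |m|$ for every $m\in \Z$, so the undistortedness of $\langle g \rangle$ (which, as the paper notes, does not depend on the chosen generating sets) is equivalent to the existence of a constant $C'$ such that $|m| \leq C' |g^m|_S$ for all $m \in \Z$. Since $S$ is symmetric we have $|g^{-m}|_S = |g^m|_S$, so it suffices to check the inequality for $m \geq 0$.

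The forward direction is then immediate: applying the definition of undistortedness to $g^n$ directly yields $n \leq C|g^n|_S$ for every $n \in \N$, which is the claim with $k = 1$.

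For the converse, given $k$ and $C$ with $kn \leq C|g^{kn}|_S$ for all $n\in \N$, the key step is a Euclidean-division reduction. For any $m \in \N$ write $m = kq + r$ with $0 \leq r < k$, and set $M := \max_{0 \leq r < k}|g^r|_S$, a finite constant depending only on $g$, $S$, and $k$. Since $g^{kq} = g^m g^{-r}$, the triangle inequality gives
\[|g^{kq}|_S \leq |g^m|_S + |g^{-r}|_S \leq |g^m|_S + M.\]
Applying the hypothesis to $g^{kq}$ now yields $kq \leq C |g^m|_S + CM$, and combining with $m \leq kq + k$ produces the bound $m \leq C|g^m|_S + (CM + k)$.

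It only remains to absorb the additive constant into a multiplicative one, and this is where I would use the infinite order of $g$: for $m \neq 0$ we have $g^m \neq 1$, so $|g^m|_S \geq 1$, which lets me rewrite the bound as $m \leq (C + CM + k)|g^m|_S$, while the case $m = 0$ is trivial. The main ``obstacle'' is purely bookkeeping: one must remember to invoke the symmetry of $S$ to reduce to $m \geq 0$, the finiteness of $\{0,\dots,k-1\}$ to ensure $M < \infty$, and the infinite order of $g$ to turn the additive error into a multiplicative one. There is no substantive difficulty beyond these elementary manipulations.
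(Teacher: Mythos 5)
Your proof is correct and follows essentially the same route as the paper's: reduce to $m\geq 0$ via the symmetry of $S$, perform Euclidean division by $k$, apply the hypothesis to the multiple-of-$k$ part together with the triangle inequality, and absorb the resulting additive constant into a multiplicative one using $|g^m|_S\geq 1$ for $m\neq 0$. If anything, your write-up is slightly more careful than the paper's (which contains a small typo in the division step, writing $0\leq r<n$ where $0\leq r<k$ is meant), so no changes are needed.
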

\begin{proof}
It is clear that if $H$ is undistorted, then the inequality holds.
For the converse, let us assume that it holds, let $T=\{g\}$ be our generating set for $H$, and let $h\in H$ be any element.
Then, there exists $m\in \mathbb{Z}$ such that $h=g^{m}$.
First of all, notice that $|g^{m}|_S = |g^{-m}|_S$, so that we can assume without loss of generality that $m\in \N$.
Let us write $m=kn+r$ for some $k,r\in \N$ with $0\leq r<n$.
Then,
\[|g^{m}|_T = m = kn+r \leq C|g^{kn}|_S+r.\]
We have $|g^{kn}|_S \leq |g^{kn+r}|_S + |g^{-r}|_S = |g^{m}|_S + |g^r|_S$,
so that
\[|g^{m}|_T \leq C|g^{m}|_S + C|g^r|_S + r.\]
Since $C|g^r|_S + r$ can be bounded from above by some constant, and since $|g^{m}|_S\geq 1$ unless it is the identity, we conclude that there exists $C'\in \mathbb{R}_{>0}$ such that $|g^m|_T \leq C'|g^m|_S$ for all $m\in \mathbb{Z}$.
\end{proof}

To be able to study distortion in bireversible groups, we will need to make use of the following theorem, whose proof can be extracted from the proof of the main result of \cite{FrancoeurMitrofanov21}.

\begin{thm}[c.f. \cite{FrancoeurMitrofanov21}, Theorem 1.1]\label{thm:TechnicalFrancoeurMitrofanovThm}
Let $\M=(A,Q,\lambda, \rho)$ be a bireversible automaton such that $p_Q(Q^*)= G_\M$ and $p_A(A^*) = \eG_\M$, where $p_Q\colon F_Q\rightarrow G_\M$ and $p_A\colon F_A \rightarrow \eG_\M$ are the canonical quotient maps.
Let $v\in A^*$ be such that $p_A(v)$ is an element of infinite order, and let
\[L = \left\{\Psi_\M(\gamma)(v^n) \in A^* \mid n\in \N, \gamma\in Q^* \right\}\]
be the union of the orbits of $v^n$ under the action of $\Psi_\M(Q^*)$, where $\Psi_\M\colon \pi_1(\M)\rightarrow \Aut(\overrightarrow{T}_A)$ is the homomorphism described in Section \ref{subsec:CommensuratorFree} and $\overrightarrow{T}_A$ is the oriented Cayley graph of $F_A$.
Then, there exist $x_1, x_2 \in L$ such that
\begin{enumerate}
\item the submonoid $\langle x_1, x_2 \rangle_+ \leq A^*$ is free on $\{x_1, x_2\}$ and contained in $L$,
\item the map $p_A \colon F_A \rightarrow \eG_\M$ is injective on $\langle x_1, x_2 \rangle_+$.
\end{enumerate}
\end{thm}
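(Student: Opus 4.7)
The proof follows the strategy of \cite{FrancoeurMitrofanov21}, combining a finite-state pigeonhole argument with a ping-pong construction exploiting the infinite order of $p_A(v)$.

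First, I would observe that the right action of $A^*$ on $Q$ via $\rho$ factors through a finite monoid of self-maps of $Q$. A pigeonhole argument therefore yields $m_0 \in \N_{>0}$ such that reading $w := v^{m_0}$ induces the identity on this monoid, giving the key periodicity identity
\[
\Psi_\M(\gamma)(w^k) = \bigl(\Psi_\M(\gamma)(w)\bigr)^k
\]
for every $\gamma \in Q^*$ and $k \in \N$. Setting $u_\gamma := \Psi_\M(\gamma)(w) \in A^{m_0|v|}$, all powers $u_\gamma^k$ thus lie in $L$, and the infinite word $\Psi_\M(\gamma)(w^\infty)$ is purely periodic with period $u_\gamma$.

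Next, I would select two states $\gamma_1, \gamma_2 \in Q^*$ so that the words $x_i := u_{\gamma_i}$ satisfy three properties: (i) $x_1$ and $x_2$ are not both powers of a common word in $A^*$, so that $\langle x_1, x_2 \rangle_+$ is free of rank two; (ii) $p_A(x_1)$ and $p_A(x_2)$ generate a free subsemigroup of $\eG_\M$; and (iii) the state trajectories of $\gamma_1$ and $\gamma_2$ under successive $w$-reading are compatible, so that any concatenation $x_{i_1}\cdots x_{i_k}$ may be realized as $\Psi_\M(\delta)(w^k)$ for some $\delta \in Q^*$, guaranteeing closure in $L$. The freeness in $\eG_\M$ in property (ii) would be established by a ping-pong argument on the dual tree action, exploiting bireversibility to obtain the necessary two-sided rigidity.

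The main obstacle lies in arranging (i)--(iii) simultaneously. Separately, each is easy, but (iii) strongly constrains the admissible pairs, while (ii) requires $p_A$ to separate $x_1$ and $x_2$ enough to produce free dynamics. The resolution, as in \cite{FrancoeurMitrofanov21}, is to enlarge the pigeonhole by replacing $w$ with a suitably larger common power, stabilizing all state-trajectory data of interest, and then to use the infinite order of $p_A(v)$ to locate within this stabilized regime two distinct transforms whose images in $\eG_\M$ remain inequivalent even after arbitrary concatenation. This is the most delicate part of the argument, as it is precisely where the interplay between bireversibility (providing the structural symmetries) and the infinite order of $p_A(v)$ (providing the non-degeneracy) must be leveraged together.
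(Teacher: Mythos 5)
Your central pigeonhole step is incorrect, and it sits exactly where the real difficulty of the theorem is hidden. By reversibility the letters of $A$ do act on $Q$ by bijections, so some power $w=v^{m_0}$ induces the identity permutation of the finite set $Q$; that much is fine. But the identity $\Psi_\M(\gamma)(w^k)=\bigl(\Psi_\M(\gamma)(w)\bigr)^k$ requires $w$ to fix the whole state word $\gamma$ under the dual action on $Q^{|\gamma|}$, not just the single states: in the paper's conventions, when a letter $a$ is read by a state word $vq$, the last state becomes $\rho(q,a)$ while the prefix $v$ is acted on by the \emph{output} letter $\lambda(q,a)$, so after $q$ processes $w$ the prefix is transformed by the output word $\lambda_q(w)$, which in general differs from $w$; fixing singletons gives no control over it. For each fixed $n$ there is indeed a power of $v$ acting trivially on $Q^n$, but its order grows with $n$, and no single $m_0$ can work for all $\gamma\in Q^*$ simultaneously: having $v^{m_0}$ fix every state word means the dual action of $v^{m_0}$ on $Q^*$ is trivial, i.e.\ $p_A(v)^{m_0}=1$ in $\eG_\M$, contradicting the hypothesis that $p_A(v)$ has infinite order. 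So the ``key periodicity identity'' is not merely unproven; the uniform version you state cannot hold, and with it the construction of the candidate generators $u_\gamma$ collapses.

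Independently of this, your items (ii) and (iii) --- closure of arbitrary mixed concatenations in $L$, and injectivity of $p_A$ on the submonoid --- are the entire content of the theorem, and the proposal defers both to an unspecified ``ping-pong on the dual tree'' and an unspecified enlarged pigeonhole; you even flag this yourself as the delicate part, but no mechanism is given. The paper's proof takes a genuinely different route: it extracts the statement from the proof of Proposition 4.1 of \cite{FrancoeurMitrofanov21}, where a finite-state automaton $\Gamma$ recognizing $L$ is constructed, with all states accepting, and is shown to possess a strongly connected component $R$ all of whose outgoing edges lead back into $R$. Submonoids of $A^*$ contained in $L$ then arise from pairs of cycles of $\Gamma$ based at a common vertex of $R$ --- this is what secures your (iii), with no periodic words of the form $\Psi_\M(\gamma)(w)^k$ involved. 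Freeness and injectivity of $p_A$ are then obtained \emph{by contradiction}: one assumes that for every pair $x_1,x_2\in L$ with $\langle x_1,x_2\rangle_+\subseteq L$ the images $p_A(x_1),p_A(x_2)$ do not generate a free submonoid, observes that the corresponding hypothesis of \cite{FrancoeurMitrofanov21} enters only in Lemma 4.3 there, and reruns the lemmas unchanged to reach a contradiction. Nothing resembling a ping-pong argument appears in that proof, and you supply no ping-pong data (no pair of sets, no dynamics for $p_A(x_i)$), so this step in your proposal is a gap rather than a compressed proof.
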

\begin{proof}
The proof can be extracted from the proof of Proposition 4.1 of \cite{FrancoeurMitrofanov21}.
Indeed, in Section 4.2 of \cite{FrancoeurMitrofanov21}, a finite-state automaton (called there $\Gamma$) is constructed that can recognise the language $L$, with all states accepting.
By Lemma 3.1 of \cite{FrancoeurMitrofanov21}, if any other vertex of the graph $\Gamma$ is used as a starting state, the language accepted by this new finite-state automaton is a sub-language of $L$.
Then, in Section 4.2 of \cite{FrancoeurMitrofanov21}, it is established that $\Gamma$ possesses a strongly connected component $R$ such that all outgoing edges from any vertex in $R$ lead back into $R$.
Combined together, these facts yield several submonoids of $A^*$ contained in $L$, namely those generated by cycles based at elements of $R$.
From there, the following lemmas in \cite{FrancoeurMitrofanov21} derive a contradiction to the assumption that there exist no free two-generated submonoid of $\eG_\M$.
However, this assumption is only used in Lemma 4.3 of \cite{FrancoeurMitrofanov21}, and what is truly used in the proof of that lemma is the assumption that if $x_1, x_2 \in L$ are two elements such that $\langle x_1, x_2 \rangle_+ \leq L$, then $\langle p_A(x_1), p_A(x_2) \rangle_+$ is not a free submonoid on $\{p_A(x_1), p_A(x_2)\}$.
Thus, using the exact same proof as in \cite{FrancoeurMitrofanov21}, we derive a contradiction to that assumption, which implies that there exist $x_1, x_2 \in L$ such that $\langle x_1, x_2 \rangle_+ \leq L$ and $\langle p_A(x_1), p_A(x_2) \rangle_+$ is free on $\{p_A(x_1), p_A(x_2)\}$.
It follows that $\langle x_1, x_2 \rangle_+$ is also free on $\{x_1,x_2\}$ and that the map $p_A$ is injective on this submonoid.
\end{proof}

Using this, we can study distortion for cyclic subgroups of bireversible groups.

\begin{thm}\label{thm:UndistortedCyclicSubgroups}
Every cyclic subgroup of a bireversible group is undistorted.
\end{thm}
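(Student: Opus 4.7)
The plan is to use Theorem \ref{thm:TechnicalFrancoeurMitrofanovThm} in tandem with Lemma \ref{lemma:EasierDistortionCheck}. Since torsion elements generate finite (automatically undistorted) cyclic subgroups, assume $g$ has infinite order. By Proposition \ref{prop:MonoidGenerates} we may arrange $G\cong G_\M$ for a bireversible automaton $\M=(A,Q,\lambda,\rho)$ with $p_Q(Q^*)=G_\M$ and $p_A(A^*)=\eG_\M$, and by Proposition \ref{prop:DualIsBireversible} we pass to the dual $\overline{\M}$, for which $\eG_{\overline{\M}}=G_\M$. Choose $v\in Q^*$ with $p_Q(v)=g$. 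Applying Theorem \ref{thm:TechnicalFrancoeurMitrofanovThm} to $\overline{\M}$ with $v$ yields $x_1,x_2\in L$, positive integers $n_1,n_2$, and $\gamma_i\in A^*$ with $x_i=\Psi_{\overline{\M}}(\gamma_i)(v^{n_i})$ such that $\langle x_1,x_2\rangle_+$ is free on $\{x_1,x_2\}$, contained in $L$, and $p_Q$ is injective on it.

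The key structural input comes from reading $x_i\in L$ inside $\pi_1(\M)=\pi_1(\overline{\M})$: there exists $\delta_i\in A^*$ with $\gamma_iv^{n_i}=x_i\delta_i$, and since the defining relations $qa=\lambda(q,a)\rho(q,a)$ preserve both the $A$-count and the $Q$-count, $|\gamma_i|_A=|\delta_i|_A$. Inside $\pi_1(\M)/N_Q$, which contains both $G_\M=F_Q/N_Q$ and $F_A$ with the Zappa--Szép decomposition of Theorem \ref{thm:AlternativeDefBireversibleGroup}\ref{item:UniqueProduct}, this rewrites as $p_Q(x_i)=\gamma_ig^{n_i}\delta_i^{-1}$. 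Iterating and using that $\langle x_1,x_2\rangle_+\subseteq L$ forces the $F_Q$-part of the Zappa--Szép form of each product to be a true power of $v$, every $w=x_{i_1}\cdots x_{i_N}\in\langle x_1,x_2\rangle_+$ satisfies
\[
p_Q(w)=\gamma_w\,g^{M_w}\,\delta_w^{-1},\qquad M_w=\sum_j n_{i_j},\qquad |\gamma_w|_A=|\delta_w|_A.
\]

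By Lemma \ref{lemma:EasierDistortionCheck} it is enough to find $C,k$ with $kn\leq C|g^{kn}|_Q$ for every $n$. Take $k=n_1+n_2$ and consider the $\binom{2n}{n}$ distinct $w\in\langle x_1,x_2\rangle_+$ containing $n$ copies of each $x_i$; all satisfy $M_w=kn$ and, by injectivity of $p_Q$, give pairwise distinct elements of $G_\M$. Using that for $h\in G_\M$ the word length $|h|_Q$ coincides with the word length of $h$ in $\pi_1(\M)/N_Q$ with respect to $Q\cup A$ (any $Q\cup A$-word representing $h$ rewrites via the length-preserving bireversible relations to an $A$-then-$Q$ normal form whose $A$-part must reduce to $1$), the identity gives $|p_Q(w)|_Q\leq |g^{kn}|_Q+2|\gamma_w|_A$. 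A ball-counting argument in $G_\M$ (the ball of radius $R$ has at most $(2|Q|)^R$ elements) then forces $|p_Q(w)|_Q\geq cn$ for at least one such $w$, whence $|g^{kn}|_Q\geq cn-2|\gamma_w|_A$.

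The main obstacle is the last step: controlling $|\gamma_w|_A$ well enough that $cn-2|\gamma_w|_A$ stays linear in $n$. The iterative $\pi_1(\M)$-rewriting for $|\gamma_w|_A$ yields only an a priori exponential bound, but the orbit closure $\langle x_1,x_2\rangle_+\subseteq L$ together with the conservation $|\gamma_w|_A=|\delta_w|_A$ should force enough cancellation to give a sublinear (indeed, ideally uniformly bounded in terms of $\max(|\gamma_1|_A,|\gamma_2|_A)$) estimate for $|\gamma_w|_A$. Establishing this $F_A$-length control, which is precisely where the full strength of bireversibility (beyond just the freeness of the submonoid) enters, is the technical heart of the argument; once in place, Lemma \ref{lemma:EasierDistortionCheck} immediately delivers the undistortion.
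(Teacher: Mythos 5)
Your proposal follows the paper's skeleton (Proposition \ref{prop:DualIsBireversible} and Proposition \ref{prop:MonoidGenerates} to normalise the automaton, Theorem \ref{thm:TechnicalFrancoeurMitrofanovThm} to produce the free submonoid, ball counting, Lemma \ref{lemma:EasierDistortionCheck}), but, as you yourself flag in the last paragraph, the final step is a genuine gap and not a deferred technicality. Writing $p_Q(w)=\gamma_w\,g^{M_w}\,\delta_w^{-1}$ and applying the triangle inequality saddles you with the additive error $2|\gamma_w|_A$, and nothing in your argument bounds it: in the definition of $L$ the witness $\gamma_w$ ranges over \emph{all} of $A^*$, the defining relations of $\pi_1(\M)$ only conserve letter counts, and the symmetric identity $|\gamma_w|_A=|\delta_w|_A$ carries no decay information whatsoever. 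There is no visible mechanism forcing the ``cancellation'' you hope for, and a priori $|\gamma_w|_A$ can grow with the length of $w$, destroying the linear lower bound. As written, the proof is incomplete exactly where you say it is.

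The missing idea is to stop treating $\gamma_w$ as a group element entering a length estimate, and instead use it as an isometry of the Cayley graph fixing the basepoint. In your orientation, $\ker p_Q=N_Q$ is the largest normal subgroup of $\pi_1(\M)$ contained in $F_Q$, hence is normal in all of $\pi_1(\M)$ by Theorem \ref{thm:AlternativeDefBireversibleGroup} \ref{item:MaxNormal}; therefore $\Psi_{\overline{\M}}(\gamma_w)$ normalises $\ker p_Q$ and, by Proposition \ref{prop:NormaliserPassesToQuotient}, descends to an automorphism $\zeta_{p_Q}\bigl(\Psi_{\overline{\M}}(\gamma_w)\bigr)$ of the graph $X=\ker(p_Q)\backslash\overrightarrow{T}_Q$ that \emph{fixes the identity vertex}. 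An automorphism fixing a vertex preserves distance to that vertex, and by monoid generation (Proposition \ref{prop:MonoidGenerates}) graph distance to the identity in $X$ is exactly word length over $S=p_Q(Q)$. Hence $|p_Q(w)|_S=|g^{kn}|_S$ \emph{exactly}, and the term $2|\gamma_w|_A$ never arises --- this is where bireversibility genuinely enters, through the normality of $N_Q$ in $\pi_1(\M)$, not through any length control on witnesses. With this equality in hand, your counting of $\binom{2n}{n}$ distinct images inside the ball of radius $|g^{kn}|_S$ (the paper instead normalises $x_1,x_2$ to have equal length by replacing them with $x_1x_2$ and $x_2x_1$, so that all $2^n$ products of $n$ generators land at distance exactly $|g^{k_1n}|_S$) immediately gives $kn\le C|g^{kn}|_S$, and Lemma \ref{lemma:EasierDistortionCheck} finishes the proof.
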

\begin{proof}
Let $G$ be a bireversible group and let $\M=(A,Q,\lambda, \rho)$ be a bireversible automaton such that $p_Q(Q^*)= G_\M$ and $p_A(A^*) = \eG_\M \cong G$, where $p_Q\colon F_Q\rightarrow G_\M$ and $p_A\colon F_A \rightarrow \eG_\M$ are the canonical quotient maps. 
Note that such an automaton exists by Propositions \ref{prop:DualIsBireversible} and \ref{prop:MonoidGenerates}.
We need to show that every cyclic subgroup of $\eG_\M$ is undistorted.
We will fix $S=p_A(A)$ as our finite generating set for $\eG_\M$.

Let $g\in \eG_\M$ be any element.
If $g$ is of finite order, then the subgroup $\langle g \rangle\leq \eG_\M$ is finite and thus obviously undistorted, so we may assume that $g$ is of infinite order.
In that case, let $v \in A^*$ be such that $p_A(v)=g$, and let
\[L = \left\{\Psi_\M(\gamma)(v^n) \in A^* \mid n\in \N, \gamma\in Q^* \right\},\]
where $\Psi_\M\colon \pi_1(\M)\rightarrow \Aut(\overrightarrow{T}_A)$ is the homomorphism described in Section \ref{subsec:CommensuratorFree} and $\overrightarrow{T}_A$ is the oriented Cayley graph of $F_A$.
By Theorem \ref{thm:TechnicalFrancoeurMitrofanovThm}, there exist $x_1, x_2 \in L$ such that $\langle x_1, x_2\rangle_+$ is a free submonoid of $A^*$ contained in $L$, and such that the map $p_A$ is injective when restricted to it.
Without loss of generality, we may assume that $x_1$ and $x_2$ are words of the same length.
Indeed, if that is not the case, then one can simply consider instead $x_1x_2$ and $x_2x_1$, which are words of the same length that will still generate a free submonoid.

Since $\langle x_1, x_2 \rangle_+\subseteq L$, there must exist $\gamma_1, \gamma_2 \in Q^*$ and $k_1, k_2\in \N$ such that $x_i = \Psi_\M(\gamma_i)(v^{k_i})$.
Notice that since $\Psi_\M(\gamma_i)$ are automorphisms of $\overrightarrow{T}_A$ fixing the identity and since $x_1$ and $x_2$ are at the same distance from the identity by assumption, we must in fact have $k_1=k_2$.
Similarly, if $x=x_{i_1}x_{i_2}\dots x_{i_n}$ is an element of $\langle x_1, x_2 \rangle_+\subseteq L$ that can be written as a product of $n$ generators (for some $n\in \N$), then there must exist some $\gamma_x \in Q^*$ and $k_x$ such that $x=\Psi_\M(\gamma_x)(v^{k_x})$.
Note that since $x_1$ and $x_2$ lie in the free semigroup $A^*$, the length of $x$ must be exactly $k_1|v|n$, and since the action of $\Psi_\M(\gamma_x)$ is by isometries fixing the identity, we must have $k_x = k_1n$.
By Theorem \ref{thm:AlternativeDefBireversibleGroup} \ref{item:MaxNormal}, $\ker(p_A)$ is normal in $\pi_1(\M)$, so that $\Psi_\M(F_Q)$ lies in $N_{\Aut(\overrightarrow{T}_A)}(\ker(p_A))$, the normaliser of $\ker(p_A)$ in $\Aut(\overrightarrow{T}_A)$.
Therefore, if we denote by $\zeta_{p_A}\colon N_{\Aut(\overrightarrow{T}_A)}(\ker(p_A))\rightarrow \Aut(\ker(p_A)\backslash \overrightarrow{T}_A)$ the homomorphism of Proposition \ref{prop:NormaliserPassesToQuotient}, we have
\[p_A(x)=p_A(\Psi_\M(\gamma_x)(v^{k_1n})) = \zeta_{p_A}\circ\Psi_\M(\gamma_x)(p_A(v^{k_1n})) = \zeta_{p_A}\circ\Psi_\M(\gamma_x)(g^{k_1n}).\]
Thus, $p_A(x)$ is the image of $g^{k_1n}$ by an automorphism of $\ker(p_A)\backslash \overrightarrow{T}_A$ fixing the identity, and thus these two elements are at the same distance from the identity in this graph.
Note that the distance between a vertex $\kappa\in \eG_\M$ and the identity in the graph $\ker(p_A)\backslash\overrightarrow{T}_A$ is exactly $|\kappa|_S$, where $|\cdot|_S$ denotes the word metric in $\eG_\M$ with respect to the generating set $S$.
Therefore, we have just shown that if $x$ is an element of length $n$ in the semigroup $\langle x_1, x_2 \rangle_+$, then $|p_A(x)|_S = |g^{k_1n}|_S$.



For $n\in \N$, let us denote by $B_S(n) = \{h\in \eG_\M \mid |h|_S\leq n\}$ the ball of radius $n$ in $\eG_\M$, and by $F(n) = \{x_{i_1}\dots x_{i_n} \mid i_j\in \{1,2\} \} \subseteq \langle x_1, x_2\rangle_+ \leq A^*$ the subset of all elements of $\langle x_1, x_2 \rangle_+$ that can be written as a product of exactly $n$ generators.
Since $\langle x_1, x_2 \rangle_+$ is a free monoid on $\{x_1, x_2\}$, we know that $|F(n)|=2^n$.
Since $p_A$ is injective on $\langle x_1, x_2 \rangle_+$, and since $p_A(x) \in B_S(|g^{k_1n}|_S)$ for all $x\in F(n)$ by the above remark, we have $2^n \leq |B_S(|g^{k_1n}|_S)|$.
On the other hand, since $\eG_\M$ is generated by $S$, we must have $|B_S(|g^{k_1n}|_S)|\leq (2|S|)^{|g^{k_1n}|_S}$, from which we deduce that $2^n \leq (2|S|)^{|g^{k_1n}|_S}$.
The logarithm being an increasing function, we find
\[n \leq \frac{\ln(2|S|)}{\ln(2)}|g^{k_1n}|_S,\]
so that
\[k_1n \leq \frac{\ln(2|S|)}{k_1\ln(2)}|g^{k_1n}|_S.\]
We conclude that $\langle g \rangle$ is undistorted in $\eG_\M$ by Lemma \ref{lemma:EasierDistortionCheck}.
\end{proof}

The previous theorem tells us that any group that contains a distorted cyclic subgroup cannot be generated by a bireversible automaton.
By combining this with other results, we can use this to show that the set of groups generated by bireversible automaton is strictly smaller than the set of groups generated by invertible and reversible automaton.

\begin{cor}\label{cor:InvRevNotBir}
Let $\mathbf{Bir}$ be the set of groups generated by bireversible automata and let $\mathbf{InvRev}$ be the set of groups generated by invertible and reversible automata.
Then, $\mathbf{Bir}\subsetneq \mathbf{InvRev}$.
\end{cor}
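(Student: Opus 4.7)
The inclusion $\mathbf{Bir} \subseteq \mathbf{InvRev}$ is immediate from Definition \ref{defn:MealyAutomaton}, since every bireversible automaton is by definition both invertible and reversible. So the entire content of the corollary is the strict containment, and the plan is to exhibit a single witness, that is, a group $G$ lying in $\mathbf{InvRev} \setminus \mathbf{Bir}$.

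The key leverage is Theorem \ref{thm:UndistortedCyclicSubgroups}: any group lying in $\mathbf{Bir}$ has only undistorted cyclic subgroups. Hence it suffices to produce one group $G$ such that
\begin{enumerate}
\item $G$ is generated by some invertible and reversible Mealy automaton, and
\item $G$ contains at least one distorted cyclic subgroup.
\end{enumerate}
Given such a $G$, item (1) puts $G$ in $\mathbf{InvRev}$ by definition, while item (2) combined with Theorem \ref{thm:UndistortedCyclicSubgroups} forces $G \notin \mathbf{Bir}$, completing the proof.

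The natural pool of candidates for (2) consists of infinite virtually nilpotent groups which are not virtually abelian (for example the discrete Heisenberg group $H_3(\mathbb{Z})$, whose center is cyclic and quadratically distorted), and of solvable Baumslag–Solitar groups $BS(1,n)$ for $n \geq 2$ (in which the cyclic subgroup $\langle a \rangle$ is exponentially distorted via the relation $t a t^{-1} = a^n$). For both of these families, the distortion of the relevant cyclic subgroup is classical and needs no further argument. Note also that for virtually nilpotent examples, non-birevisibility is already known by Klimann \cite{Klimann18} and is recovered here by item \ref{item:virtuallyNilpotent} of Corollary \ref{cor:ListOfGroups}; what is new is that Theorem \ref{thm:UndistortedCyclicSubgroups} makes the obstruction quantitative and thus applicable beyond the virtually nilpotent setting.

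The main obstacle is step (1): exhibiting an explicit invertible and reversible Mealy automaton whose associated group is one of the candidates above. My plan is to invoke or directly adapt an existing construction in the literature realising such a group as an invertible, reversible (but not bireversible) automaton group; reversibility of a specific automaton can be verified directly by inspecting the transition map $\rho_a \colon Q \to Q$ for each letter $a \in A$ and checking that it is a bijection. Once the automaton is in place and shown to be invertible and reversible, no further computation is required: the strict containment follows at once by combining the two items above with Theorem \ref{thm:UndistortedCyclicSubgroups}.
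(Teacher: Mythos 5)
Your proposal is correct and follows essentially the same route as the paper: the paper likewise proves the strict containment by combining Theorem \ref{thm:UndistortedCyclicSubgroups} with the fact that the solvable Baumslag--Solitar groups $BS(1,m)$, $m\geq 2$, which have exponentially distorted cyclic subgroups, lie in $\mathbf{InvRev}$. The literature result you would need to invoke for your step (1) is exactly \cite{BartholdiSunic06}, Proposition 5, which realises $BS(1,m)$ as a group generated by an invertible and reversible automaton (your alternative Heisenberg candidate is the riskier choice here, since no such automaton for it is readily available in the literature).
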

\begin{proof}
As every bireversible automaton is invertible and reversible by definition, we have $\mathbf{Bir}\subseteq \mathbf{InvRev}$.
However, by \cite{BartholdiSunic06} Proposition 5, the Baumslag-Solitar groups $BS(1,m)$ belong to $\mathbf{InvRev}$ for all $m\geq 2$.
It is well-known that these groups have distorted cyclic subgroups.
Therefore, they cannot belong to $\mathbf{Bir}$ by Theorem \ref{thm:UndistortedCyclicSubgroups}.
\end{proof}

Combining Theorem \ref{thm:UndistortedCyclicSubgroups} with Theorem \ref{thm:GeneralisedMNS}, we can conclude that for any finitely generated group $G$ and any Cayley graph $X$ of this group, cyclic subgroups in $\Comm_{\Aut(X)}(G)\cap \Aut_1(X)$ are undistorted.
However, for this to make sense, we need to define a version of distortion that makes sense for groups that are not finitely generated.

Notice that if $H\leq K \leq G$ are all finitely generated groups, then $H$ being undistorted in $G$ implies that $H$ is undistorted in $K$.
Thus, if a group $G$ is not finitely generated, it is natural to say that a finitely generated subgroup $H\leq G$ is undistorted in $G$ if for all finitely generated subgroup $K$ of $G$ containing $H$, $H$ is undistorted in $K$.


\begin{cor}\label{cor:UndistortedCyclicInCommensurator}
Let $G$ be a finitely generated group and let $X$ be a Cayley graph for $G$.
Let us denote by
\[\C{G} = \Comm_{\Aut(X)}(G)\cap \Aut_1(X)\]
the subgroup of automorphisms of $X$ fixing the identity and commensurating $G$.
Then, every cyclic subgroup of $\C{G}$ is undistorted.
\end{cor}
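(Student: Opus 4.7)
\emph{Plan.} The strategy is to deduce this from Theorem \ref{thm:UndistortedCyclicSubgroups} by showing that any finitely generated subgroup $K \leq \C{G}$ containing a given cyclic subgroup actually embeds into an honest bireversible group. Fix a finite symmetric generating set for $G$ yielding a marking $\varphi\colon F_A \twoheadrightarrow G$ with $X \cong \ker(\varphi)\backslash\overrightarrow{T}_A$, take $g \in \C{G}$ (which we may assume to be of infinite order, since otherwise $\langle g \rangle$ is finite and trivially undistorted), and let $K \leq \C{G}$ be any finitely generated subgroup containing $\langle g \rangle$. By the definition of distortion for subgroups of groups that need not be finitely generated, it suffices to show that $\langle g \rangle$ is undistorted in such a $K$.

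Pick a finite generating set $k_1,\dots,k_n$ for $K$. By Theorem \ref{thm:GeneralisedMNS}, each $k_i$ is a bireversible automorphism of $X$, so there exist bireversible automata $\M_i$ compatible with $(G,A,\varphi)$ and elements $\hat{k}_i \in G_{\M_i}$ with $k_i=\zeta_\varphi(\psi_{\M_i}(\hat{k}_i))$. I would then form the disjoint union $\M:=\bigsqcup_{i=1}^n \M_i$; by Proposition \ref{prop:UnionOfAutomata} the automaton $\M$ is bireversible, each $G_{\M_i}$ sits as a subgroup of $G_\M$, and since $\ker(\varphi)$ is normal in every $\pi_1(\M_i)$ the compatibility with $(G,A,\varphi)$ is preserved. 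The composite $\zeta_\varphi\circ\psi_\M\colon G_\M\to\Aut_1(X)$ is therefore well-defined, takes values in $\C{G}$, and contains every $k_i$ in its image, hence contains $K$. Moreover $\psi_\M$ is injective (as recalled in Section \ref{subsec:CommensuratorFree}) and so is the restriction of $\zeta_\varphi$ to $\Aut_1(\overrightarrow{T}_A)$ (by Proposition \ref{prop:NormaliserPassesToQuotient}), so $\zeta_\varphi\circ\psi_\M$ embeds $G_\M$ into $\C{G}$, with image containing $K$.

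Identifying $G_\M$ with its image in $\C{G}$, I obtain a chain $\langle g \rangle \leq K \leq G_\M$ of finitely generated groups. Theorem \ref{thm:UndistortedCyclicSubgroups} gives that $\langle g \rangle$ is undistorted in $G_\M$, and a standard elementary observation shows the undistortion is inherited by any intermediate subgroup: fixing a finite generating set $U$ of $G_\M$ and a finite generating set $V$ of $K$, each element of $V$ has bounded length in $U$, so $|k|_U\leq M|k|_V$ for all $k\in K$, and combining this with $|g^m|_{\langle g \rangle}\leq C|g^m|_U$ yields $|g^m|_{\langle g \rangle}\leq CM|g^m|_V$, as required. The only real obstacle is the compatibility check when forming the disjoint union, which is exactly what the final clause of Proposition \ref{prop:UnionOfAutomata} provides; beyond that, the argument is purely bookkeeping with the maps $\psi_\M$ and $\zeta_\varphi$ assembled in the preliminaries.
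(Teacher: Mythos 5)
Your proposal is correct and follows essentially the same route as the paper: the paper's proof also reduces to Theorem \ref{thm:UndistortedCyclicSubgroups} by embedding a finitely generated subgroup of $\C{G}$ into a single bireversible group, citing ``the proof of Theorem \ref{thm:GroupsWithBigCommensurators}'' for exactly the disjoint-union construction (Proposition \ref{prop:UnionOfAutomata}) that you carry out explicitly, and then uses the same intermediate-subgroup observation. The only cosmetic difference is that you land in $G_\M$ while the paper phrases the embedding via $\eG_\M$, which is equivalent by Proposition \ref{prop:DualIsBireversible}.
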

\begin{proof}
Let $S\subseteq G$ be a finite generating set and let $X=\Cay(G,S)$ be the associated Cayley graph.
Let $A=S\cup S^{-1}$, and let $\varphi\colon F_A \rightarrow G$ be the canonical quotient map.
Then, $X = \ker(\varphi)\backslash\overrightarrow{T}_A$, where $\overrightarrow{T}_A$ denotes the oriented Cayley graph of $F_A$.
Therefore, by Theorem \ref{thm:GeneralisedMNS}, $\C{G}=B_X$, where $B_X$ denotes the subgroup of all bireversible automorphisms of $X$.

Let $g\in B_X$ be any element, and let $H\leq B_X$ be a finitely generated subgroup containing $g$.
We need to show that $\langle g \rangle$ is undistorted in $H$.
It follows from the proof of Theorem \ref{thm:GroupsWithBigCommensurators} that there exists a bireversible automaton $\M=(A,Q,\lambda, \rho)$ compatible with $(G,A,\varphi)$ such that $H$ is isomorphic to a subgroup of $\eG_\M$.
By Theorem \ref{thm:UndistortedCyclicSubgroups}, $\langle g \rangle$ is undistorted in $\eG_\M$, which means that it is undistorted in $H$.
\end{proof}

\bibliographystyle{plain}
\bibliography{biblio}

\end{document}